\newtheorem{theorem}{Theorem}[section]
\newtheorem{lemma}[theorem]{Lemma}
\newtheorem{proposition}[theorem]{Proposition}
\newtheorem{corollary}[theorem]{Corollary}
\newtheorem*{theorema}{Theorem A}
\newtheorem*{theoremb}{Theorem B}
\newtheorem*{hypothesis*}{Hypothesis}
\newtheorem*{hypothesisone}{Hypothesis 1}
\newtheorem*{lemmaa}{Lemma A}
\newtheorem*{lemmab}{Lemma B}
\newtheorem{hypothesis}{Hypothesis}
\newtheorem*{ihypothesis}{Inductive Hypothesis}
\newcommand{\Galph}{G_\alpha}
\newcommand{\curlyc}[1]{\mathcal{C}_#1}
\newcommand{\spaceP}{\mathcal{P}}
\newcommand{\Fix}{{\rm Fix}}
\newcommand{\curly}{\mathcal{C}}
\newcommand{\Fiel}{\mathbb{F}}
\newcommand{\Fq}{\mathbb{F}_q}
\newcommand{\Fqr}{\mathbb{F}_{q^r}}
\newcommand{\GL}{\mathrm{GL}}
\newcommand{\Aut}{\mathrm{Aut}}
\newcommand{\PGL}{\mathrm{PGL}}
\newcommand{\SL}{\mathrm{SL}}
\newcommand{\PSL}{\mathrm{PSL}}
\newcommand{\SU}{\mathrm{SU}}
\newcommand{\PSU}{\mathrm{PSU}}
\newcommand{\GO}{\mathrm{GO}}
\newcommand{\POmega}{\mathrm{P\Omega}}
\newcommand{\Sp}{\mathrm{Sp}}
\newcommand{\PSp}{\mathrm{PSp}}
\begin{document}

\title{Transitive projective planes and insoluble groups}
\author{Nick Gill}

\address{Department of Mathematics, The Open University, Walton Hall, Milton Keynes, MK7 6AA, UK}
\email{n.gill@open.ac.uk}

\begin{abstract}
Suppose that a group $G$ acts transitively on the points of $\mathcal{P}$, a finite non-Desarguesian projective plane. We prove that if $G$ is insoluble then $G/O(G)$ is isomorphic to $\SL_2(5)$ or $\SL_2(5).2$.

{\it MSC(2000):} 20B25, 51A35.
\end{abstract}

\maketitle

\section{Introduction}

In 1959, Ostrom and Wagner proved that if a finite projective plane, $\spaceP$, admits an automorphism group which acts 2-transitively on the set of points of $\spaceP$, then $\spaceP$ is Desarguesian \cite{oswa}. Ever since this result appeared it has been conjectured that the same conclusion holds if the phrase {\it 2-transitively} is replaced by {\it transitively}.

A number of results have appeared which partially prove this conjecture under certain extra conditions. Most notably, in 1987 Kantor made use of the Classification of Finite Simple Groups to prove that if $\spaceP$ has order $x$ and $\spaceP$ admits a group $G$ which acts {\it primitively} on the set of points of $\spaceP$, then either $\spaceP$ is Desarguesian and $G\geq \PSL(3,x)$, or else $x^2+x+1$ is a prime and $G$ is a regular or Frobenius group of order dividing $(x^2+x+1)(x+1)$ or $(x^2+x+1)x$ \cite{kantor}.

In the 25 years since Kantor's result appeared, a number of authors have studied what remains, with particular attention being given to the special case where $G$ acts {\it flag-transitively} and so has order $(x^2+x+1)(x+1)$. Even in this special case, though, the number theory involved is very delicate and the question of whether a flag-transitive non-Desarguesian finite projective plane exists is still wide open (see, for instance, \cite{thas, thaszagier}).

In this paper we present two main results which depend only on the supposition that a group acts transitively on the set of points of a non-Desarguesian plane. These results build on those given in \cite{gill2} and constitute the closest approach to a proof of the conjecture so far. Indeed, in a sense that will be made clear below, the results presented here do for point-transitive finite projective planes what Kantor's result did for point-primitive finite projective planes. In light of the difficulties encountered in trying to strengthen Kantor's original result, it is fair to say that the results presented here are the best one could hope for without significant developments in the associated number theory.

In order to state the two results we make some definitions pertaining to a group $G$. We define $O(G)$ to be the largest odd-order normal subgroup in $G$ and $F(G)$ to be the Fitting subgroup of $G$.

\begin{theorema}
Suppose that a group $G$ acts transitively on the set of points of $\spaceP,$ a finite non-Desarguesian projective plane. Then the Sylow $2$-subgroups of $G$ are cyclic or generalized quaternion.
\end{theorema}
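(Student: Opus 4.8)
The statement $m_2(G)\le 1$ is equivalent to the assertion that $G$ contains no subgroup isomorphic to the Klein four-group $C_2\times C_2$ (equivalently, that a Sylow $2$-subgroup of $G$ is cyclic or generalized quaternion). So the plan is to assume for contradiction that $V=\langle a,b\rangle\cong C_2\times C_2\le G$, with third involution $c=ab$, and to derive a contradiction with the hypothesis that $\spaceP$ is non-Desarguesian.

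First I would record two structural facts. Writing $n$ for the order of $\spaceP$, the point set has size $n^2+n+1$, which is odd; hence a Sylow $2$-subgroup of $G$ lies in a point stabiliser and every $2$-element fixes a point. Secondly, since the point--line incidence matrix $A$ satisfies $AA^{T}=nI+J$ and is therefore nonsingular over $\mathbb Q$, the relation $P_gA=AL_g$ between the point- and line-permutation matrices of a collineation $g$ gives $L_g=A^{-1}P_gA$, so $\operatorname{tr}L_g=\operatorname{tr}P_g$ for every $g$; by Burnside's lemma the number of $G$-orbits on points equals the number on lines, and hence $G$, being transitive on points, is transitive on lines. I would then invoke the classification of involutory collineations: an involution is either a Baer involution (fixing a Baer subplane, so $n$ is a square), a homology (centre $A\notin$ axis $\alpha$, forcing $n$ odd) or an elation (centre $A\in$ axis $\alpha$, forcing $n$ even). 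Since $n$ is fixed, no plane carries both a homology and an elation, which already restricts how the three types may combine in $V$.

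The core of the argument is the analysis of the mutually commuting triple $a,b,c$. Because $a$ centralises $b$ it fixes both the centre and the axis of $b$, and symmetrically; feeding this into the description of $\Fix$ for each type pins the configuration down to a short list. For commuting homologies the three centres $A,B,C$ form a triangle whose opposite sides are the three axes (a self-polar triangle); for commuting elations the three share a common axis with distinct collinear centres; the Baer and mixed perspectivity/Baer cases are treated analogously. The crucial observation is that each such configuration already occurs in the Desarguesian plane, so a purely combinatorial incompatibility is impossible and the transitivity of $G$ must be used. Applying point- and line-transitivity to a single involution of $V$ shows that \emph{every} point is the centre, and every line the axis, of a conjugate involutory perspectivity; products of such perspectivities with distinct centres on a common axis, or distinct axes through a common centre, generate ever larger perspectivity groups. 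I would then apply Gleason-type results on planes rich in central collineations to force $\spaceP$ to be $(P,\ell)$-transitive for enough flags that the Lenz--Barlotti and Ostrom--Wagner theory makes it Desarguesian, giving the required contradiction.

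The main obstacle is this last step in the odd-order (homology) and Baer cases. For elations one has order-$p$ perspectivities and Gleason's lemma applies fairly directly to produce a translation plane and then Desargues; but homologies need not be $p$-elements, so controlling the groups generated by the many conjugate homologies, and ruling out the Baer configurations in which the fixed subplanes of $a,b,c$ interlock, is delicate. Here I expect to lean on the Burnside congruence $n^2+n+1+|\Fix a|+|\Fix b|+|\Fix c|\equiv 0\pmod 4$ to restrict $n$ modulo small integers, together with the results on central collineations and subplanes assembled in the companion work, in order to close the remaining cases.
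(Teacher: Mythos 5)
Your outer skeleton---classify the involutions of a putative Klein four-group $V=\langle a,b\rangle$ into Baer involutions, homologies and elations, and dispose of the perspectivity cases via central-collineation machinery---is reasonable as far as it goes, but the paper does not need to fight those cases at all: its Hypothesis \ref{h:basic} encodes, via Wagner and \cite[4.1.7]{dembov}, that in a point-transitive non-Desarguesian plane \emph{every} involution already fixes a Baer subplane of $u^2+u+1$ points (so $x=u^2$). Thus the homology and elation branches of your analysis, including the Gleason/Lenz--Barlotti step you flag as delicate, are dispatched by citation; your observations that a point-transitive group is line-transitive and that a plane cannot carry both involutory homologies and involutory elations are correct but standard, and they are not where the theorem lives.

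The genuine gap is that the whole content of Theorem A is precisely the case you defer: ruling out a four-group all of whose involutions are Baer, and your proposal has no mechanism for it. The orbit-counting congruence is far too weak: with $|\Fix(a)|=|\Fix(b)|=|\Fix(c)|=u^2+u+1$ and $n=u^2$ it reduces to $u(u^3+3)\equiv 0\pmod 4$, which merely constrains $u$ modulo $4$ and is satisfied by infinitely many $u$; and the Gleason-type generation of perspectivity groups is inapplicable here, since Baer involutions are not perspectivities and yield no $(P,\ell)$-transitivity to bootstrap. The paper closes this case by a long group-theoretic argument occupying Sections \ref{section:framework} and \ref{S: progress}: semi-regularity of $F(G)$ (Lemma \ref{L:firstly}) together with a non-semi-regular action of a four-group on a Sylow subgroup of $F(G)$ (Lemma \ref{L: threepossibilities}) forces every prime divisor of $|F(G)|$ to divide $u^2+u+1$; reduction modulo the Frattini kernel embeds $G/K$ in $GL(V_1)\times\cdots\times GL(V_r)$; the counting identity $|g^G|/|g^G\cap \Galph|=u^2-u+1$ (Lemma \ref{L: counting}) forces $u^2-u+1$ to divide a product of involution-centralizer indices; and Lemma A---proved by a case analysis of subgroups of $GL_n(q)$ through Aschbacher's classification, using the arithmetic of primes $\equiv 1\pmod 3$ dividing $u^2\pm u+1$---caps those indices sharply enough to contradict Lemma \ref{L: centralizerbound}. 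Appealing instead to ``results assembled in the companion work'' at exactly this point defers the entire theorem, so the proposal does not constitute a proof.
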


The statement of the next theorem is quite long and technical. The corollaries that come after it will make clear the power of the statement.

\begin{theoremb}
Suppose that a group $G$ acts transitively on the set of points of a finite non-Desarguesian projective plane of order $x$. Then $x=u^2$ for some integer $u$ and one of the following holds:
\begin{enumerate}
 \item $G$ has a normal $2$-complement (and so $G$ is soluble).
\item $G/O(G)$ is isomorphic to $\SL_2(3)$ or to a non-split degree $2$ extension of $\SL_2(3)$ (and so $G$ is soluble).
\item $G$ has a subgroup of index at most $2$ equal to $O(G)\rtimes K$ where $K$ is a group isomorphic to $\SL_2(5)$. Furthermore if $N_t$ is a Sylow $t$-subgroup of $F(G)$, for some prime $t$, then one of the following holds:
\begin{enumerate}
 \item[(i)] $t$ divides $u^2+u+1$;
 \item[(ii)] $t$ divides $u^2-u+1$, $N_G(N_t)$ contains a subgroup $H$ such that $H\cong SL_2(5)$, $H$ fixes a point  of $\spaceP$,  $N_t\rtimes H$ is a Frobenius group, and $N_t$ is abelian.
\end{enumerate}
Furthermore there exists a prime $t$ dividing both $|F(G)|$ and $u^2-u+1$.
\end{enumerate}
\end{theoremb}

The first corollary is one of the results mentioned in the abstract. It requires no proof.

\begin{corollary}\label{c: main}
Suppose that a group $G$ acts transitively on the set of points of $\spaceP,$ a finite non-Desarguesian projective plane. If $G$ is insoluble then $G/O(G)$ contains a subgroup $H$ of index at most $2$ such that $H$ is isomorphic to $SL(2,5)$.
\end{corollary}

Appealing to the Odd Order Theorem, we observe that Corollary~\ref{c: main} implies the following: If $G$ acts transitively on the points of a finite non-Desarguesian projective plane, then a composition factor of $G$ is either of prime order or is isomorphic to $A_5$. In other words we are very close to demonstrating that any counterexample to the conjecture mentioned above must be soluble. Such a conclusion is clearly the best one could hope for by making direct use of the Classification of Finite Simple Groups. Thus Corollary~\ref{c: main} brings the study of point-transitive finite projective planes to the same stage that Kantor's 1987 result brought the study of point-primitive finite projective planes \cite{kantor}. What is left is likely best attacked using number theory, and is undoubtedly very difficult indeed.

The next corollary is also reminiscent of Kantor's result, but in a different way. It is well-known that a finite Desarguesian projective plane of order $q$ admits an odd order group of automorphisms acting transitively on the set of points (take a {\it Singer cycle} in $\PGL_3(q)$). The next result asserts that the same is true for general finite projective planes.

\begin{corollary}\label{c: odd order}
Suppose $\spaceP$ is a finite projective plane admitting an automorphism group acting transitively on the set of points of $\spaceP$. Then there is a group of odd order that acts transitively on the set of points of $\spaceP$.
\end{corollary}

\subsection{Methods and structure}

Our proof of Theorem A is group-theoretic. Our main tool for the proof is the following result which is of interest in its own right.

\begin{lemmaa}\label{la}
Suppose that $n$ is a positive integer and that $q=p^a$ for some integer $a$ and prime $p\geq 7$. Let $H$ be an even order subgroup of $\GL_n(q)$. Then there exists an involution $g\in H$ such that
$$|H:C_H(g)|_{p'}\leq q^{n-1}+\dots+q+1.$$
\end{lemmaa}

(For integers $k,\ell$, we write $k_{\ell'}$ for the largest integer dividing $k$ that is coprime to $\ell$.) Note that it is quite possible that the bound given in Lemma A also holds when $p=3$ and/or $5$, however we have not attemped to prove this.

The connection between Lemma A and Theorem A is probably not immediately obvious. This connection is explored in \S\ref{section:framework}, the culmination of which is Proposition~\ref{prop:implication} and the subsequent corollary which asserts that Lemma A implies Theorem A.

In \S\ref{S: progress} we give a proof of Lemma A. This proof involves a detailed examination of subgroups of $\GL_n(q)$, and the involutions that they contain. In particular there is no reference to groups acting on projective planes in \S\ref{S: progress}. 

In addition to Lemma A, there are several results in \S\ref{S: progress} that may be of independent interest. In particular one of the results that we prove (see Lemmas~\ref{l: cn} and \ref{l: oddgln}) bears reproduction here:

\begin{proposition}\label{p: oddgln}
Suppose $H$ is an odd order subgroup of $\GL_n(q)$ where $q=p^f$ and $p\geq 7$. Then 
 \[|H|_{p'}< \frac 12 3^{n/3} q^n.\]
\end{proposition}

This result should be compared with, for instance, \cite[Theorem A]{collins}. The bound given in Proposition~\ref{p: oddgln} is not too far away from being sharp: suppose, for convenience, that $q\equiv 3\pmod 4$ and $n\equiv 3\pmod 6$, then one can easily find an odd-order subgroup in $\GL_n(q)$ of order $n\left((\frac32(q^3-1)\right)^{n/3}$, which is roughly a factor of $\frac{n}{2^{n-1}}$ away from the stated bound. 

The methods used to prove Proposition~\ref{p: oddgln} could, in principle, be applied to give much sharper bounds. Indeed the methods could also be applied to give bounds for $|H|_{p'}$ under the weaker assumption that $H$ is soluble (rather than odd-order). 

Finally in \S\ref{S: quaternionsylowtwos} we analyse the situation where $G$ is insoluble, and we prove Theorem B and Corollary~\ref{c: odd order}. 

Note that the methods used in different sections vary considerably and hence so does our notation. We explain our notation at the start of each section or subsection.

\subsection{Acknowledgments}

Thanks are due first to Professor Jan Saxl, who suggested to me some years ago that I should be able to prove a result similar to Theorem B by building on methods from my paper \cite{gill2}. It has taken me a long time to prove him right. 

I would also like to thank Professor Cheryl Praeger and an anonymous referee for help with the exposition of this manuscript, and Professors Marty Isaacs, L\' aszl\' o Pyber and Trevor Wooley who gave technical advice at crucial junctures.

\section{A framework to prove Theorem A}\label{section:framework}

Our aim in this section is to set up a framework to prove Theorem A. In order to do this we will split into two subsections. The first subsection outlines some basic group theory results that will be needed in the remainder of the paper. In the second subsection we will apply these results to the projective plane situation; in particular we will state Lemma A, and will demonstrate that Lemma A implies Theorem A.

\subsection{Some background group theory}\label{S: background}

Throughout this subsection we use standard group theory notation. For an element $g\in G,$ we write $g^G$ for the set of $G$-conjugates of $g$ in $G$. A cyclic group of order $n$ will sometimes just be written $n$. We write $G=N.H$ for an extension of $N$ by $H$; in other words $G$ contains a normal subgroup $N$ such that $G/N\cong H$. An element $g$ is an {\it involution} if $g^2=1$ and $g\neq 1$.

\begin{lemma}\label{L: oddnormal}
Let $G,H,N$ be groups with $N\lhd H \leq G$, and let $g\in N_G(H)$. If $|N|$ is odd then
$$|H:C_H(g)|=|N:C_N(g)|\times|H/N: C_{H/N}(gN)|.$$
\end{lemma}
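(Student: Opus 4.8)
The plan is to interpret both sides as conjugacy-class sizes and then to fibre the class of $g$ over the class of $gN$. Writing $g^H$ for the conjugacy class of $g$, orbit--stabilizer gives $|H:C_H(g)|=|g^H|$, $|N:C_N(g)|=|g^N|$ (the orbit of $g$ under conjugation by $N$), and $|H/N:C_{H/N}(gN)|=|(gN)^{H/N}|$. So the identity to be proved is $|g^H|=|g^N|\cdot|(gN)^{H/N}|$. First I would consider the quotient map $\pi\colon H\to H/N$ and the induced map $p\colon g^H\to (gN)^{H/N}$, $x\mapsto xN$. This is well defined (a conjugate of $g$ maps to a conjugate of $gN$), surjective, and equivariant for the conjugation action of $H$ on $g^H$ and the action of $H$ on $(gN)^{H/N}$ through $\pi$. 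Since $H$ acts transitively on both sets, $p$ is a surjection of transitive $H$-sets, so all of its fibres have the same size, namely $|g^H|/|(gN)^{H/N}|$. It therefore suffices to identify a single fibre, and the natural choice is the fibre over $gN$, which is exactly $g^H\cap gN$.

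The crux is the claim $g^H\cap gN=g^N$, and this is where the hypotheses that $|N|$ is odd and $g$ is an involution enter; I expect this to be the main obstacle. The inclusion $g^N\subseteq g^H\cap gN$ is immediate, since any $N$-conjugate of $g$ is an $H$-conjugate of $g$ lying in $gN$ (using $N\lhd H$ to see that conjugation by $n\in N$ preserves the coset $gN$). For the reverse inclusion, take $x\in g^H\cap gN$; then $x$ is an involution and $x=gn$ for some $n\in N$. I would work inside the subgroup $N\langle g\rangle=N\rtimes\langle g\rangle$ (the product is semidirect because $N\cap\langle g\rangle=1$ by coprimality) and write $\theta$ for conjugation by $g$, an automorphism of $N$ with $\theta^2=\mathrm{id}$. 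Expanding $(gn)^2=1$ rearranges to $\theta(n)=n^{-1}$. Because $|n|$ is odd, $n$ admits a square root inside its own cyclic group, namely $u=n^{(|n|+1)/2}\in\langle n\rangle$, which satisfies $u^2=n$ and $\theta(u)=u^{-1}$; a short computation then gives $u^{-1}gu=n^{-1}g=gn=x$, so $x\in g^N$. This square-root trick is precisely where oddness of $|N|$ is indispensable: it fails in general once $|n|$ is even.

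Combining the two steps, the fibre over $gN$ is $g^H\cap gN=g^N$, of size $|N:C_N(g)|$, and since every fibre has this common size we obtain $|g^H|=|g^N|\cdot|(gN)^{H/N}|$, which is the desired formula after translating back into index notation. The only delicate ingredient is the identification of the fibre via the square-root argument; the orbit-counting that frames it is routine.
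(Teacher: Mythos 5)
Your proof is correct, and it takes a genuinely different route from the paper's, though the two arguments pivot on equivalent key facts. The paper lifts the quotient centralizer: it takes $C\leq H$ with $C/N=C_{H/N}(gN)$, observes that $g^c\in N^*=\langle g,N\rangle\cong N.2$ for $c\in C$, and invokes Sylow's theorem in $N^*$ (where $\langle g\rangle$ and $\langle g^c\rangle$ are Sylow $2$-subgroups, hence conjugate) to conclude $g^{cn}=g$ for some $n\in N$, so that $C=N.C_H(g)$; the displayed formula then drops out of the product formula for indices. Your fibre identity $g^H\cap gN=g^N$ asserts exactly the same thing as $C=N.C_H(g)$ --- namely that $H$-conjugates of $g$ lying in the coset $gN$ are already $N$-conjugates --- but your framing via an equivariant surjection of transitive $H$-sets replaces the index arithmetic with orbit counting, and your square root $u=n^{(|n|+1)/2}$ replaces the Sylow appeal with an explicitly constructed conjugator: the verifications $(gn)^2=1$ giving $\theta(n)=n^{-1}$, then $u^2=n$, $\theta(u)=u^{-1}$, and $u^{-1}gu=n^{-1}g=gn$, all check out. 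What each buys: the paper's argument is a few lines shorter; yours is more elementary and constructive, and it pinpoints precisely where oddness of $|N|$ enters (existence of square roots in $\langle n\rangle$), at the cost of a slightly longer setup.
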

\begin{proof}
Take $C\leq H$ such that $C/N= C_{H/N}(gN)$. Then $C\geq C_H(g)$. Let $N^*=\langle g,N \rangle\cong N.2$ and take $c\in C$. Then $g^c\in N^*$.

Since $|N|$ is odd this implies that $g^{cn}=g$ for some $n\in N$ by Sylow's theorem. Thus $C=N.C_H(g)$. Then
$$|H:C|=|H:N.C_H(g)|=\frac{|H:C_H(g)|}{|N:N\cap C_H(g)|}=\frac{|H:C_H(g)|}{|N:C_N(g)|}.$$
Since $|H:C|=|H/N: C_{H/N}(gN)|$ we are done.
\end{proof}

Note that the lemma also applies with $g\in H=G$, and we will usually use it in this context.

\begin{lemma}\label{L: sylowtwos}
Let $H$ be a group and let $N\lhd H$. Let $t$ be a prime, let $P$ be a Sylow $t$-subgroup of $N$, and let $g\in P$. Then
$$\frac{|g^H|}{|g^N|}=\frac{|g^H\cap P|}{|g^N\cap P|}.$$
\end{lemma}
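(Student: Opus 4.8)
The plan is to evaluate both ratios explicitly and check that they agree. The starting observation is that, because $N\lhd H$, every $H$-conjugate of $g$ already lies in $N$, so $g^H\subseteq N$ and it genuinely makes sense to intersect $g^H$ with the Sylow $2$-subgroup $P\le N$. I would count the two intersections by one and the same device. Write $s$ for the number of Sylow $2$-subgroups of $N$ that contain $g$; this is at least $1$ since $g$ is a $2$-element of $N$, which also guarantees that no denominator below vanishes. Now count the set $\{x\in H : g^x\in P\}$ in two ways. On the one hand the map $x\mapsto g^x$ sends this set onto $g^H\cap P$ with every fibre a coset of $C_H(g)$, so the set has size $|g^H\cap P|\,|C_H(g)|$. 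On the other hand $g^x\in P$ is equivalent to $g\in {}^{x}P$, and as $x$ runs over $H$ the subgroups ${}^{x}P$ run over \emph{all} Sylow $2$-subgroups of $N$ (here normality of $N$ is essential: each ${}^{x}P$ lies in $N$ and has the correct order), each such subgroup being hit exactly $|N_H(P)|$ times. Restricting to those containing $g$ gives $|N_H(P)|\,s$. Hence $|g^H\cap P|=|N_H(P)|\,s/|C_H(g)|$.

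Running the identical argument with $H$ replaced by $N$ throughout yields $|g^N\cap P|=|N_N(P)|\,s/|C_N(g)|$, with the \emph{same} $s$, since the $N$-conjugates of $P$ are again precisely the Sylow $2$-subgroups of $N$. Dividing, the factor $s$ cancels and I am left with
$$\frac{|g^H\cap P|}{|g^N\cap P|}=\frac{|N_H(P)|\,|C_N(g)|}{|N_N(P)|\,|C_H(g)|}.$$
Meanwhile the left-hand side of the lemma is $|g^H|/|g^N|=\bigl(|H|/|C_H(g)|\bigr)/\bigl(|N|/|C_N(g)|\bigr)=|H|\,|C_N(g)|/\bigl(|N|\,|C_H(g)|\bigr)$. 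Comparing the two expressions, the whole lemma collapses to the single identity $|N_H(P)|/|N_N(P)|=|H|/|N|$.

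This last identity is where the real content sits, and I expect it to be the only genuine obstacle. It is exactly the Frattini argument: since $N\lhd H$ and $P$ is a Sylow $2$-subgroup of $N$, we have $H=N\,N_H(P)$, whence $|H:N|=|N_H(P):N_H(P)\cap N|=|N_H(P):N_N(P)|$, using $N_N(P)=N_H(P)\cap N$. Rearranging gives $|N_H(P)|/|N_N(P)|=|H|/|N|$, and substituting this back into the displayed ratio finishes the proof. The only points needing care are bookkeeping ones: confirming that the fibres of $x\mapsto g^x$ and of $x\mapsto {}^{x}P$ have the stated constant sizes $|C_H(g)|$ and $|N_H(P)|$ respectively, and checking nonemptiness of $g^N\cap P$ — which holds because $g$, being a $2$-element, lies in some Sylow $2$-subgroup $Q$ of $N$, and writing $Q={}^{n}P$ with $n\in N$ places $g^{n}$ inside $P$.
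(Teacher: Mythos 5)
Your proof is correct: the two fibre computations (fibres of $x\mapsto g^x$ are cosets of $C_H(g)$, fibres of $x\mapsto {}^{x}P$ are cosets of $N_H(P)$), the observation that the same $s$ appears in the $H$-count and the $N$-count, and the final reduction to $|N_H(P):N_N(P)|=|H:N|$ all check out. The paper's proof uses the same two ingredients --- a double count and the Frattini argument --- but deploys them in the opposite order, and your rearrangement is genuinely different bookkeeping. The paper invokes Frattini \emph{first}, to identify the set of $H$-conjugates of $P$ with the set of $N$-conjugates (say $c$ of them, with $d$ containing $g$), and then double-counts the incidence sets $\{(x,Q): x\in g^H,\ Q\in P^H,\ x\in Q\}$ and $\{(x,Q): x\in g^N,\ Q\in P^N,\ x\in Q\}$; since both counts involve the same constants $c$ and $d$, these cancel on dividing, and the paper never needs to mention a normalizer or a centralizer. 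You instead fix $P$, fibre the solution set $\{x\in H: g^x\in P\}$ in two ways, and defer Frattini to the very end in the form of the index identity $H=N\,N_H(P)$. The trade-off is that you do more explicit computation but also prove strictly more: your argument yields the closed formulas $|g^H\cap P|=s\,|N_H(P)|/|C_H(g)|$ and $|g^N\cap P|=s\,|N_N(P)|/|C_N(g)|$, of which the lemma is the quotient. One small point in your version's favour: the paper's identity $|\{(x,Q)\}|=|g^H|\,d$ silently uses that \emph{every} element of $g^H$ lies in exactly $d$ members of $P^H$ (true, because conjugation by $H$ permutes $P^H$ and carries the conjugates containing $g$ to those containing $g^h$), whereas in your count the constancy of all the relevant fibres is verified explicitly, and you also note why the denominator $|g^N\cap P|$ is nonzero, which the paper leaves unsaid.
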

\begin{proof}
Observe that, by the Frattini argument, the set of $N$-conjugates of $P$ is the same as the set of $H$-conjugates of $P$; say there are $c$ of these. Let $d$ be the number of such $N$-conjugates of $P$ which contain the element $g$. Now count the size of the following set in two different ways:
$$|\{(x,Q):x\in g^H, Q\in P^H, x\in Q\}| = |g^H|d = c|g^H\cap P|.$$
Similarly we count the size of the following set in two different ways:
$$|\{(x,Q):x\in g^N, Q\in P^N, x\in Q\}| = |g^N|d = c|g^N\cap P|.$$
Our result follows.
\end{proof}

We now use the previous two results to study the centralizer of an involution in a subgroup of a direct product of groups.

\begin{lemma}\label{L: invcentralizer}
Let $H_1,\dots, H_r$ be groups, let $H$ be a subgroup of $H_1\times\dots\times H_r$, and let $g$ be an involution in $H$. For $i=1,\dots, r$, define
\begin{itemize}
 \item $L_i$ to be the projection of $H$ to $H_i\times H_{i+1}\times\dots\times H_r$,
 \item $g_i$ to be the projection of $g$ to $H_i\times H_{i+1}\times\dots\times H_r$.
\end{itemize}
For $i=1,\dots, r-1$, define
\begin{itemize}
 \item $\psi:L_i\to L_{i+1}$ to be the natural projection, and
 \item $T_i$ to equal $\ker(\psi_i)$.
\end{itemize}
Let $k$ be the smallest integer such that $|T_i|$ is even, and let $P$ be a Sylow $2$-subgroup of $T_k$. 
\begin{enumerate}
 \item If $k=r$, then
 \[|H:C_H(g)|=\left(\prod_{i=1}^{r-1}|T_i:C_{T_i}(g_i)|\right)|L_r:C_{L_r}(g_r).\]
\item If $k<r$, then,  provided $g_{k+1}=\cdots = g_r=1$,
 \end{enumerate}
\[|H:C_H(g)|=\left(\prod_{i=1}^{k}|T_i:C_{T_i}(g_i)|\right)\frac{|(g_k)^{L_k}\cap P|}{|(g_k)^{T_k}\cap P|}.\]
\end{lemma}
\begin{proof}
If $T_1$ has odd order then Lemma \ref{L: oddnormal} implies that
\begin{eqnarray*}
|H:C_H(g)|&=&|T_1:C_{T_1}(g)|\times |H/T_1: C_{H/T_1}(T_1g)| \\
&=& |T_1:C_{T_1}(g)|\times |\psi_1(H): C_{\psi_1(H)}(\psi_1(g))| \\
&=& |T_1:C_{T_1}(g_1)|\times |L_2: C_{L_2}(g_2)|.
\end{eqnarray*}
Now $L_2$ is a subgroup of $H_2\times\dots\times H_r$ and so we can iterate the procedure. This implies that
$$|H:C_H(g)|=\left(\prod_{i=1}^{k-1}|T_i:C_{T_i}(g_i)|\right)|L_k:C_{L_k}(g_k)|.$$
If $k=r$ then we are done. If $k<r$ then we must calculate the centralizer of $g_{k}$ in $L_{k}\leq H_k\times\dots\times H_r$. Then we apply Lemma \ref{L: sylowtwos} using $T_k$ for our normal subgroup $N$. Then
$$|L_k:C_{L_k}(g_k)|=|T_k:C_{T_k}(g_k)|\times \frac{|(g_k)^{L_k}\cap P|}{|(g_k)^{T_k}\cap P|}.$$
\end{proof}

We conclude this subsection with some results concerning Sylow subgroups.

\begin{lemma}\label{l: sylow odd order}
Let $G$ be a group and let $H\lhd G$ with $|H|$ odd. Suppose that $g$ is an involution in $P$, a $2$-subgroup of $G$. Define $\phi:G\to G/H$ to be the natural projection map. Then
$$|g^G\cap P| = |(gH)^{G/H}\cap \phi(P)|$$
\end{lemma}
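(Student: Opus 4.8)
The plan is to show that $\phi$ restricts to a bijection between $g^G\cap P$ and $(gH)^{G/H}\cap\phi(P)$, from which the equality of cardinalities follows immediately. The starting observation is that, since $P$ is a $2$-group and $|H|$ is odd, we have $P\cap H=1$; hence $\phi$ restricted to $P$ is injective and furnishes an isomorphism from $P$ onto $\phi(P)$. This is the mechanism that will allow us to compare conjugacy inside $P$ with conjugacy inside $\phi(P)$.

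First I would check that $\phi$ maps $g^G\cap P$ into $(gH)^{G/H}\cap\phi(P)$. This is routine: if $x=g^s$ for some $s\in G$ then $\phi(x)=(gH)^{\phi(s)}\in (gH)^{G/H}$, and if moreover $x\in P$ then $\phi(x)\in\phi(P)$. Because $\phi|_P$ is injective, this restricted map is injective, and so $|g^G\cap P|\leq |(gH)^{G/H}\cap\phi(P)|$.

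The reverse inequality is where the real work lies, and it is here that the Sylow argument of Lemma \ref{L: oddnormal} reappears. Take any $y\in (gH)^{G/H}\cap\phi(P)$ and let $x$ be the unique preimage of $y$ in $P$. Since $\phi(x)^2=\phi(x^2)=1$ forces $x^2\in P\cap H=1$, and since $y\neq 1$, the element $x$ is an involution. Writing $y=(gH)^{\phi(t)}$ for some $t\in G$, we find that $x$ and $g^t$ lie in the same coset of $H$, so both are involutions in the subgroup $N^*:=\langle g^t,H\rangle=H\langle g^t\rangle$, a group of order $2|H|$. Every involution of $N^*$ generates a Sylow $2$-subgroup of $N^*$, and all such Sylow $2$-subgroups are conjugate by Sylow's theorem; hence $x$ is conjugate to $g^t$ in $N^*$, and therefore $x\in g^G$. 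Thus $x\in g^G\cap P$ with $\phi(x)=y$, which establishes surjectivity.

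Combining injectivity and surjectivity, $\phi$ restricts to a bijection $g^G\cap P\to (gH)^{G/H}\cap\phi(P)$, and the claimed equality of orders follows. I expect the only subtle point to be this surjectivity step: one must verify that an element of $P$ whose image is merely $(G/H)$-conjugate to $gH$ is in fact $G$-conjugate to $g$, and this is precisely what the Sylow argument inside the odd-by-$2$ group $N^*$ delivers, exactly as in the proof of Lemma \ref{L: oddnormal}.
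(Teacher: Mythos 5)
Your proof is correct and takes essentially the same route as the paper's: injectivity of $\phi$ on $g^G\cap P$ via the coprimality of $|P|$ and $|H|$ (the paper phrases it as $g_1^{-1}g_2\in P\cap H$ forcing $g_1=g_2$, you as $P\cap H=1$), and surjectivity via Sylow's theorem applied inside the odd-by-$2$ subgroup $H\langle g^t\rangle$, which is exactly the paper's conjugation of the order-$2$ Sylow subgroups of $\langle g^{g_1},H\rangle=\langle p,H\rangle$. Your write-up is slightly more explicit in verifying that the preimage $x\in P$ is genuinely an involution, but the argument is the same.
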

\begin{proof}
Clearly $\phi$ maps $g^G\cap P$ onto $(gH)^{G/H}\cap \phi(P)$. Suppose that $\phi(g_1)=\phi(g_2)$ for $g_1, g_2\in g^G\cap P$. Then $g_1H=g_2H$ and so $g_1^{-1}g_2\in H$, which has odd order. Since $g_1^{-1}g_2$ also lies in $P$, which has even order, we conclude that $g_1=g_2$. Thus $\phi|_{g^G\cap P}$ is 1-1 and the result follows. 

\end{proof}

The next result is \cite[Theorem 1.5]{maroti}.

\begin{lemma}\label{l: maroti}
 Let $H$ be a nilpotent subgroup of the symmetric group $S_n$. Then $H$ contains at most $1.52^n$ conjugacy classes.
 \end{lemma}

\begin{lemma}\label{l: conj in sylow}
Let $q$ be an odd prime power. The number of conjugacy classes of involutions in a subgroup of $\GL_n(q)$ is strictly less than $3.04^n.$
\end{lemma}
\begin{proof}
Let $H$ be a subgroup of $\GL_n(q)$ and take $P$, a Sylow $2$-subgroup of $H$. Observe that $P$ contains a representative of every conjugacy class of involutions in $H$. Thus, to prove the result, we can assume that $H=P$, i.e. $H$ is a $2$-group. Let $P_n$ be a Sylow $2$-subgroup of $\GL_n(q)$ and assume that $H\leq P_n$.

Observe first that $P_{2k+1} = 2\times P_{2k}$ hence we obtain immediately that $C_{2k+1}\leq 2C_{2k}$ and it is sufficient to prove the result for $n$ even; assume this from here on.

Suppose next that $q\equiv 3\pmod 4$. Then a Sylow $2$-subgroup of $\GL_n(q)$ is a subgroup of $\GL_n(q^2)$ and $q^2\equiv 1\pmod 4$. Hence it is sufficient to assume that $q\equiv 1\pmod 4$. In this case $H$ is a subgroup of $(q-1)^n: S_n$; we take $H$ to be a subgroup of $\langle D, M\rangle$ where $D$ is the group of invertible diagonal matrices, and $M$ is the set of permutation matrices. 

Write $D_0$ for $D\cap H$ and observe first that there are at most $2^n-1$ involutions in $D_0$. Now consider a coset of $D_0$ in $H$, $gD_0$, where $g$ is an involution in $H\backslash D_0$. Choose a basis so that $g$ corresponds to permutation $(1 \, 2)(3 \, 4) \cdots (k-1, k)$ as follows:
$$g=\left(\begin{matrix}
            & g_1 && & \\
g_1^{-1} &   & & & \\
&&&g_3 & \\
&&g_3^{-1} && \\
&&&& \ddots 
          \end{matrix}\right).$$
Choose $d$ to be a diagonal matrix:
$$d=\left(\begin{matrix}
           d_1 & & \\
& d_2 & \\
& & \ddots
          \end{matrix}\right).$$
Now $D_0$ acts by conjugation on the coset $gD_0$ and observe that 
\begin{itemize}
 \item $dgd^{-1} = g$ if and only if $d_i=d_{i+1}$ for $i=1,3,\dots, k-1$;
\item $(dg)^2=1$ if and only if $d_id_{i+1}=1$ for $i=1,3,\dots, k-1$ and $d_{k+1},\dots, d_n=\pm 1$.
\end{itemize}
Let $D_1=\{d\in D_0: (dg)^2=1\}$, a subgroup of $D_0$. Observe that $|C_{D_1}(g)|\leq 2^{n-\frac{k}2}$.  What is more, since $D_0$ is abelian, $C_{D_1}(g) = C_{D_1}(gd)$ for any $d\in D_0$, thus every orbit of $D_1$ on the coset $gD_0$ must have size at least $\frac{|D_1|}{2^{n-\frac{k}2}}$. In other words the number of $D_1$-conjugacy classes of involutions represented in the coset $gD_0$ is at most $2^{n-\frac{k}2}<2^n$.

It is therefore clear that the number of conjugacy classes of involutions in $H$ is strictly less than $2^n\times (C'_n+1)$ where $C'_n$ is the maximum number of conjugacy classes of involutions in a subgroup of $S_n$. Lemma~\ref{l: maroti} implies that $C'_n+1\leq 1.52^n$ and the result follows.
\end{proof}

Lemmas \ref{L: sylowtwos} and \ref{l: conj in sylow} imply that if $g\in N\lhd H<\GL_n(q)$ with $g^2=1$, then
$$\frac{|g^H\cap P|}{|g^N\cap P|}<3.04^n$$
for $P$ a Sylow $2$-subgroup of $N$.

\subsection{The projective plane situation}\label{SS: pp}

This subsection is the last, until Section \ref{S: quaternionsylowtwos}, in which we will directly consider projective planes. Hence all the notation in this subsection is self-contained and we will develop this notation as we go along.

We begin by stating a hypothesis which will hold throughout this subsection. The conditions included represent, by \cite{wagner} and \cite[4.1.7]{dembov}, the conditions under which a group may act transitively on the points of a non-Desarguesian projective plane.

\begin{hypothesis}\label{h:basic}
Suppose that a group $G$ acts transitively upon the points of a non-Desarguesian projective plane $\spaceP$ of order $x>4$. If $G$ contains any involutions then each fixes a Baer subplane; in particular they fix $u^2+u+1$ points where $x=u^2, u>2$. Furthermore in this case any non-trivial element of $G$ fixing at least $u^2$ points fixes either $u^2+u+1, u^2+1$ or $u^2+2$ points of $\spaceP$.
\end{hypothesis}

We collect some significant facts that follow from Hypothesis \ref{h:basic}:

\begin{lemma}\label{L:firstly}
The Fitting group and the generalized Fitting group of $G$ coincide, i.e. $F^*(G)=F(G).$ What is more, $F(G)$ acts semi-regularly on the points of $\spaceP$. Further, if $p$ is a prime dividing $u^2+u+1,$ then $p\equiv 1\pmod3$ or $p=3$ and $9$ does not divide $u^2+u+1$.
\end{lemma}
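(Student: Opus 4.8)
The plan is to treat the three assertions separately, handling the purely arithmetic one last and concentrating effort on the structural statements. Throughout I would record the basic observation that the number of points of $\spaceP$ is $x^2+x+1=u^4+u^2+1=(u^2+u+1)(u^2-u+1)$, which is \emph{odd} (as $u^2(u^2+1)$ is even). Consequently the orbits of $O_2(G)$ have $2$-power length dividing an odd number, so every such orbit is a singleton; since $G$ acts faithfully this forces $O_2(G)=1$, and hence $F(G)=\prod_{p\ \mathrm{odd}}O_p(G)$ has odd order. In particular the semi-regularity claim need only be addressed at odd primes.

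For the semi-regularity of $F(G)$, since $F(G)$ is nilpotent and normal it suffices to prove each $O_p(G)$ (for $p$ odd) is semi-regular, and since any point-stabilising element has a prime-order power stabilising the same point it is enough to show no element of order $p$ in $O_p(G)$ fixes a point. So suppose $z\in O_p(G)$ has order $p$ and fixes a point. First I would rule out that $z$ is a perspectivity: for then, as $O_p(G)\lhd G$ and $G$ is point-transitive, the conjugates $z^g$ would be perspectivities whose centres range over \emph{all} points of $\spaceP$, and a plane admitting such a configuration is Desarguesian by the standard theory of perspectivities \cite{dembov}, contrary to hypothesis. Hence the fixed elements of $z$ form a proper subplane, and by Hypothesis~\ref{h:basic} the fixed-point count lies in a short list. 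Since $O_p(G)\lhd G$, the set $\Fix(O_p(G))$ is $G$-invariant and therefore empty, whereas each conjugate $z^g$ fixes a sizeable configuration; exploiting the fixed-point dichotomy of Hypothesis~\ref{h:basic} together with transitivity then yields a counting contradiction. I would conclude $O_p(G)_P=1$ for every point $P$, so $F(G)$ is semi-regular and $|F(G)|$ divides $u^4+u^2+1$.

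To prove $F^*(G)=F(G)$ it is equivalent to show $E(G)=1$, i.e.\ that $G$ has no component. Suppose $K$ is a component; then $K$ is quasi-simple with $K/Z(K)$ non-abelian simple, and by Feit--Thompson $K$ has even order, so contains involutions, all of which are Baer. A cyclic Sylow $2$-subgroup is impossible by Burnside's normal $p$-complement theorem (a quasi-simple group has no normal $2$-complement), and if the Sylow $2$-subgroup of $K$ is neither cyclic nor generalized quaternion then $K$ contains a Klein four-group of Baer involutions, excluded by the analysis of configurations of Baer subplanes in a transitive plane (as in \cite{gill2}). The surviving case is that $K$ has generalized quaternion Sylow $2$-subgroups, so its unique involution $z$ is central in $K$; then $C_G(z)\supseteq K$ stabilises the Baer subplane $\Fix(z)$, a projective plane of order $u$, and induces on it the faithful action of $K/\langle z\rangle$. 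I would finish by showing this descent is impossible, the induced group being too large for a plane of order $u$, again appealing to the restrictions of \cite{gill2} and Hypothesis~\ref{h:basic}; this eliminates all components and gives $F^*(G)=F(G)$.

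The arithmetic claim is elementary and geometry-free. If $p\mid u^2+u+1$ and $p\neq 3$, then $p\nmid u-1$ because $u^2+u+1\equiv 3\pmod{u-1}$, so $\gcd(u-1,u^2+u+1)\mid 3$; since $u^2+u+1\mid u^3-1$, the element $u$ then has order exactly $3$ modulo $p$, forcing $3\mid p-1$, i.e.\ $p\equiv 1\pmod 3$. If $p=3$, then $3\mid u^2+u+1$ forces $u\equiv 1\pmod 3$, and writing $u=3k+1$ gives $u^2+u+1=3(3k^2+3k+1)$ with $3k^2+3k+1\equiv 1\pmod 3$, so $9\nmid u^2+u+1$. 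The main obstacle is the component analysis, and specifically the exclusion of $SL_2$-type components: having a single central involution and generalized quaternion Sylow $2$-subgroups, they evade the Klein four-group argument entirely, and one is forced into the delicate descent to the fixed Baer subplane $\Fix(z)$ and an analysis of the induced $PSL_2$-type action on a plane of order $u$. I expect the semi-regularity count to be more routine, but both arguments ultimately lean on the groundwork laid in \cite{gill2}.
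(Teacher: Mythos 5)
You should first be aware that the paper does not actually prove this lemma: its entire ``proof'' is a citation of \cite[Theorem A]{gill2} and \cite[Theorem A]{gill4} for the two structural claims, and of \cite[p.33]{kantor} for the arithmetic claim. Measured against that, your final paragraph is genuinely valuable: the argument that $p\mid u^2+u+1$, $p\neq 3$ forces $u$ to have order exactly $3$ modulo $p$ (since $p\mid u-1$ would give $p\mid 3$), whence $p\equiv 1\pmod 3$, and the computation showing $9\nmid u^2+u+1$ when $p=3$, is a correct and complete reconstruction of Kantor's fact. Your opening observation is also sound once phrased properly: the orbits of the normal subgroup $O_2(G)$ all have a common $2$-power length dividing the odd number $u^4+u^2+1=(u^2+u+1)(u^2-u+1)$, so $O_2(G)=1$ and $F(G)$ has odd order.

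The two structural halves, however, contain genuine gaps at exactly the decisive points. For semi-regularity: (i) there is no ``standard theory of perspectivities'' giving that a plane in which every point is the centre of a nontrivial perspectivity is Desarguesian; the results behind Hypothesis~\ref{h:basic} (\cite{wagner}, \cite[4.1.7]{dembov}) exclude perspectivities only for \emph{involutions}, and for odd prime order $p$ no off-the-shelf exclusion exists. (ii) Your promised ``counting contradiction'' is never supplied, and it cannot run through Hypothesis~\ref{h:basic} as suggested, because that dichotomy applies only to elements fixing at least $u^2$ points; an odd-prime-order non-perspectivity may fix only a small (possibly degenerate) closed substructure, so nothing places $z$ within the hypothesis's scope. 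For $F^*(G)=F(G)$: (iii) excluding a Klein four-group of Baer involutions is essentially Theorem A of the \emph{present} paper, which is proved downstream of this lemma (both Lemma~\ref{L: threepossibilities} and Proposition~\ref{prop:implication} invoke Lemma~\ref{L:firstly}), so appealing to it --- or deferring to ``the analysis in \cite{gill2}'', the very source the paper cites for this statement --- is circular or vacuous as a blind proof. (iv) The terminal $SL_2$-type case fails as sketched: the kernel of the action of a quasi-simple component $K$ on $\Fix(z)$ is only known to be $Z(K)$ or all of $K$, so faithfulness of $K/\langle z\rangle$ on the Baer subplane is unjustified; and ``too large for a plane of order $u$'' is not a usable principle, since such a plane may admit a collineation group as large as $P\Gamma L_3(u)$, which dwarfs $SL_2(q)$. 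Indeed Theorem B shows that $SL_2(5)$ sections genuinely occur in this setting, so any elimination of components of this type must exploit subnormality in $G$ in an essential way, not merely the descent to $\Fix(z)$. In short: your arithmetic part stands, but the two structural claims remain unproven sketches whose hard steps are deferred back to \cite{gill2} and \cite{gill4} --- which is precisely what the paper's one-line proof does anyway.
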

\begin{proof}
The results about the Fitting group of $G$ can be found in \cite[Theorem A]{gill2} and \cite[Theorem A]{gill4}. The result about prime divisors can be found in \cite[p.33]{kantor}.
\end{proof}

Note that $u^2-u+1=(u-1)^2+(u-1)+1$ hence Lemma \ref{L:firstly} implies that if $p$ is a prime dividing $u^2-u+1,$ then $p\equiv 1\pmod3$ or $p=3$ and $9$ does not divide $u^2-u+1$.

Write $\alpha$ for a point of $\spaceP$. For integers $k$ and $w$, write  $k_w$ (resp. $k_{w'}$) for the largest divisor of $k$ which is a power of $w$ (resp. coprime to $w$). We write $\Fix(g)$ for the set of fixed points of $g$; similarly $\Fix(H)$ is the set of fixed points of a subgroup $H<G$. We begin with the following observation:

\begin{lemma}\label{L: threepossibilities}
Suppose that $G$ contains an involution. Then one of the following holds:
\begin{enumerate}
\item the Sylow $2$-subgroups of $G$ are cyclic or generalized quaternion;

\item All primes which divide $|F(G)|$ must also divide $u^2+u+1$.
\end{enumerate}
\end{lemma}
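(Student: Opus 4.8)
The plan is to prove the contrapositive-flavoured dichotomy directly: assuming $G$ contains an involution, I want to show that either the Sylow $2$-subgroups are cyclic or generalized quaternion (so $m_2(G)=1$), or else every prime dividing $|F(G)|$ divides $u^2+u+1$. By Lemma~\ref{L:firstly}, $F^*(G)=F(G)$ and $F(G)$ acts semi-regularly on the points of $\spaceP$. Semi-regularity is the crucial leverage: it means that a nontrivial element of $F(G)$ has \emph{no} fixed points, so the order $|F(G)|$ must divide the number of points $x^2+x+1 = u^4+u^2+1$. The first step is therefore to factor this point-count and understand which primes can divide $|F(G)|$.

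\medskip

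\noindent\textbf{The arithmetic of the orbit length.} First I would record the factorization $u^4+u^2+1=(u^2+u+1)(u^2-u+1)$. Since $F(G)$ is semi-regular, $|F(G)|$ divides $(u^2+u+1)(u^2-u+1)$, so every prime $p$ dividing $|F(G)|$ divides one of these two factors. The two factors are coprime away from small primes (one checks $\gcd(u^2+u+1,u^2-u+1)$ divides $2u$, and since both factors are odd this gcd divides the odd part of $u$, with a short argument forcing it to be $1$ or $3$), so the primes split cleanly between the two factors. If \emph{every} prime divisor of $|F(G)|$ already divides $u^2+u+1$, we are in case~(2) and done. So I would assume for contradiction that some prime $p$ dividing $|F(G)|$ divides $u^2-u+1$ but not $u^2+u+1$, and aim to force $m_2(G)=1$.

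\medskip

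\noindent\textbf{Forcing $m_2(G)=1$ via the fixed-point structure of involutions.} This is where the geometric hypothesis must re-enter. By Hypothesis~\ref{h:basic}, any involution in $G$ fixes a Baer subplane of $u^2+u+1$ points. The idea is to use the remark following Lemma~\ref{L:firstly}: a prime $p$ dividing $u^2-u+1$ satisfies $p\equiv 1\pmod 3$ or $p=3$ with $9\nmid u^2-u+1$. Let $Q$ be the Sylow $p$-subgroup of $F(G)$ for such a $p$; because $F(G)$ is nilpotent, $Q$ is characteristic, hence normal in $G$, and an involution $g\in G$ acts by conjugation on $Q$. I would study the module structure of $Q$ under $\langle g\rangle$: writing $Q=C_Q(g)\times [Q,g]$ (for odd-order $Q$ this decomposition is clean), the involution $g$ inverts $[Q,g]$. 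The key tension is between the fixed-point count forced by the Baer involution and the semiregular action: elements of $C_Q(g)$ commute with $g$, and I would argue that a nontrivial such element would either enlarge $\Fix(g)$ beyond what Hypothesis~\ref{h:basic} permits or create additional involutions centralizing $g$ in a way that builds an elementary abelian $2$-group of rank $\geq 2$. Concretely, if $m_2(G)\geq 2$ there are two commuting involutions $g,h$; their product and the interplay of their Baer subplanes (each of size $u^2+u+1$) overlap in a controlled number of points, and combining this with the semiregularity of $F(G)$ on points gives a divisibility constraint that forces $p\mid u^2+u+1$, contradicting our assumption.

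\medskip

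\noindent\textbf{Anticipated main obstacle.} The hard part will be the last step: converting the abstract claim ``$m_2(G)\geq 2$'' into a concrete contradiction with ``$p\mid u^2-u+1$, $p\nmid u^2+u+1$''. The difficulty is that Hypothesis~\ref{h:basic} only controls the fixed-point sets of \emph{single} involutions (and of elements fixing at least $u^2$ points), not directly the joint fixed structure of a rank-$2$ elementary abelian group or its centralizer in $F(G)$. I expect the real work is to show that an involution $g$ commuting with a nontrivial element of the Sylow $p$-subgroup $Q$ (with $p\mid u^2-u+1$) leads to $g$ fixing the ``wrong'' number of points, i.e.\ a number incompatible with the allowed values $u^2+u+1,\,u^2+1,\,u^2+2$. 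Pinning down exactly how $C_Q(g)$ interacts with $\Fix(g)$, and ruling out the borderline cases $p=3$ and the exceptional gcd value $3$, is where the calculation will be most delicate and where I would expect to lean hardest on the results imported from \cite{gill2} and \cite{kantor}.
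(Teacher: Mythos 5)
Your proposal has a genuine gap, and it is exactly the one you flag yourself: the bridge from $m_2(G)\geq 2$ to the divisibility conclusion is never built, and the route you sketch is not the one that works. Two ideas are missing. First, you need a reason why some involution centralizes a nontrivial element of the relevant Sylow $r$-subgroup $N$ of $F(G)$. With a single involution $g$ nothing forces $C_N(g)\neq 1$: $g$ could invert $N$, which is precisely what happens when a cyclic or generalized quaternion $2$-group acts fixed-point-freely (Frobenius complements) --- this is why case (1) of the dichotomy exists at all. The paper's proof uses $m_2(G)\geq 2$ exactly here, via the coprime-action generation theorem \cite[(40.6)]{aschbacher3}: a noncyclic elementary abelian $2$-group acting on the odd-order group $N$ cannot act semi-regularly, so $C_N(g)\neq 1$ for some involution $g$. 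Second, the punchline is not a contradiction with the permitted fixed-point counts of Hypothesis \ref{h:basic}. Once $C_N(g)\neq 1$, observe that $C_N(g)$ commutes with $g$ and hence permutes $\Fix(g)$, a set of $u^2+u+1$ points; since $F(G)$ acts semi-regularly on all of $\spaceP$ (Lemma \ref{L:firstly}), the nontrivial $r$-group $C_N(g)$ acts semi-regularly on $\Fix(g)$, and counting orbits gives $r\mid u^2+u+1$ immediately. This argument applies to \emph{every} prime $r$ dividing $|F(G)|$, so your entire arithmetic preamble (the factorization $u^4+u^2+1=(u^2+u+1)(u^2-u+1)$, the gcd analysis, the choice of a bad prime $p\mid u^2-u+1$) is unnecessary.

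The plan you propose in its place would not succeed as stated: an involution $g$ with $C_Q(g)\neq 1$ fixes exactly $u^2+u+1$ points, which is perfectly compatible with Hypothesis \ref{h:basic}, so there is no ``wrong fixed-point count'' to aim at; the divisibility constraint comes from the orbit count of $C_Q(g)$ on $\Fix(g)$, not from enlarging or distorting $\Fix(g)$. Likewise the ``two commuting involutions and the interplay of their Baer subplanes'' idea would require controlling intersections of Baer subplanes, which Hypothesis \ref{h:basic} does not provide and which the correct proof never needs. Two smaller inaccuracies: $Q=C_Q(g)\times[Q,g]$ holds only for $Q$ abelian (in general one has only $Q=C_Q(g)[Q,g]$), and $\gcd(u^2+u+1,u^2-u+1)$ is exactly $1$, not ``$1$ or $3$'' (any common divisor divides $2u$, is odd, hence divides $u$, hence divides $u^2+u+1-u(u+1)=1$) --- though both points are moot once the arithmetic detour is dropped.
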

\begin{proof}
Suppose that the Sylow $2$-subgroups of $G$ are not cyclic or generalized quaternion. This is equivalent to supposing that the Sylow $2$-subgroups of $G$ contain a Klein $4$-group. Let $N$ be a Sylow $r$-subgroup of $F(G)$, for some prime $r$, and observe that any subgroup of $G$ acts by conjugation on $N$. Then \cite[(40.6)]{aschbacher3} implies that a Sylow $2$-subgroup of $G$ does not act semi-regularly on $N$. Hence $G$ contains an involution $g$ for which $C_N(g)$ is non-trivial. Now $C_N(g)$ acts on $\Fix(g)$, a set of size $u^2+u+1$. Because $F(G)$ acts semi-regularly on the points of $\spaceP$ we conclude that $r$ divides $u^2+u+1$.
\end{proof}

We will be interested in the second of these possibilities. So for the rest of this subsection we add the following to our hypothesis:
\begin{hypothesis}\label{h:main}
Suppose that $G$ contains an involution and that all primes dividing $|F(G)|$ also divide $u^2+u+1$. 
\end{hypothesis}

Clearly if we can show that Hypotheses 1 and 2 lead to a contradiction then Lemma \ref{L: threepossibilities} will imply Theorem A. Over the rest of this subsection we will work towards showing that, provided Lemma A is true, such a contradiction does indeed follow from these hypotheses. (We will state Lemma A shortly.)

Write $u^2+u+1=p_1^{a_1}\cdots p_r^{a_r}$ (a product of prime powers) and observe that, by Lemma \ref{L:firstly}, $p_i\equiv 1\pmod3$ or else $p_i^{a_i}=3$. Let $F(G)=N_1\times N_2\times\dots\times N_r$ where $N_i\in Syl_{p_i} F(G)$ and set $Z=Z(F(G))$. Since $F^*(G)=F(G)$, we conclude that $G/Z$ is a subgroup of 
$$Aut N_1\times\dots\times Aut N_r$$
(see, for instance, \cite[(31.13)]{aschbacher3}). Write $V_i=N_i/\Phi(N_i)$. Here $\Phi(N_i)$ is the Frattini subgroup of $N_i$, hence $V_i$ is a vector space over the field of size $p_i$. Observe that $|V_i|\leq p_i^{a_i}$.

A classical result of Burnside (see, for instance, \cite[Theorem 1.4]{gorenstein3}) tells us that $Aut N_i$ acts on $V_i$ with kernel, $K_i$, a $p_i$-group. Thus $G/Z$ is a subgroup of
$$K_1.\GL(V_1)\times\dots\times K_r.\GL(V_r).$$
Let $K^\dagger =(K_1\times\dots\times K_r) \cap G/Z$ and take $K$ to be the pre-image of $K^\dagger$ in $G$. Thus $G/K$ is a subgroup of
$$\GL(V_1)\times\dots\times \GL(V_r).$$

\begin{lemma}\cite[Lemma 13]{gill2}\label{L: counting}
Let $x=u^2$ and let $g$ be an involution in $G$. Then
$$\frac{|g^G|}{|g^G\cap\Galph|}=u^2-u+1.$$
\end{lemma}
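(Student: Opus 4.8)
The plan is to prove this by a double-counting argument on incident pairs of conjugates of $g$ with the points they fix. The two ingredients are the point count of $\spaceP$ and the fixed-point count supplied by Hypothesis \ref{h:basic}. Since $\spaceP$ has order $x=u^2$, it has $x^2+x+1=u^4+u^2+1$ points, and the crucial arithmetic fact I would exploit at the end is the factorization
$$u^4+u^2+1=(u^2+u+1)(u^2-u+1).$$

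First I would observe that every element of $g^G$ is again an involution, since conjugation preserves order; hence Hypothesis \ref{h:basic} applies to each $h\in g^G$, so each such $h$ fixes a Baer subplane and therefore $|\Fix(h)|=u^2+u+1$. Next I would count the set
$$S=\{(h,\beta): h\in g^G,\ \beta\in\Fix(h)\}$$
in two ways. Summing over $h$ first gives $|S|=|g^G|\cdot(u^2+u+1)$, since each conjugate fixes exactly $u^2+u+1$ points. Summing over the points $\beta$ instead gives $|S|=\sum_\beta |g^G\cap G_\beta|$, where $G_\beta$ is the stabilizer of $\beta$.

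The step requiring care is showing that $|g^G\cap G_\beta|$ is independent of $\beta$. Here I would use the transitivity of $G$ on the points: for any point $\beta$ there is $t\in G$ with $G_\beta=(\Galph)^t$, and since $g^G$ is invariant under conjugation we get $g^G\cap G_\beta=(g^G\cap\Galph)^t$, so $|g^G\cap G_\beta|=|g^G\cap\Galph|$. Consequently the second count becomes $(u^4+u^2+1)\cdot|g^G\cap\Galph|$, the number of points times this common value. Equating the two expressions for $|S|$ yields
$$|g^G|\,(u^2+u+1)=(u^4+u^2+1)\,|g^G\cap\Galph|,$$
and dividing through while applying the factorization above gives the claimed ratio $u^2-u+1$. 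No genuinely hard step arises; the only points demanding attention are verifying that the fixed-point count of Hypothesis \ref{h:basic} transfers to every conjugate of $g$, and the transitivity argument making $|g^G\cap G_\beta|$ constant across points.
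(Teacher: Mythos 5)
Your proof is correct and is essentially the argument behind the cited result: the paper itself only quotes this lemma from \cite[Lemma 13]{gill2}, and the proof there is the same double count of pairs $(h,\beta)$ with $h\in g^G$ and $\beta\in\Fix(h)$, using $|\Fix(h)|=u^2+u+1$ from Hypothesis \ref{h:basic}, transitivity of $G$ to make $|g^G\cap G_\beta|$ constant, and the factorization $u^4+u^2+1=(u^2+u+1)(u^2-u+1)$. Your handling of the two delicate points (the fixed-point count transferring to conjugates, and the constancy across points) is exactly right, and the division is legitimate since $g$ fixes a point, so $g^G\cap\Galph\neq\emptyset$ by transitivity.
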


\begin{lemma}\label{L: divide}
Let $g$ be an involution in $G$. Then $u^2-u+1$ divides
$$|G/K:C_{G/K}(gK)|.$$
\end{lemma}
\begin{proof}
By Lemma \ref{L: counting}, $u^2-u+1$ divides $|G:C_G(g)|$. Then, by Lemma \ref{L: oddnormal},
$$|G:C_G(g)|=|K:C_K(g)|\times|G/K:C_{G/K}(gK)|.$$
Now all primes dividing $|K|$ also divide $u^2+u+1$. But $u^2+u+1$ is coprime to $u^2-u+1$ so we have our result.
\end{proof}

We wish to apply Lemma \ref{L: invcentralizer} to the group $G/K$ which is a subgroup of $\GL(V_1)\times\dots\times \GL(V_r).$ In applying Lemma \ref{L: invcentralizer} we take $H=G/K$. For $i=1,\dots, r$,
\begin{itemize}
\item $H_i=\GL(V_i)$,
 \item $L_i$ is the projection of $H$ to $\GL(V_i)\times \GL(V_{i+1})\times\dots\times \GL(V_r)$.
 \end{itemize}
 For $i=1,\dots, r-1$,
 \begin{itemize}
 \item $\psi:L_i\to L_{i+1}$ is the natural projection, and
 \item $T_i$ equals $\ker(\psi_i)$.
\end{itemize}

We can order the $V_i$ so that $|T_i|$ is odd for $i<k$ and $|T_i|$ is even for $i\geq k$ where $k$ is some integer. We adopt such an ordering; furthermore we choose such an ordering for which $k$ is as large as possible. If $k<r$ then we also wish to guarantee that
$$p_k^{a_k-1}+\dots+p_k+1< p_{k+1}^{a_{k+1}-1}+\dots+p_{k+1}+1.$$
If this is not the case then we simply swap $V_k$ and $V_{k+1}$ in our ordering. Thus we assume that the inequality holds.

Now we are dealing with an involution $g$ in $G$ but we will apply Lemma \ref{L: invcentralizer} not to $g$ but $gK$ in $G/K$. Then $g_i$ will be the projection of $gK$ onto $\GL(V_i)\times\dots\times \GL(V_r)$. Recall that $u^2+u+1=p_1^{a_1}\cdots p_r^{a_r}$ where the $p_i$ are prime numbers; furthermore $|V_i|=p_i^{b_i}$ for some $b_i\leq a_i$. Finally, in order to state the next lemma concisely, we define $T_r$ to equal $L_r$.

\begin{lemma}\label{L: centralizerbound}
There exists $j\leq k$ such that
 $$(|T_j:C_{T_j}(g_j)|, u^2-u+1)>p_j^{a_j-1}+\dots+p_j+1.$$
\end{lemma}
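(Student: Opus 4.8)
Write $N=u^2-u+1$ and, for $j\le k$, set $m_j=(|T_j:C_{T_j}(g_j)|,u^2-u+1)$ and $e_j=p_j^{a_j-1}+\dots+p_j+1=\frac{p_j^{a_j}-1}{p_j-1}$. The plan is to argue by contradiction: assume $m_j\le e_j$ for every $j\le k$ and contradict the divisibility coming from Lemmas \ref{L: divide} and \ref{L: invcentralizer}. First I record that $N$ is odd (since $u(u-1)$ is even) and that $(u^2+u+1,u^2-u+1)=1$, so $N$ is coprime to each $p_i$ and to $|K|$. By Lemma \ref{L: divide}, $N\mid|G/K:C_{G/K}(gK)|$, and by Lemma \ref{L: invcentralizer} applied to $gK$ in $G/K$,
$$|G/K:C_{G/K}(gK)|=\left(\prod_{i=1}^{k}|T_i:C_{T_i}(g_i)|\right)R,\qquad R=\frac{|(g_k)^{L_k}\cap P|}{|(g_k)^{T_k}\cap P|},$$
where $P$ is a Sylow $2$-subgroup of $T_k\le GL(V_k)=GL_{b_k}(p_k)$.

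The next step is to transfer all of $N$ into the product of centralizer indices, i.e.\ to show $N\mid\prod_{i=1}^k|T_i:C_{T_i}(g_i)|$. Granting this, a purely arithmetic observation finishes the divisibility bookkeeping: for a prime $\ell$ with $\ell^c\|N$ we have $\ell^c\mid\prod_i|T_i:C_{T_i}(g_i)|$, and since $\sum_i\min\!\big(v_\ell(|T_i:C_{T_i}(g_i)|),c\big)\ge c$ whenever $\sum_i v_\ell\ge c$ (if some term is $\ge c$ one summand already gives $c$, otherwise the sum is $\sum_i v_\ell\ge c$), it follows that $\ell^c\mid\prod_i m_i$. Running this over all $\ell\mid N$ yields $N\mid\prod_{i=1}^k m_i$, hence $\prod_{i=1}^k m_i\ge N$.

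The contradiction hypothesis then gives $\prod_{j\le k}m_j\le\prod_{j\le k}e_j\le\prod_{j=1}^r e_j$, and I bound each factor by $e_j<\frac{p_j^{a_j}}{p_j-1}$, so that $\prod_{j=1}^r e_j<\frac{u^2+u+1}{\prod_j(p_j-1)}$. By Lemma \ref{L:firstly} each $p_j$ is either $\equiv1\pmod3$, whence $p_j\ge7$ and $p_j-1\ge6$, or equals $3$ with $a_j=1$ (so $e_j=1$); since $u>2$ forces $u^2+u+1\ge13$ and $9\nmid u^2+u+1$, there is at least one $p_j\ge7$, so $\prod_j(p_j-1)\ge6$ and $\frac{u^2+u+1}{\prod_j(p_j-1)}\le\frac{u^2+u+1}{6}<u^2-u+1$ (equivalently $5u^2-7u+5>0$). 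This contradicts $\prod_{j\le k}m_j\ge N=u^2-u+1$, proving the lemma.

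The crux, and the step I expect to cost the most effort, is the claimed reduction $N\mid\prod_{i=1}^k|T_i:C_{T_i}(g_i)|$, i.e.\ showing that the fusion factor $R$ absorbs no prime divisor of $u^2-u+1$. The available leverage is exactly the remark following Lemma \ref{l: conj in sylow}, which bounds $R<2^{b_k+b_k/2-1}$, together with the parity of $N$: any common divisor of $R$ and $N$ is odd, divides $N$, and is smaller than $2^{b_k+b_k/2-1}$, which by the prime restrictions of Lemma \ref{L:firstly} severely limits it. I expect this is precisely where the ordering conventions fixed before the statement — taking $k$ maximal and arranging $p_k^{a_k-1}+\dots+1<p_{k+1}^{a_{k+1}-1}+\dots+1$ — must be used, so that the block $V_k$ carrying the fusion factor is the cheapest one and $R$ cannot interfere with the odd part $u^2-u+1$. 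Isolating this coprimality cleanly, rather than through the crude power-of-two bound alone, is the delicate part of the argument.
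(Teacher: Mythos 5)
Your skeleton (argue by contradiction, use Lemmas \ref{L: divide} and \ref{L: invcentralizer} to factor the index, apply the gcd super-multiplicativity $\min(c,\sum a_i)\le\sum\min(c,a_i)$, and close with $\prod_{j=1}^r e_j<\tfrac{u^2+u+1}{6}<u^2-u+1$) matches the paper's, but there is a genuine gap exactly at the step you flagged: the claim $u^2-u+1\mid\prod_{i=1}^k|T_i:C_{T_i}(g_i)|$, equivalently $(u^2-u+1,R)=1$, is never established, and the paper neither proves nor needs it. Your guess about the role of the ordering conventions is also off target: $R$ is an integer (it counts the $T_k$-classes fused into $(g_k)^{L_k}$) and nothing in the setup prevents it from carrying odd prime divisors of $u^2-u+1$; the orderings do not force coprimality. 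What the paper actually does is split into two cases. If $k=r$ then $T_r=L_r$, so $R=1$ and the transfer is trivial; one gets $u^2-u+1\le\prod_{i=1}^r e_i<\tfrac{u^2+u+1}{2^r}$, a contradiction. If $k<r$, the gcd is \emph{bounded}, not eliminated: by Lemma \ref{l: conj in sylow} (via the remark following it), $(u^2-u+1,R)\le 2^{a_k+\frac12 a_k-1}$, and since the gcd is an odd integer below this bound it is at most $e_k=p_k^{a_k-1}+\dots+p_k+1$; hence $u^2-u+1\le\bigl(\prod_{i\le k}e_i\bigr)e_k$. This is where the ordering convention $e_k<e_{k+1}$ enters: it lets the repeated factor $e_k$ be replaced by $e_{k+1}$, giving $u^2-u+1\le\prod_{i=1}^{k+1}e_i<\tfrac{u^2+u+1}{2^{k+1}}$, again a contradiction. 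So the ordering absorbs a bounded fusion contribution as one extra factor of the product; it does not make $R$ coprime to $u^2-u+1$.

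The good news is that your argument is repairable by exactly this device, and your arithmetic already has the slack to accommodate it: once you know $(u^2-u+1,R)\le e_{k+1}$ for $k<r$ (and $R=1$ for $k=r$), the product you must dominate runs over at most $k+1\le r$ distinct indices, so your own estimate $\prod_{j=1}^r e_j<\tfrac{u^2+u+1}{6}<u^2-u+1$ still yields the contradiction unchanged. But as submitted, the central divisibility claim is asserted rather than proven, and the mechanism you propose for proving it (exact coprimality of $R$ with $u^2-u+1$, extracted from the ordering) is not available; the proposal is therefore incomplete at its crux.
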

\begin{proof}
We suppose that the proposition does not hold and seek a contradiction. In other words we suppose that, for all $i\leq k$,
$$(|T_i:C_{T_i}(g_i)|, u^2-u+1)\leq p_i^{a_i-1}+\dots+p_i+1$$

If $k=r$, then $T_r=L_r$ and
Lemmas \ref{L: invcentralizer} and \ref{L: divide} imply that $u^2-u+1$ divides
 \[\prod_{i=1}^{r}|T_i:C_{T_i}(g_i)|.\]
By our supposition this implies that
$$u^2-u+1\leq\prod_{i=1}^r(p_i^{a_i-1}+\dots+p_i+1)<\frac{u^2+u+1}{2^r}.$$
This is clearly a contradiction.

Now suppose that $k<r$. Since $P$ is a Sylow $2$-subgroup of $T_k$ which is isomorphic to a subgroup of $\GL(V_k),$ Lemma \ref{l: conj in sylow} implies that
$$\left(u^2-u+1, \frac{|(g_k)^{L_k}\cap P|}{|(g_k)^{T_k}\cap P|}\right)<3.04^{a_k}.$$
Now $3.04^k< p_k^{a_k-1}+\dots+p_k+1$ except when
\[
 (P_k,a_k) \in \{(3,1), (7,1), (7,2)\}
\]
Thus, ignoring these exceptions, our supposition implies that
$$u^2-u+1\leq\left(\prod_{i=1}^{k}(p_i^{a_i-1}+\dots+p_i+1)\right)(p_k^{a_k-1}+\dots+p_k+1).$$
Now recall that we have chosen an ordering such that $p_k^{a_k-1}+\dots+p_k+1<p_{k+1}^{a_{k+1}-1}+\dots+p_{k+1}+1$. This implies that
$$u^2-u+1\leq\prod_{i=1}^{k+1}(p_i^{a_i-1}+\dots+p_i+1)<\frac{u^2+u+1}{2^{k+1}}$$
and, once again, we have a contradiction. 

We must deal with the exceptions. When $a_k=1$, $\GL(V_k)$ is cyclic and so $|(g_k)^{L_k}\cap P|=1$ and the result follows immediately. When $(p_k, a_k)=(7,2)$, one simply uses the fact that 
\[
(u^2-u+1,|\GL(V_k)|)\leq 3 < p_k+1
\]
and we are done.
\end{proof}




\begin{proposition}\label{prop:implication}
Hypothesis 1 and Lemma A imply that Hypothesis 2 is false. 
\end{proposition}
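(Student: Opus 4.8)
The plan is to assume Hypotheses 1 and 2 and derive a contradiction with Lemma \ref{L: centralizerbound}, which asserts that for \emph{every} involution $g\in G$ some index $j\le k$ satisfies $(|T_j:C_{T_j}(g_j)|,u^2-u+1)>p_j^{a_j-1}+\dots+p_j+1$. I would construct a single involution $g\in G$ for which the opposite bound $(|T_j:C_{T_j}(g_j)|,u^2-u+1)\le p_j^{a_j-1}+\dots+p_j+1$ holds for \emph{all} $j\le k$ at once. Two arithmetic remarks underpin the translation into the language of Lemma A. First, since each $p_i$ divides $u^2+u+1$ and $\gcd(u^2+u+1,u^2-u+1)=1$, the integer $u^2-u+1$ is coprime to every $p_i$; and by Lemma \ref{L:firstly} (and the remark following it) it equals its own $\heartsuit$-part. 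Hence for any integer $m$ one has $(m,u^2-u+1)\mid m_{p_j',\heartsuit}$, so it suffices to bound each $|T_j:C_{T_j}(g_j)|_{p_j',\heartsuit}$ by $\frac{p_j^{a_j}-1}{p_j-1}=p_j^{a_j-1}+\dots+p_j+1$.

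To build $g$ I first use that $T_k\le GL(V_k)\cong GL_{b_k}(p_k)$ has even order (by the choice of $k$), with $b_k\le a_k$. If $p_k\ge 7$ then Lemma A, applied with $q=p_k$ and $n=b_k$, yields an involution $t\in T_k$ with $|T_k:C_{T_k}(t)|_{p_k',\heartsuit}\le\frac{p_k^{b_k}-1}{p_k-1}\le\frac{p_k^{a_k}-1}{p_k-1}$; the only other possibility, $p_k^{a_k}=3$ (whence $T_k\le GL_1(3)$), is checked by hand. Since $t\in T_k=\ker(L_k\to L_{k+1})$ I then promote $t$ to a genuine involution $g\in G$ with $g_k=t$ and $g_{k+1}=1$: the preimage of $t$ in $G/K$ is a coset of the odd-order group $\ker(G/K\to L_k)$, which is built from $T_1,\dots,T_{k-1}$, and $K$ itself has odd order, so two applications of Schur--Zassenhaus (conjugacy of complements to a normal Hall subgroup) produce an involution in the required coset.

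The crux, and the step I expect to be hardest, is the odd factors $j<k$: here $T_j$ contains no involution, so Lemma A cannot be applied to $T_j$ directly, yet the component $g_j$ of the fixed global involution is forced upon us and need not be trivial. I would resolve this by noting that the conjugation action of $g_j$ on $T_j\le GL(V_j)$ agrees with that of the $GL(V_j)$-component $h_j$ of $g$, which is an iinvolution, so $C_{T_j}(g_j)=C_{T_j}(h_j)$. If $h_j=1$ the index is $1$. Otherwise $\hat H_j:=\langle T_j,h_j\rangle=T_j.2$ has even order, all of whose involutions lie in the coset $T_jh_j$ and are mutually $T_j$-conjugate (Schur--Zassenhaus once more). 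Applying Lemma A to $\hat H_j\le GL_{b_j}(p_j)$ produces an involution with $(p_j',\heartsuit)$-centralizer-index at most $\frac{p_j^{b_j}-1}{p_j-1}$; as that involution is $T_j$-conjugate to $h_j$, and as an easy order count gives $|\hat H_j:C_{\hat H_j}(h_j)|=|T_j:C_{T_j}(h_j)|$, the same bound passes to $h_j$. (When $p_j=3$ one has $T_j=1$, so nothing is needed.) The essential device is that enlarging $T_j$ by the forced involution $h_j$ converts Lemma A's purely existential output into a statement about the specific component we are handed.

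Assembling the factors, every $j\le k$ satisfies $(|T_j:C_{T_j}(g_j)|,u^2-u+1)\le p_j^{a_j-1}+\dots+p_j+1$, in direct conflict with Lemma \ref{L: centralizerbound}. This contradiction shows that Hypotheses 1 and 2 cannot hold together; that is, Hypothesis 1 together with Lemma A forces Hypothesis 2 to fail.
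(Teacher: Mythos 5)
Your proposal is correct and follows essentially the same route as the paper's proof: apply Lemma A to the even-order kernel $T_k$, lift the resulting involution through the odd-order kernels $T_1,\dots,T_{k-1}$ and the odd-order group $K$ to an involution of $G$, treat each odd factor $T_j$ with $j<k$ by adjoining the induced iinvolution and using that $T_j.2$ has a single conjugacy class of involutions so that Lemma A's existential conclusion transfers to the forced component, and then contradict Lemma \ref{L: centralizerbound}. The only differences are expository: you spell out details the paper leaves terse, namely the distinction between $g_j$ and its $GL(V_j)$-component $h_j$, the index equality $|\langle T_j,h_j\rangle:C_{\langle T_j,h_j\rangle}(h_j)|=|T_j:C_{T_j}(h_j)|$, the coprimality argument showing $(m,u^2-u+1)$ divides $m_{p_j',\heartsuit}$, and the $p_j=3$ edge case.
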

\begin{proof}
Suppose Hypotheses 1 and 2 are true. We show that Lemma A leads to a contradiction. 
Define $K, V_i, L_i, T_i$ as in the lead up to Lemma~\ref{L: centralizerbound}.
In particular $G/K$ is a subgroup of $\GL(V_1)\times \dots\times \GL(V_r)$, where we order the $V_i$ as in Lemma \ref{L: centralizerbound} and recall that $|V_i|=p_i^{b_i}$ where $b_i\leq a_i$. 

We first apply Lemma A to $T_k$ which can be thought of as a subgroup of $\GL(V_k)$. Thus let $g_k$ be an involution in $T_k$ such that $|T_k:C_{T_k}(g_k)|_{p_k'}$ is minimised. If $p_k=3$, then $a_k=1$ and $|T_k:C_{T_k}(g_k)|_{p_k'}=1$. If $p_k>3$, then Lemma A implies that
$$|T_k:C_{T_k}(g_k)|_{p_k'}\leq p_k^{b_k-1}+\dots+p_k+1\leq p_k^{a_k-1}+\dots+p_k+1.$$
Now, by Lemma \ref{L:firstly} (and the comment immediately after), this implies that
$$(|T_k:C_{T_k}(g_k)|, u^2-u+1)\leq |T_k:C_{T_k}(g_k)|_{p_k'}\leq p_k^{a_k-1}+\dots+p_k+1.$$

Let $gK$ be an involutory pre-image of $g_k$ in $G/K$ (such a pre-image must exist since $|T_i|$ is odd for $i<k$); as before take $g_i$ to be the projection of $gK$ onto $\GL(V_i)\times \dots\times \GL(V_r)$. Observe that, in all cases $\langle T_j, g_j\rangle$ is isomorphic to a subgroup of $\GL(V_j)$.

Now, when $j<k$, $T_j$ has odd order. Then $H_j=\langle T_j, g_j\rangle$ has a unique $H_j$-conjugacy class of involutions. Thus Lemma A implies that, for all $j<k$ where $p_j\neq 3$,
$$(|T_j:C_{T_j}(g_j)|, u^2-u+1)\leq p_j^{a_j-1}+\dots+p_j+1.$$
Indeed the same bound holds when $p_j=3$ since in this case $a_j=1$. This yields a contradiction to Lemma \ref{L: centralizerbound}. 

Thus we have demonstrated that, provided Lemma A is true, our Hypotheses 1 and 2 lead to a contradiction. 
\end{proof}

\begin{corollary}
Lemma A implies Theorem A.
\end{corollary}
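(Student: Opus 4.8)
The plan is a short proof by contradiction that simply assembles the pieces prepared in this subsection. Suppose the conclusion of Theorem A fails, so that $m_2(G)>1$. Then $G$ contains an elementary-abelian $2$-subgroup of rank at least two, and in particular an involution. Because $\spaceP$ is non-Desarguesian and $G$ is transitive on its points, the results of \cite{wagner} and \cite[4.1.7]{dembov} guarantee that Hypothesis \ref{h:basic} holds in full — every involution is Baer, $x=u^2$ with $u>2$, and hence $x>4$. This is the standing ``Hypothesis 1''.

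Next I would apply Lemma \ref{L: threepossibilities}. Since $G$ contains an involution, one of its two alternatives must hold; the first, $m_2(G)=1$, is excluded by our supposition, so the second holds, namely that every prime dividing $|F(G)|$ also divides $u^2+u+1$. Combined with the presence of an involution in $G$, this is precisely the content of Hypothesis \ref{h:main}, i.e. ``Hypothesis 2''. Thus, under the assumption $m_2(G)>1$, both Hypotheses 1 and 2 hold simultaneously.

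Finally I would invoke Proposition \ref{prop:implication}, which asserts that Hypothesis 1 together with Lemma A forces Hypothesis 2 to be false. As Lemma A is assumed throughout, this contradicts the simultaneous validity of Hypotheses 1 and 2 established in the previous paragraph. Hence the supposition $m_2(G)>1$ is untenable, so $m_2(G)\leq 1$, which is exactly the assertion of Theorem A.

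There is no genuine obstacle at this stage: all of the quantitative work has already been front-loaded into Lemma \ref{L: centralizerbound} and Proposition \ref{prop:implication}. The only matters needing a little care are the bookkeeping identification of the two numbered hypotheses with the two displayed hypothesis environments, and the verification that the cited transitivity results really do supply Hypothesis \ref{h:basic} in its entirety — including the bound $x>4$ and the Baer condition — whenever $G$ acts transitively on a finite non-Desarguesian plane and contains an involution.
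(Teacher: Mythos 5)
Your proof is correct and follows exactly the paper's route: the paper disposes of this corollary in one line as ``immediate from Lemma \ref{L: threepossibilities} and Proposition \ref{prop:implication}'', and your argument simply fills in the same contradiction---$m_2(G)>1$ gives an involution, Lemma \ref{L: threepossibilities} then forces Hypothesis \ref{h:main}, which Proposition \ref{prop:implication} (given Lemma A) rules out. Your added care about Hypothesis \ref{h:basic} being supplied by \cite{wagner} and \cite[4.1.7]{dembov} is exactly the justification the paper itself gives when introducing that hypothesis, so nothing is missing.
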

\begin{proof}
This is immediate from Lemma \ref{L: threepossibilities} and Proposition \ref{prop:implication}.
\end{proof}

\section{Proving Lemma A}\label{S: progress}

Throughout this section we occupy ourselves with a proof of Lemma A. Lemma A is a purely group theoretic result; we will not refer to projective planes in this section. We use standard group theory notation, as described at the start of Subsection \ref{S: background}. Note that Lemma A was stated on p.\pageref{la}.

Throughout this section $q=p^a$ with $p\geq 7$ and $H$ is a subgroup of $\GL_n(q)$. Suppose that $H$ lies in a maximal subgroup $M$ of $\GL_n(q)$. In order to prove Lemma A we will go through the possibilities for $M$ and $H$ and demonstrate that, in all cases, Lemma A holds. 

\subsection{The maximal subgroups of $\GL_n(q)$}

Before we embark on a proof of Lemma A we need to outline a classification of the maximal subgroups of $G=\GL_n(q)$. Such a classification was outlined in \cite{aschbacher2}; we follow the treatment of this result given in \cite{kl}. We take $V$ to be an $n$-dimensional vector space over $\Fq$. Let $\kappa$ be a sesquilinear (resp. quadratic) form defined over $V$. We will assume that $\kappa$ is either non-degenerate, or else is equal to the zero form. Define
\begin{equation*}
\begin{aligned}
\Gamma &= \{g\in \Gamma L(V) \, | \, \kappa (vg) = \lambda(g)\kappa(v)^{\alpha(g)} \text{ for all } v\in V\}; \\
\Delta &= \{g\in \GL(V) \, | \, \kappa (vg) = \lambda(g)\kappa(v) \text{ for all } v\in V\}; \\
S &= \{g\in SL(V) \, | \, \kappa (vg) = \kappa(v) \text{ for all } v\in V\};
\end{aligned}
\end{equation*}
where $\lambda(g)\in \Fq^*$ and $\alpha(g)\in Aut(\Fq)$. In other words $\Gamma$ (resp. $\Delta$, $S$) is the set of {\it semisimilarities} (resp. {\it similarities}, {\it special isometries}) of $\kappa$. We record the groups we get for various choices of $\kappa$ \cite[Tables 2.1.B and 2.1.D]{kl}:
\begin{center}
\begin{tabular}{|c|c|c|c|}
 \hline
$\kappa$ & $\Gamma$ & $\Delta$ & $S$ \\
\hline
Zero & $\Gamma L_n(q)$ & $\GL_n(q)$ & $\SL_n(q)$ \\
Hermitian & $\Gamma U_n(q)$ & $GU_n(q).(q-1)$ & $SU_n(q)$ \\
Alternating & $\Gamma Sp_n(q)$ & $GSp_n(q)$ & $Sp_n(q)$ \\
Orthogonal & $\Gamma O^\epsilon_n(q)$ & $GO_n^\epsilon(q)$ & $SO_n^\epsilon(q)$ \\
\hline
\end{tabular}
\end{center}

Here $\epsilon$ gives the type of the orthogonal space; it takes the value $\pm$ when $n$ is even, and is blank when $n$ is odd. The group $\Gamma$ contains the scalars as a normal subgroup, which we denote $Z$. For $H\leq \Gamma$ we write $\overline{H}$ for reduction modulo scalars: $\overline{H}=H/(H\cap Z)$.

\begin{lemma}\label{l: when omega is simple}\cite[Prop. 2.9.2]{kl}
$\overline{S}$ is simple except in the following cases:
\begin{enumerate}
 \item $n=1$ or $(n,q)\in\{(2,2), (2,3)\}$, or $S=SU(3,2)$, $Sp(4,2)$, $SO_2^{\pm}(q)$, $SO_3(3)$, or $SO_4^+(q)$.
\item $\kappa$ is quadratic, and (1) does not hold. In this case $\overline{S}$ contains a simple subgroup of index at most $2$.
\end{enumerate}
\end{lemma}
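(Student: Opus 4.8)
The plan is to prove simplicity of $\overline{S}$ by Iwasawa's criterion, and to dispose of the finitely many degenerate configurations by direct inspection. Recall Iwasawa's lemma: if a perfect group $G$ acts faithfully and primitively on a set $\Omega$, and if the stabiliser $G_\alpha$ of a point possesses an abelian normal subgroup $A$ whose $G$-conjugates generate $G$, then $G$ is simple. I would apply this with $G=\overline{S}$ and with $\Omega$ the natural geometry attached to $\kappa$: the set of projective points of $V$ (i.e. $1$-spaces) for the zero form, and the set of totally singular (isotropic) $1$-spaces in the Hermitian, alternating and symmetric cases. Faithfulness of $\overline{S}$ on $\Omega$ is automatic, since the kernel of the action of $S$ on the relevant points is exactly the group of scalars $S\cap Z$.

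First I would verify the three hypotheses of Iwasawa's lemma in the generic range. \emph{Primitivity}: the stabiliser in $S$ of a point of $\Omega$ is a maximal parabolic subgroup, so the action is primitive; in the form cases one first invokes Witt's theorem to get transitivity of $S$ on $\Omega$ and maximality of the point stabiliser. \emph{The abelian normal subgroup}: take $A$ to be the appropriate long root subgroup lying inside the stabiliser --- the group of transvections with a fixed centre in the linear case, and the centre of the unipotent radical (a group of root/Siegel transformations) in the form cases. These subgroups are elementary abelian and normal in the stabiliser, and their $S$-conjugates generate $S$ because $S$ is generated by its transvections, respectively by its root elements; this is the standard generation theorem for each family. \emph{Perfectness}: outside the listed exceptions one checks $S=[S,S]$ directly from the Chevalley commutator relations among the root elements. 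Granting these, Iwasawa's lemma yields that $\overline{S}$ is simple.

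The symmetric-form case requires the one genuine modification that produces clause (2). Here the root (Siegel) transformations generate not all of $SO_n^\epsilon(q)$ but the kernel $\Omega_n^\epsilon(q)$ of the spinor norm, which has index at most $2$ in $SO$: the spinor norm gives a homomorphism $SO_n^\epsilon(q)\to \Fq^*/(\Fq^*)^2$ whose image is generically of order $2$. I would therefore run the Iwasawa argument with $G=\overline{\Omega_n^\epsilon(q)}=P\Omega_n^\epsilon(q)$ rather than with $\overline{SO}$, concluding that $P\Omega_n^\epsilon(q)$ is simple and sits inside $\overline{S}=\overline{SO_n^\epsilon(q)}$ with index at most $2$. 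This is precisely the content of clause (2).

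Finally I would clear the exceptional list of clause (1) by hand, since in each case exactly one of the three Iwasawa hypotheses fails. When $n=1$ the group is trivial; for $(n,q)\in\{(2,2),(2,3)\}$ the group $SL_2(q)$ fails to be perfect (indeed $SL_2(2)\cong S_3$, while $[SL_2(3),SL_2(3)]\cong Q_8$ has index $3$); $SU_3(2)$ modulo scalars is soluble of order $72$; $Sp_4(2)\cong S_6$ is non-simple with simple part $A_6$; and the degenerate orthogonal groups $SO_2^{\pm}(q)$ (cyclic or dihedral), $SO_3(3)$ (with $\Omega_3(3)\cong PSL_2(3)\cong A_4$), and $SO_4^+(q)$ (where $\Omega_4^+(q)$ is a central product of two copies of $SL_2(q)$, so $P\Omega_4^+(q)\cong PSL_2(q)\times PSL_2(q)$ is a proper direct product) either have reducible geometry or too few isotropic points to make the action primitive. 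I expect the main obstacle to be the orthogonal analysis: correctly pinning down the spinor-norm mechanism that separates $\Omega$ from $SO$, checking that the Siegel transformations both lie in $\Omega$ and generate it, and simultaneously isolating exactly the low-dimensional orthogonal cases in which this argument collapses.
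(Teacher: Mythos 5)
The paper gives no proof of this lemma at all: it is quoted directly from Kleidman--Liebeck \cite[Prop.\ 2.9.2]{kl}, and your Iwasawa-criterion argument is exactly the standard proof that underlies that citation (it is the proof one finds in Taylor or Dieudonn\'e). In outline it is correct: transitivity on singular points via Witt's theorem, maximality of the parabolic point stabiliser, the long root subgroup (transvections, resp.\ Siegel--Eichler transformations) as the abelian normal subgroup, generation by root elements, perfectness outside the listed cases, and the spinor norm producing clause (2) in the symmetric case. Three caveats are worth recording. First, the lemma as stated covers even $q$ (it lists $Sp(4,2)$, $SU(3,2)$ and $(n,q)=(2,2)$), and there your spinor-norm mechanism degenerates, since $\Fq^*/(\Fq^*)^2$ is trivial; in characteristic $2$ one has $SO=O$ and must define $\Omega$ as the kernel of the Dickson invariant $O_n^\epsilon(q)\to \Z/2\Z$ instead. (For the paper's application this is immaterial, as only $p\geq 7$ is ever used.) Second, faithfulness is not quite ``automatic'' in the unitary and orthogonal cases: that the kernel of the action on singular $1$-spaces is exactly $S\cap Z$ needs an argument, using that the singular points span $V$ and a little more; this is precisely where $SO_2^\pm(q)$, with no or too few singular points, genuinely collapses. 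Third, since in the orthogonal case you run Iwasawa on $P\Omega_n^\epsilon(q)$, the perfectness you need is that of $\Omega_n^\epsilon(q)$, not of $SO$; this holds for $n\geq 5$, and the low-dimensional cases are controlled by $\Omega_3(q)\cong PSL_2(q)$ and $P\Omega_4^-(q)\cong PSL_2(q^2)$. On this last point your diagnosis of the exceptions is slightly off: $SO_3(3)$ fails because $\Omega_3(3)\cong A_4$ is not perfect, not for geometric reasons, and $SO_4^+(q)$ fails primitivity because the hyperbolic quadric is a grid $PG(1,q)\times PG(1,q)$ whose two rulings make the point stabiliser in $\Omega_4^+(q)$ non-maximal --- there is no shortage of singular points there. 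None of these corrections affects the soundness of your overall plan.
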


We will call situation (1) the {\it small rank setting}. Thus the small rank setting is described by particular values of $(n,q)$, and particular types of $\kappa$ (strictly speaking, since $p\geq 7$, most of the listed cases do not occur). When we are not in the small rank setting, we write $\Omega$ (or $\Omega_\kappa$) for the subgroup of $S$ of index at most $2$ such that $\overline{\Omega}$ is simple. Now \cite[Prop. 2.9.2]{kl} implies that $\Omega$ is quasisimple.

For particular choices of $\kappa, n$ and $q$ we are interested in the maximal subgroups of $X$, a group satisfying $\Omega\leq X \leq \Gamma$; most of the time we will apply the following results with $\kappa$ the zero form and $X=\Delta_\kappa\cong \GL_n(q)$.

\begin{lemma}\label{l: easy}
Suppose we are not in the small rank setting for $(\kappa,n,q)$. Let $X$ be a group satisfying $\Omega\leq X\leq \Gamma$.  If $M$ is a maximal subgroup of $X$ then either $M$ contains $\Omega$ or $\overline{M}$ is a maximal subgroup of $\overline{X}$. Conversely, if $M_1$ is a maximal subgroup of $\overline{X}$ then $M_1 = \overline{M}$ for some maximal subgroup $M$ in $X$.
 \end{lemma}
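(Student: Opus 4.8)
The plan is to exploit the correspondence theorem between $X$ and $\overline{X}=X/Z_X$, where $Z_X:=X\cap Z$ denotes the scalars lying in $X$, splitting the argument according to whether or not $M$ contains $Z_X$. Write $\pi\colon X\to\overline{X}$ for the quotient map. Throughout I use two structural facts: first, $\Omega$ is characteristic in $S\lhd\Gamma$ and hence $\Omega\lhd X$; and second, since we are not in the small rank situe (and $p\geq 7$), $\overline{\Omega}$ is a non-abelian simple group, so $\Omega$ is perfect, i.e. $\Omega=[\Omega,\Omega]$.

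For the first assertion, let $M$ be maximal in $X$. If $Z_X\leq M$, then $M$ is a maximal subgroup of $X$ containing $\ker\pi=Z_X$, so by the correspondence theorem $\overline{M}=M/Z_X$ is maximal in $\overline{X}$; this is the second alternative. Suppose instead $Z_X\not\leq M$. Since $Z_X\lhd X$, the set $MZ_X$ is a subgroup, and $Z_X\not\leq M$ forces $M<MZ_X$; maximality then gives $MZ_X=X$. I claim $\Omega\leq M$, which is the first alternative. Indeed, $\Omega\leq X=MZ_X$, and since every element of $\Omega$ and every scalar is $\Fq$-linear, each $a\in\Omega$ can be written $a=mz$ with $m\in M\cap GL(V)$ and $z\in Z_X$. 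For $a=m_1z_1$ and $b=m_2z_2$ in $\Omega$ the scalars $z_1,z_2$ are central in $GL(V)$ and so cancel in the commutator, giving $[a,b]=[m_1,m_2]\in M\cap\Omega$. Hence $[\Omega,\Omega]\leq M$, and as $\Omega$ is perfect we conclude $\Omega=[\Omega,\Omega]\leq M$, as desired.

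For the converse, given a maximal subgroup $M_1$ of $\overline{X}$, set $M=\pi^{-1}(M_1)$. Then $M$ contains $\ker\pi=Z_X$ and $M/Z_X=M_1$ is maximal, so by the correspondence theorem $M$ is maximal in $X$. Moreover $M\cap Z=M\cap(X\cap Z)=M\cap Z_X=Z_X$, whence $\overline{M}=M/Z_X=M_1$, completing the proof.

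The only delicate point is the case $Z_X\not\leq M$ of the first assertion, and this is where avoidance of the small rank situe is essential. The argument there rests on the two facts guaranteed precisely by that hypothesis (together with $p\geq 7$): that $\overline{\Omega}$ is simple, so $\Omega$ is perfect, and that elements of $\Omega$ are genuine linear maps, so that scalars—central in $GL(V)$—disappear from commutators. I expect the main (routine) obstacle to be confirming perfectness of $\Omega$ across each relevant family ($SL$, $SU$, $Sp$, $\Omega^\epsilon$), which follows directly from the exclusions recorded in Lemma \ref{l: when omega is simple}.
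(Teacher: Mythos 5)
Your proof is correct, and in the one non-trivial case it takes a genuinely different route from the paper. Both arguments handle the converse and the case $Z_X\leq M$ identically, via the correspondence theorem (the paper phrases the dichotomy on the quotient side: either $\overline{M}$ is maximal in $\overline{X}$ or $\overline{M}\cong\overline{X}$, which is exactly your dichotomy $Z_X\leq M$ versus $MZ_X=X$). Where you diverge is the case $MZ_X=X$: the paper extracts a perfect subgroup $M_0\leq M$ with $|M_0|\geq|\overline{\Omega}|$, argues that $M_0$ either contains $\Omega$ or meets it trivially, and kills the second alternative by the numerical comparison $|\overline{\Omega}|>|\Gamma:\Omega|$, quoting the order tables in \cite[Tables 2.1.C and 2.1.D]{kl}. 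You instead exploit the specific nature of the kernel: since $Z_X$ consists of scalars, which are central in $GL(V)$ even though not in $\Gamma$, every commutator of elements of $\Omega$ rewrites as a commutator of elements of $M\cap GL(V)$ once $X=MZ_X$, so $[\Omega,\Omega]\leq M$ and perfectness of $\Omega$ finishes. Your version is more elementary and self-contained --- no order estimates, no tables --- and it correctly sidesteps the trap that $Z_X$ need not be central in $X$ (semilinear elements do not centralize scalars), by the observation that the factors $m=az^{-1}$ with $a\in\Omega$ are automatically linear, hence commute with scalars. What the paper's counting argument buys in exchange is robustness: it would survive in a setting where the kernel of the reduction were not made of elements central in the linear group, at the price of the external order bounds.

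One wording caveat: ``$\overline{\Omega}$ is non-abelian simple, so $\Omega$ is perfect'' is not a valid implication in the abstract --- $C_2\times A_5$ modulo its centre is simple, yet the group is not perfect. What you actually need is that $\Omega$ is quasisimple outside the small rank situe, which is precisely the content of \cite[Proposition 2.9.2]{kl} cited in Lemma \ref{l: when omega is simple}, and which the paper's own proof also assumes without comment (``Since $M_0$ is perfect and $\Omega$ is perfect\dots''). Since you explicitly flag this verification as the remaining routine step, this is a cosmetic slip in the justification, not a gap in the argument.
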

\begin{proof}
The converse statement is immediate: take $M$ to be the full pre-image of $M_1$ in $X$. Then $M$ must be a proper subgroup of $X$ and is clearly maximal.

Now suppose that $M$ is maximal in $X$. Then $\overline{M}$ must be a maximal subgroup of $\overline{X}$ or else $\overline{M}\cong \overline{X}$. Suppose the latter case holds. Then $M$ contains a perfect subgroup $M_0$, which has index at most $|X:\Omega|$ in $X$. Since $M_0$ is perfect and $\Omega$ is quasisimple we conclude that $M_0$ contains $\Omega$ or else $M_0\cap \Omega\leq Z(\Omega)$. But $|M_0|\geq \overline{\Omega}>|\Gamma:\Omega|\cdot |Z(\Omega)|$ (see \cite[Tables 2.1.C and 2.1.D]{kl}) and so we conclude that $M_0$ contains $\Omega$.
\end{proof}

The maximal subgroups of $\overline{X}$ are described in \cite[Theorem 1.2.1]{kl}. In order to describe them we start with a family $\curly(\Gamma)$ of subgroups of $\Gamma$ (we will describe these in due course). We define
\begin{equation*}
 \begin{aligned}
  \curly(X) &= \{X\cap H \, | \, H\in \curly(\Gamma)\}; \\
\curly(\overline{X}) &= \{\overline{H} \, | \, H\in\curly(X)\}.
 \end{aligned}
\end{equation*}

Now we have the following result, originally due to Aschbacher \cite{aschbacher2}:
\begin{theorem}\cite[Theorem 1.2.1]{kl}\label{t: pglmax}
Suppose we are not in the small rank setting for $(\kappa,n,q)$. Let $X$ be a group satisfying $\Omega\leq X\leq \Gamma$. If $H$ is a subgroup of $\overline{X}$ then either $H$ is contained in a member of $\curly(\overline{X})$, or else $H$ is almost simple with socle $S$ such that if $L$ is the full covering group of $S$, and if $\rho: L\to \GL(V)$ is a representation of $L$ such that $\overline{\rho(L)}=S$, then $\rho$ is absolutely irreducible. 
\end{theorem}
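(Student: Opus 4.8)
The statement is Aschbacher's subgroup theorem, and the plan is to reproduce Aschbacher's original strategy: analyse a subgroup $H$ through the geometry it induces on the natural module $V$, using Clifford theory together with the structure of the generalized Fitting subgroup $F^*(H)$. By Lemma \ref{l: easy} it is harmless to work one level up, with a subgroup of $X$ (or of $\Gamma$) acting on $V$, and to reduce modulo scalars only at the very end; so throughout I would treat $H$ as a linear group on $V$ and ask, successively, whether it stabilises one of the geometric configurations whose stabilisers make up the family $\curly(\Gamma)$.

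The routine reductions come first. If $H$ fixes a proper nonzero subspace of $V$ then it lies in a $\curly_1$-member, so assume $H$ is irreducible. If $H$ transitively permutes a nontrivial direct-sum decomposition $V=V_1\oplus\dots\oplus V_t$ then it lies in a $\curly_2$-member, so assume $H$ is primitive. If $\mathrm{End}_{\Fq H}(V)$ is strictly larger than $\Fq$ (equivalently $H$ is not absolutely irreducible, or $V$ carries a compatible structure over a proper extension field on which $H$ acts semilinearly) then $H$ lies in a $\curly_3$-member, so assume $H$ is absolutely irreducible with $\Fq$ minimal. Two further structural tests dispose of the remaining geometric classes away from $F^*$: if $H$ is conjugate into a group written over a proper subfield it lies in $\curly_5$, and if $H$ preserves a nonzero sesquilinear or quadratic form not proportional to $\kappa$ it lies in $\curly_8$. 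Setting these aside, we may assume $H$ is absolutely irreducible, primitive, defined over no smaller field, and preserving no extra form.

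The substance is then the analysis of $N:=F^*(H)$. By Clifford theory every normal subgroup of $H$ acts homogeneously on $V$, and primitivity sharply constrains this. Any noncentral normal $r$-subgroup must, after these reductions, be of symplectic type and act absolutely irreducibly, forcing $H$ into a $\curly_6$-member. If $N$ induces a nontrivial tensor factorisation $V=V_1\otimes V_2$ respected by $H$ we obtain a $\curly_4$-member, and if the components of $E(H)$ are permuted nontrivially and thereby induce a tensor-induced decomposition we obtain a $\curly_7$-member. When none of these occur, the constraints force $N=E(H)$ to be a single quasisimple group $L_0$ with $C_H(N)$ scalar; writing $S=\overline{L_0}$ (simple by Lemma \ref{l: when omega is simple}), the group $H$ is then almost simple with socle $S$, and the natural action realises an absolutely irreducible representation of the covering group of $S$. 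Reducing modulo scalars via Lemma \ref{l: easy} converts this dichotomy into the stated alternative for subgroups of $\overline{X}$.

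The main obstacle is exactly this last step: showing that a primitive, absolutely irreducible $H$ with no noncentral symplectic-type normal $r$-subgroup and no $H$-compatible tensor or tensor-induced decomposition must have $F^*(H)$ equal to a single component. Ruling out all the ``mixed'' configurations of $F^*(H)$ --- central products of an $r$-group with several components, or several components permuted in ways that do not fit the $\curly_7$ pattern --- is where the genuine work of \cite{aschbacher2} lies, and it rests on a careful description of how tensor decompositions of $V$ are forced by central products inside $F^*(H)$. Since a complete argument is long and well documented, in the paper itself I would simply invoke \cite[Theorem 1.2.1]{kl}.
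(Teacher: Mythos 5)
Your proposal is correct: the paper gives no proof of this statement, simply importing it as \cite[Theorem 1.2.1]{kl}, and your outline of Aschbacher's argument ends by invoking that same citation, so you take essentially the same approach. The sketch of the reduction through the classes $\curly_1$--$\curly_8$ and the analysis of $F^*(H)$ is an accurate summary of the underlying proof, but it plays the same role here as the paper's bare citation.
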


\begin{corollary}\label{c: general delta}
Suppose we are not in the small rank setting for $(\kappa,n,q)$. Let $X$ be a group satisfying $\Omega\leq X\leq \Gamma$. If $H$ is a subgroup of $X$ then one of the following holds:
\begin{enumerate}
 \item $H$ contains $\Omega$;
\item $H$ is contained in $M_1Z$ where $M_1\in\curly(X)$;
\item $H_1\leq H \leq H_1Z$ where $H_1$ is an almost quasisimple group such that $F^*(H_1)$ is absolutely irreducible on $V$. 
\end{enumerate}
\end{corollary}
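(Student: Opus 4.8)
The plan is to derive Corollary \ref{c: general delta} directly from Theorem \ref{t: pglmax} (Aschbacher's theorem) combined with Lemma \ref{l: easy}, by a straightforward pull-back argument through the reduction-modulo-scalars map $H\mapsto\overline{H}$. The key observation is that the three alternatives in the corollary are exactly the pre-images in $X$ of the two alternatives in the theorem (with the first alternative of the theorem, ``$H$ contained in a member of $\curly(\overline{X})$'', splitting into cases 1 and 2 of the corollary once one keeps track of whether the relevant maximal subgroup contains $\Omega$). So the work is entirely in translating between $\overline{X}$ and $X$, and the scalar subgroup $Z$ records the ambiguity introduced by this translation.

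First I would take $H\leq X$ and pass to $\overline{H}\leq\overline{X}$, applying Theorem \ref{t: pglmax}. This gives two cases. In the first case $\overline{H}$ is contained in some $\overline{M_1}\in\curly(\overline{X})$, where by definition $\overline{M_1}=\overline{M_1'}$ for some $M_1'\in\curly(X)$, i.e.\ $M_1'=X\cap H'$ for some $H'\in\curly(\Gamma)$. Here I would argue: since $\overline{H}\leq\overline{M_1'}$, the full pre-image of $\overline{M_1'}$ in $X$ is $M_1'Z\cap X$, and $H$ sits inside it; writing $M_1=M_1'$ this places $H$ inside $M_1 Z$, which is alternative 2. (If it should happen that $\overline{M_1'}$ is not proper, i.e.\ contains $\overline{\Omega}$, then by the perfectness/order argument of Lemma \ref{l: easy} the group $\Omega\leq M_1'$ and one lands instead in alternative 1; I would flag this boundary case explicitly.) In the second case of the theorem, $\overline{H}$ is almost simple with socle $S$ and the full covering group acts absolutely irreducibly. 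Lifting back to $X$, the pre-image $H$ of $\overline{H}$ satisfies $H/(H\cap Z)\cong\overline{H}$, so setting $H_1$ to be a suitable quasi-simple normal-up-to-scalars subgroup (the pre-image of $\mathrm{socle}(\overline{H})=S$) gives $H_1\leq H\leq H_1 Z$ with $F^*(H_1)=S_1$ absolutely irreducible; this is alternative 3.

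The main obstacle I expect is bookkeeping rather than genuine mathematical difficulty: one must be careful that the almost-\emph{simple} conclusion of Theorem \ref{t: pglmax} (phrased in $\overline{X}$) correctly lifts to an almost-\emph{quasi-simple} conclusion in $X$, and that the absolute irreducibility statement, which is naturally a statement about the covering group $L$ and the representation $\rho$, matches the condition ``$F^*(H_1)=S_1$ is absolutely irreducible on $V$'' in the corollary. The subtlety is that $H\cap Z$ can be nontrivial, so $H_1$ is not literally the socle of $H$ but rather a quasi-simple preimage, and its generalized Fitting subgroup $F^*(H_1)=S_1$ is the relevant quasi-simple group; I would verify that $S_1$ is precisely the covering group appearing in Theorem \ref{t: pglmax} (or a quotient thereof by a central subgroup), so that the absolute irreducibility of $\rho$ transfers to absolute irreducibility of $S_1$ on $V$. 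Beyond that, I would simply note that the hypothesis $p\geq 7$ guarantees we are never in the small rank situe of Lemma \ref{l: when omega is simple} (aside from $n=1$, which is trivial), so that Lemma \ref{l: easy} and Theorem \ref{t: pglmax} apply without exceptional cases.
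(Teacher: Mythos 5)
Your overall route is exactly the paper's: pass to $\overline{H}\leq\overline{X}$, apply Theorem \ref{t: pglmax}, and pull back through reduction modulo scalars using Lemma \ref{l: easy}. The cosmetic differences (the paper first peels off the case $\Omega\leq H$ and chooses $\overline{M}$ maximal before invoking the converse half of Lemma \ref{l: easy}, whereas you pass directly to the full pre-image $M_1(X\cap Z)\leq M_1Z$) do not matter, and your treatment of the geometric case is sound. Two small blemishes there: the hypothesis ``not in the small rank situe'' is part of the corollary's statement, so there is nothing for $p\geq 7$ to guarantee at this point; and your boundary remark concludes $\Omega\leq M_1'$, which is not alternative 1 (that requires $\Omega\leq H$) --- the actual role of the Lemma \ref{l: easy} argument is to show that if $\overline{H}=\overline{X}$ then $H\geq\Omega$.

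The one genuine misstep is in your lift of the almost simple case. You set $H_1$ to be ``a suitable quasi-simple \ldots subgroup (the pre-image of $\mathrm{socle}(\overline{H})=S$)'' and claim $H_1\leq H\leq H_1Z$. This fails on two counts. First, the full pre-image of $S$ in $X$ contains $X\cap Z$, so it is neither quasi-simple nor, in general, contained in $H$. Second, and more seriously, for \emph{any} subgroup $H_1$ with $\overline{H_1}=S$ one has $\overline{H_1Z}=S$, so the containment $H\leq H_1Z$ forces $\overline{H}=S$; your construction therefore breaks down precisely when $\overline{H}$ is almost simple but not simple, i.e. when $H$ induces outer automorphisms of $S$. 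This is exactly why the corollary makes $H_1$ \emph{almost} quasi-simple with $F^*(H_1)=S_1$: the quasi-simple object is $S_1$ (one may take $S_1=H^{(\infty)}$, the perfect residual, which lies in $\Delta$ since perfect subgroups die in the abelian field-automorphism quotient, so $S_1\cap Z$ is central in $S_1$ and $\overline{S_1}=S$), while $H_1$ is a subgroup of $H$ containing $S_1$ with $H=H_1(H\cap Z)$, trimmed of excess scalars so that $F^*(H_1)=S_1$. Your ``main obstacle'' paragraph half-notices the issue ($H_1$ ``not literally the socle'') but still conflates the roles of $H_1$ and $S_1$. In fairness, the paper's own proof simply asserts ``the third possibility holds'' at this step, so your attempt at justification goes beyond the paper --- but as written it is the justification of a false statement whenever $\overline{H}\neq S$, and needs the repair above.
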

\begin{proof}
Suppose that $H$ does not contain $\Omega$. Then Lemma \ref{l: easy} implies that $\overline{H}$ is a proper subgroup of $\overline{X}$ and we apply Theorem \ref{t: pglmax}. If $\overline{H}$ does not lie in a member of $\curly(\overline{X})$ then $\overline{H}$ is almost simple, and the third possibility holds. Otherwise $\overline{H}$ lies inside a member $\overline{M}$ of $\curly(\overline{X})$. Choose $\overline{M}$ to be maximal; Lemma \ref{l: easy} implies that $H$ lies inside a maximal subgroup of $X$ equal to the full pre-image of $\overline{M}$. Now $\overline{M}=\overline{M_1}$ for some $M_1\in\curly(X)$. Thus the full pre-image of $\overline{M}$ is equal to $M_1Z$ as required.
\end{proof}

Our job now is to describe $\curly(\Gamma)$, and hence $\curly(X)$. Each of these splits into eight subfamilies: $\curly_i(\Gamma)$ (resp. $\curly_i(X)$), for $i=1,\dots, 8$. We now sketch a description of these; full descriptions (with the same terminology) can be found in \cite{kl}. We are particularly interested in the case where $X=\Delta$ hence we introduce the following notation: take $H_\Gamma\in \curly_i(\Gamma)$ and write $H=H_\Delta = H_\Gamma\cap \Delta$, for the corresponding member of $\curly_i(\Delta)$.

\begin{itemize}
 \item[$\curly_1$:] $H_\Gamma = N_\Gamma(W)$ where $W$ is a subspace of $V$ of dimension $m\leq\frac{n}{2}$ that is either non-degenerate or totally singular.

\item[$\curly_2$:] Write $n = mt$, for $t\geq 2$. $H_\Gamma$ is the stabilizer in $\Gamma$ of an {\it $m$-space decomposition}. If $\kappa=0$, then this is a decomposition $V=V_1\oplus V_2\oplus \cdots \oplus V_t$ where $V_i$ is an $m$-dimensional subspace of $V$; if $\kappa$ is non-degenerate, then one must impose extra conditions on the subspaces $V_i$. In any case $H$ is the stabilizer in $\Delta$ of this $m$-space decomposition.

\item[$\curly_3$: ] Write $n=mr$ for $r$ prime. $H_\Gamma = \Gamma_{\#,\mu}$, a large subgroup of the group $\Gamma_\mu$ corresponding to a form $\mu$ on $V$ where $V$ is viewed as a vector space over $\Fqr$ (the precise description of this subgroup is given on \cite[p. 111]{kl}). The form $\mu$ depends on $\kappa$, as described in \cite[Table 4.3.A]{kl}; for instance if $\kappa=0$ then $\mu=0$. Then $H = \Gamma_{\#,\mu}\cap \Delta$.

\item[$\curly_4$: ] Write $n=n_1n_2$ with $n_1\geq 2$, and consider $V_1\otimes V_2$, where $V_1$ (resp. $V_2$) is a vector space of dimension $n_1$ (resp. $n_2$). This induces an imbedding 
$$\GL(V_1)\otimes \GL(V_2)\cong \GL_{n_1}(q)\circ \GL_{n_2}(q)\leq \GL(V)\cong \GL_n(q).$$
We can define forms on $V_1$ and $V_2$, $\kappa_1$ and $\kappa_2$, that combine to give a form on $V_1\times V_2$; if chosen carefully this combination of forms may be identified with the form $\kappa$. Thus we induce an imbedding $\Delta_{\kappa_1}\otimes \Delta_{\kappa_2}\leq \Delta_{\kappa}$. We define $H_\Gamma = (\Delta_{\kappa_1}\otimes \Delta_{\kappa_2})\langle\phi\rangle$, where $\phi$ is a field automorphism; then $H=\Delta_{\kappa_1}\otimes \Delta_{\kappa_2}$ \cite[(4.4.13)]{kl}.

\item[$\curly_5$: ] Let $V_0$ be an $n$-dimensional vector space over $\mathbb{F}_0$, a subfield of $\Fq$. Then $V$ can be identified with $V_0\otimes \Fq$, and a form, $\kappa_0$, on $V_0$ can be extended to a form $\kappa$ on $V$. We then define $H_\Gamma=N_\Gamma(V_0)Z$; in particular $H=\Delta_0Z$ \cite[(4.5.5)]{kl}.

\item[$\curly_6$: ] Write $n=r^m$, for $r$ prime and not equal to $p$. Then $H_\Gamma=N_\Gamma(R)$ where $R$ is a particular symplectic-type $r$-subgroup of $\Delta$.

\item[$\curly_7$: ] Write $n=m^t$. The groups here are similar to those in $\curly_4$; this time we identify $V$ with a tensor product $V_1\otimes V_2\otimes\cdots\otimes V_t$ where $V_i,i=1,\dots, t$ is a vector space of dimension $m$. Then $H_\Gamma = (\Delta_{\kappa_1}\circ \cdots \circ \Delta_{\kappa_t})(\langle \phi\rangle \times J)$, where $\phi$ is a field automorphism, and $J\cong S_t$ permutes the $V_i$. We conclude that $H=(\Delta_{\kappa_1}\circ \cdots \circ \Delta_{\kappa_t})J$ \cite[p.157]{kl}.

\item[$\curly_8$: ] Here $H_\Gamma = \Gamma_\mu$ where $\mu$ is a non-degenerate form defined on $V$. Then $H=\Delta_\mu$. The form $\mu$ depends on $\kappa$, as described in \cite[Table 4.8.A]{kl}.
\end{itemize}

We record two significant corollaries.

\begin{corollary}\label{c: breakdown}
Fix $n>2$ and $q=p^a$ with $p\geq 7$, and set $\kappa$ to be the zero form. Let $X$ be a group satisfying $\Delta\leq X\leq \Gamma$. If $H$ is a subgroup of $X$ then one of the following holds:
\begin{enumerate}
\item $H$ contains $\Omega$.
\item $H$ is contained in $M_1Z$ where $M_1\in\curly_i(X)$, for $i=1,\dots,7$.
\item $H$ is contained in $M_1Z$ where $M_1\in\curly_6(X_\xi)$ for some non-degenerate form $\xi$ acting on $V$.
\item $H_1\leq H\leq  H_1Z$ where $H_1$ is almost quasisimple and $F^*(H_1)$ is absolutely irreducible on $V$.
\item $H$ lies in $X_\xi$, for some non-degenerate form $\xi$ acting on $V$ and $(\xi, n,q)$ lies in the small rank setting.
\end{enumerate}
\end{corollary}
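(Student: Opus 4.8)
The plan is to deduce Corollary~\ref{c: breakdown} from Corollary~\ref{c: general delta} by taking $\kappa$ to be the zero form, so that $\Delta\cong GL_n(q)$ and $\Gamma\cong\Gamma L_n(q)$. In that setting $\Omega=\overline{S}$ is $PSL_n(q)$ (up to the small rank situe), and since $p\geq 7$ we are essentially always outside the small rank cases for the zero form; so I would first dispense with the hypothesis of Corollary~\ref{c: general delta} by noting that for $\kappa=0$ the triple $(\kappa,n,q)$ lies in the small rank situe only for $n=1$, which is trivial. Applying Corollary~\ref{c: general delta} directly then yields three possibilities: $H$ contains $\Omega$ (giving case~(1)), $H\leq M_1Z$ with $M_1\in\curly(X)$ (to be refined into cases~(2) and~(3)), or $H_1\leq H\leq H_1Z$ with $H_1$ almost quasi-simple and absolutely irreducible (giving case~(4)).

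The substance of the argument is thus the refinement of the second possibility, where $M_1\in\curly_i(X)$ for some $i\in\{1,\dots,8\}$. First I would observe that for $i=1,\dots,5$ and $i=7$, the member $M_1$ is one of the geometric subgroups that exists already for the zero form, so these fold directly into case~(2). The two families that need separate treatment are $\curly_6$ and $\curly_8$. For $\curly_8$, the subgroup $M_1=\Delta_\mu$ is the stabilizer of a \emph{non-degenerate} form $\mu$ on $V$; here $H\leq X_\mu=\Delta_\mu$, and $(\mu,n,q)$ is either outside or inside the small rank situe. If it is inside, this is precisely case~(5); if it is outside, I would point out that $H$ now sits inside the isometry-type group $X_\mu$ for a non-degenerate form, so one may recursively apply Corollary~\ref{c: general delta} to $X_\mu$ in place of $X$ — and the upshot relevant to the statement is that the only new phenomenon is a $\curly_6$-type subgroup attached to the form $\xi=\mu$, which is exactly case~(3).

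For $\curly_6$, the subgroup $M_1=N_\Gamma(R)\cap X$ normalizes a symplectic-type $r$-group $R$; for the zero form this is already a member of $\curly_6(X)$ and hence belongs to case~(2). The point of case~(3) is to capture $\curly_6$-subgroups that arise not from the zero form on $V$ but from a non-degenerate form $\xi$ — these appear when the recursion through $\curly_8$ lands in a classical group $X_\xi$ and produces its own normalizer-of-symplectic-type-group subgroup. So the logical flow I would record is: apply Corollary~\ref{c: general delta}; classify the resulting $\curly_i$ member; sort $i\in\{1,\dots,5,7\}$ into case~(2); send small-rank $\curly_8$ instances into case~(5); and route the remaining $\curly_8$ and the induced $\curly_6$-subgroups into cases~(3) and~(2) respectively, noting that $\curly_6(X)$ for the zero form lands in~(2) while $\curly_6(X_\xi)$ for a non-trivial form $\xi$ is case~(3).

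I expect the main obstacle to be bookkeeping rather than any deep difficulty: one must be careful that the recursive descent into $X_\xi$ (for a non-degenerate $\xi$ coming from $\curly_8$) terminates correctly and that the only genuinely new subgroup type it contributes, beyond what is already listed, is the $\curly_6$-normalizer recorded in case~(3) and the small-rank exception in case~(5). A subtle point to verify is that replacing $Z$ (the full scalars of $\Gamma$) is compatible across the different ambient groups $X$ and $X_\xi$, so that the containments $H\leq M_1Z$ are stated with respect to the correct scalar subgroup; I would check that the scalars of $X_\xi$ are contained in $Z$ so that no generality is lost when absorbing the recursion into the stated five cases.
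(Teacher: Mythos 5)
Your overall route is the same as the paper's: apply Corollary~\ref{c: general delta} for the zero form, fold $\curly_i(X)$ for $i=1,\dots,7$ into conclusion~(2), and handle $\curly_8$ by descending into $X_\xi$ and applying Corollary~\ref{c: general delta} a second time. But there is a genuine gap in your recursive step. Applied inside $X_\xi$ (outside the small rank situe), Corollary~\ref{c: general delta} returns \emph{three} outcomes, not one: $H$ may contain $\Omega_\xi$; $H$ may lie in $MZ$ for $M\in\curly(X_\xi)$; or $H$ may be, up to scalars, almost quasi-simple with absolutely irreducible socle. Your claim that ``the only new phenomenon is a $\curly_6$-type subgroup attached to $\xi$'' silently discards the first outcome, and your routing does not cover it: a group $H$ with $\Omega_\xi\leq H\leq X_\xi$ --- say $Sp_n(q)\leq H\leq GSp_n(q)$ --- does not contain $\Omega=SL_n(q)$, does not lie in any $M_1Z$ with $M_1\in\curly_i(X)$, and is not in the small rank situe, so as written it satisfies none of conclusions (1), (2), (3), (5). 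The paper closes exactly this hole by observing that $\Omega_\xi$ is quasi-simple and acts absolutely irreducibly on $V$, citing \cite[Proposition 2.10.6]{kl}, so that $H$ falls under conclusion~(4); this must be said explicitly, since conclusion~(4) concerns irreducibility on the original space $V$ and is not part of the statement of Corollary~\ref{c: general delta} for the form $\xi$. You should likewise record that the almost-quasi-simple outcome inside $X_\xi$ again lands in~(4), which here is immediate because absolute irreducibility on $V$ makes no reference to the form.

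Two smaller points. First, folding $M\in\curly_i(X_\xi)$, $i\in\{1,\dots,5,7\}$, into conclusion~(2) is not pure bookkeeping: one needs the observation (which the paper makes parenthetically) that such an $M$ stabilizes a geometric configuration in $V$, whose full stabilizer in $X$ for the zero form is a member of $\curly_i(X)$ containing $M$. Second, the termination of the recursion has a concrete justification that you should state: the only way to re-enter $\curly_8$ from $X_\xi$ is when $\xi$ is alternating and $q$ is even (with $\psi$ a quadratic form), which the paper notes for completeness and which is vacuous under the standing hypothesis $q=p^a$, $p\geq 7$; so the descent stops after at most one further step. Your scalar concern is real but runs in the opposite direction from how you phrased it: what is needed is $Z\leq\Delta_\xi$ (scalars are similarities of every form), giving $H\leq M_1Z\leq X_\xi$, which is precisely the paper's remark that $\Delta_\xi$ contains $Z$.
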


Note that, by $X_\xi$, we mean a group satisfying $\Delta_\xi\leq X_\xi \leq \Gamma_\xi$, for some non-degenerate form $\xi$ acting on $V$. Note too that the requirement that $\kappa$ be the zero form is somewhat superfluous; we retain this condition as it makes the corollary easier to state.

\begin{proof}
The result differs from Corollary \ref{c: general delta} by specifying what happens when $H$ is contained in $M_1Z$ for $M_1\in\curly_8(X)$. So let us examine this case; here $M_1=X_\xi$ for some non-degenerate form $\xi$ acting on $V$; in fact $\Delta_\xi$ contains $Z$ so we conclude that $H\leq X_\xi$. We can now apply Corollary \ref{c: general delta} again. If we lie in the small rank setting for $(\xi, n,q)$ then (5) holds. Assume that we do not lie in the small rank setting. If $H$ contains $\Omega_\xi$ then \cite[Proposition 2.10.6]{kl} implies that (4) holds. If $H$ contains an almost-simple group $H_1$ with $F^*(H_1)=S_1$ absolutely irreducible on $V$ then, again, (4) holds. 

Finally we have the possibility that $H$ lies in $M\in\curly(X_\xi)$. If $M\in\curly_6(X_\xi)$, then (3) holds. If $M\in\curly_i(X_\xi)$, for $i=1,\dots,5, 7$, then the descriptions that we have given above show that $H$ must lie in $\curly_i(X_\kappa)$ and (2) holds. (Observe that, if $M\in\curly_i(X_\xi)$, for some $i=1,\dots, 5,7$, then $M$ stabilizes some geometric configuration in $V$, as well as preserving $\xi$. Consider the group $M_1$, the full stabilizer of this geometric configuration in $X_\kappa$. Then $M_1\in\curly_i(X_\kappa)$ as required.) 

Finally, if $\xi$ is alternating and $q$ is even then it is possible that $M=X_\psi\in\curly_8(X_\xi)$, where $\psi$ is a non-degenerate quadratic form. In this case we repeat the above argument, and the result follows.
\end{proof}

\begin{corollary}\label{c: breakdown2}
 Suppose that $M$ is a maximal subgroup of $\Delta=\GL_n(q)$ lying in one of the families $\curly_i(\Delta)$, $i=1,\dots, 8$. Then $M$ is isomorphic to one of the following groups ($t,m,r$ are all integers; $r$ is prime).
\begin{itemize}
\item[$\curly_1$: ] $[q^{m(m-n)}]. (\GL_m(q)\times \GL_{n-m}(q))$, for $1\leq m<n$.
\item[$\curly_2$: ] $\GL_m(q) \wr S_t$, for $n=mt$ with $t\geq 2$.
\item[$\curly_3$: ] $\GL_m(q^r).r$, for $n=mr$, and the extension is given by a field automorphism.
\item[$\curly_4$: ] $\GL_{n_1}(q)\circ \GL_{n_2}(q)$, for $n=n_1n_2$ with $2\leq n_1 < \sqrt{n}$.
\item[$\curly_5$: ] $\GL_m(q_0)Z$ for $q=q_0^r$ and $Z=Z(\GL_n(q))$.
\item[$\curly_6$: ] The normalizer of an absolutely irreducible symplectic-type $r$-group, for $n=r^m$ with $r\neq p$.
\item[$\curly_7$: ] $\underbrace{(\GL_m(q)\circ \cdots \circ \GL_m(q))}_t.S_t$ for $n=m^t, m\geq 3$.
\item[$\curly_8$: ] $\Delta_\xi$ for $\xi$ a non-degenerate form on the associated vector space $V$.
\end{itemize}
\end{corollary}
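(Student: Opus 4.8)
The plan is to prove the corollary family by family, specializing the general descriptions of the members of $\curly_i(\Delta)$ recorded in the itemized list above to the case where $\kappa$ is the zero form, as stipulated. The key simplification is that when $\kappa=0$ every subspace of $V$ is totally singular and each associated form (the $\kappa_i$ of $\curly_4$ and $\curly_7$, the $\mu$ of $\curly_3$, the $\kappa_0$ of $\curly_5$) is itself zero; consequently every factor $\Delta_{\kappa_i}$ collapses to a full general linear group, while $\Delta=GL_n(q)$ and $\Gamma=\Gamma L_n(q)$. In each family $M=H_\Delta=H_\Gamma\cap\Delta$, so the field-automorphism and semilinear contributions of $\Gamma$ are stripped away, the sole exception being the Galois extension that survives in $\curly_3$. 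The task therefore reduces to reading off the isomorphism type of each $H_\Delta$ and recording the parameter ranges under which it is maximal.

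For $\curly_1$ I would identify $M=N_\Delta(W)$ as the stabilizer of an $m$-dimensional subspace; writing matrices in block form relative to a basis adapted to $W$ exhibits a Levi complement $GL_m(q)\times GL_{n-m}(q)$ acting on the unipotent radical, whose order I compute to be $q^{m(n-m)}$ (the displayed exponent $m(m-n)$ is a sign slip). For $\curly_2$ the stabilizer of $V=V_1\oplus\cdots\oplus V_t$ is visibly the wreath product $GL_m(q)\wr S_t$. For $\curly_3$ I would record that viewing $V$ as an $m$-space over $\Fqr$ yields $GL_m(q^r)$, extended by the order-$r$ cyclic Galois group of $\Fqr/\Fq$ that persists in $\Delta$, giving $GL_m(q^r).r$. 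Cases $\curly_4$ and $\curly_7$ are the tensor-product families: specialization produces the central products $GL_{n_1}(q)\circ GL_{n_2}(q)$ and $(GL_m(q)\circ\cdots\circ GL_m(q)).S_t$ respectively, and here the ranges $2\leq n_1<\sqrt n$ and $m\geq 3$ are precisely the maximality conditions of \cite{kl} that separate the two families and exclude degenerate overlaps. For $\curly_5$, $\curly_6$ and $\curly_8$ the itemized descriptions are already in essentially final form, so I would only confirm the subfield structure (the subfield general linear group extended by the scalars $Z$), the symplectic-type normalizer, and $\Delta_\xi$, citing the relevant propositions of \cite{kl}.

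The main obstacle is not any single computation — each is routine once $\kappa=0$ is imposed — but rather the bookkeeping needed to match each geometric description to a clean abstract isomorphism type while simultaneously pinning down the correct constraints for maximality. The genuinely delicate points are the order of the $\curly_1$ unipotent radical and the boundary separating $\curly_4$ from $\curly_7$ (the case $n_1=n_2=\sqrt n$ is the tensor-square situation belonging to $\curly_7$ with $t=2$, not to $\curly_4$), together with the observation that $\curly_6$ resists any closed-form matrix description and is correctly left as the normalizer of an absolutely irreducible symplectic-type $r$-group. All of the maximality ranges, and the assertion that these eight types exhaust the maximal members of the families, are quoted directly from the tables and propositions of \cite{kl}, so the corollary is ultimately an exercise in specialization and transcription rather than new argument.
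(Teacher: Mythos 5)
Your proposal is correct and takes essentially the same route as the paper, whose proof simply reads the isomorphism types off the descriptions of the families $\curly_i(\Delta)$ given beforehand and defers the details to \cite{kl}; your expansion is just that transcription carried out explicitly. Your observation that the displayed exponent $q^{m(m-n)}$ should be $q^{m(n-m)}$ is a genuine (and correct) catch of a sign slip in the statement, and your handling of the $\curly_4$/$\curly_7$ boundary at $n_1=n_2=\sqrt{n}$ matches the conventions of \cite{kl}.
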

\begin{proof}
This is immediate from the descriptions of $\curly_i(\Delta)$ given above. (For the most part descriptions are given in \cite{kl}.)
\end{proof}

\subsection{Preliminaries for a proof of Lemma A}

We now have enough information about subgroups of $\GL_n(q)$ to begin a proof of Lemma A. Let us remind ourselves of the statement:

\begin{lemmaa}
Suppose that $n$ is a positive integer and that $q=p^a$ for some integer $a$ and prime $p\geq 7$. Let $H$ be an even order subgroup of $\GL_n(q)$. Then there exists an involution $g\in H$ such that
$$|H:C_H(g)|_{p'}\leq q^{n-1}+\dots+q+1.$$
\end{lemmaa}

Let us begin with a proof of Lemma A for $n=2$ (when $n=1$, the statement is a trivial consequence of the fact that $\GL_1(q)$ is abelian).

\begin{lemma}\label{l: psl2}
A maximal subgroup of $\PSL_2(q)$, for $q$ odd, is isomorphic to one of the following groups:
\begin{enumerate}
 \item $D_{q\pm 1}$ (normalizers of tori);
\item $[q]:(\frac{q-1}2)$ (a Borel); 
\item $S_4, A_4$, or $A_5$;
\item $\PSL_2(q_0)$, for $q=q_0^a$ where $a$ is an odd prime;
\item $\PGL_2(q_0)$, for $q=q_0^2$.
\end{enumerate}
\end{lemma}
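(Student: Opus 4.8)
The plan is to obtain this as the special case $n=2$, $\kappa$ the zero form, of the Aschbacher-type machinery assembled in the previous subsection, rather than reproving Dickson's classical determination of the subgroups of $PSL_2(q)$ from scratch; one could of course simply cite that classical result, but since Theorem \ref{t: pglmax} and Corollary \ref{c: breakdown2} are already in hand it is cleaner to read the answer off from them. I would set $X=SL_2(q)$, so that $S=\Omega=SL_2(q)$ and $\overline{X}=PSL_2(q)$. Since $q$ is odd with $q>3$ (in our application $p\geq 7$, so $q\geq 7$), the group $PSL_2(q)$ is simple; hence by Lemma \ref{l: when omega is simple} we are not in the small rank situe, and Lemma \ref{l: easy} applies. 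Thus every maximal subgroup of $PSL_2(q)$ has the form $\overline{M}$ for a maximal subgroup $M$ of $SL_2(q)$, and by Theorem \ref{t: pglmax} each such $M$ either lies in a geometric class $\curly_i(SL_2(q))$ or is the preimage of an almost simple group acting absolutely irreducibly.

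First I would traverse the geometric classes $\curly_1,\dots,\curly_8$ via the explicit descriptions of Corollary \ref{c: breakdown2}, specialised to $n=2$. Classes $\curly_4$ and $\curly_7$ require $n$ to factor with all factors at least $2$ (respectively at least $3$), and $\curly_8$ is vacuous because $Sp_2=SL_2$ admits no proper form subgroup; so these contribute nothing. The surviving classes yield the familiar subgroups: $\curly_1$ gives the Borel $[q]:(\frac{q-1}{2})$; the imprimitive class $\curly_2$, namely $GL_1(q)\wr S_2$, reduces modulo scalars to $D_{q-1}$; the field-extension class $\curly_3$, namely $GL_1(q^2).2$, reduces to $D_{q+1}$; and the subfield class $\curly_5$ (with $q=q_0^r$) gives $PSL_2(q_0)$, with the variant $PGL_2(q_0)$ appearing when $q=q_0^2$. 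The remaining class $\curly_6$, with $n=2=2^1$ and $r=2$, is the normalizer of a symplectic-type $2$-group (extraspecial of order $8$); I would identify its image in $PSL_2(q)$ as $A_4$ or $S_4$ by a direct computation inside $SL_2(q)$.

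Next I would treat the almost simple (class $\mathcal{S}$) possibility: a simple group admitting a faithful, absolutely irreducible projective representation of degree $2$. The key observation is that the full covering group must then carry a genuine $2$-dimensional absolutely irreducible representation, and the only quasi-simple group (other than the various $SL_2$'s themselves) with such a representation is $2.A_5\cong SL_2(5)$; this contributes the subgroup $A_5$. Assembling the outputs of the geometric classes together with class $\mathcal{S}$ then gives precisely the five families in the statement.

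I expect the main obstacle to be the low-dimensional exceptional cases rather than the generic geometric classes. Concretely, pinning down the $\curly_6$ normalizer as $A_4$ versus $S_4$ requires an explicit analysis of the normalizer of $Q_8$ inside $SL_2(q)$ and its reduction modulo $\{\pm I\}$, and the class $\mathcal{S}$ step relies on knowing the complete (and short) list of quasi-simple groups possessing a $2$-dimensional absolutely irreducible representation. A secondary subtlety is that several of these subgroups are contained in one another ($A_4<S_4$, $A_4<A_5$, and subfield chains), so that genuine maximality is governed by congruence conditions on $q$; since the lemma only asserts that each maximal subgroup is \emph{isomorphic to} one of the listed groups, I would not need to determine those congruence conditions, which considerably lightens the bookkeeping.
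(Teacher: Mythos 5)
Your proposal is correct, but it takes a genuinely different route from the paper's: the paper disposes of this lemma in a single line, citing Dickson's classical determination of the subgroups of $PSL_2(q)$ (\cite{dickson}), whereas you re-derive the classification from the Aschbacher--Kleidman--Liebeck machinery assembled in the preceding subsection. Your specialisation to $n=2$ is sound: for odd $q\geq 5$ the triple (zero form, $2$, $q$) avoids the small rank situe of Lemma \ref{l: when omega is simple}, so Lemma \ref{l: easy} and Theorem \ref{t: pglmax} apply, and there is no circularity since Theorem \ref{t: pglmax} is imported from \cite{kl} rather than proved in the paper; your readings of $\curly_1, \curly_2, \curly_3, \curly_5, \curly_6$ and of class $\mathcal{S}$ (only $2.A_5\cong SL_2(5)$ carries a faithful absolutely irreducible $2$-dimensional representation besides the $SL_2$'s themselves) are all accurate. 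Two small imprecisions, neither fatal because the lemma asserts only isomorphism types: first, $\curly_8$ is not literally vacuous for $n=2$ --- when $q$ is odd the orthogonal groups of type $\pm$ exist, and a unitary case arises when $q$ is a square --- but their images in $PSL_2(q)$ land inside the dihedral torus normalizers and inside $PGL_2(q_0)$ respectively, so no new isomorphism type appears; second, your derivation requires $q>3$, while the lemma is stated for all odd $q$ (harmless, since the paper only invokes it with $q=p^a$, $p\geq 7$, and in any case the maximal subgroups $V_4$ and $C_3$ of $PSL_2(3)\cong A_4$ still match items 1 and 2 of the list). The trade-off is clear: the paper's citation is shorter and uniform in $q$, while your derivation costs the extra verifications you correctly flag (identifying the normalizer of $Q_8$ in $SL_2(q)$ modulo $\{\pm I\}$ as $S_4$ or $A_4$, and knowing the short list of quasi-simple groups with $2$-dimensional absolutely irreducible representations) but makes the lemma self-contained relative to \cite{kl} and exercises the $\curly_i$ taxonomy in its smallest nontrivial instance; your observation that only isomorphism types, not maximality congruences on $q$, are needed is exactly the right economy.
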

\begin{proof}
This is well-known; see, for example, \cite{dickson}.
\end{proof}

\begin{lemma}\label{l: gl2}
Let $H<\GL_2(q)$ and suppose that $H$ has even order. Suppose that $q=p^a$ with $p\geq 7$. Then there exists an involution $g\in H$ such that
$$|H:C_H(g)|_{p'}\leq q+1.$$
\end{lemma}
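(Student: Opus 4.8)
The plan is to run through the maximal subgroups of $GL_2(q)$ containing $H$, using the classification machinery of the previous subsection, and in each case exhibit an involution $g \in H$ with $|H:C_H(g)|_{p',\heartsuit} \leq q+1$. Since $GL_2(q) = Z \cdot SL_2(q)$ and the behaviour of involutions is controlled modulo scalars, I would first reduce to analysing $\overline{H} = H/(H\cap Z)$ inside $PSL_2(q)$ (recalling $\overline{\Omega} = PSL_2(q)$ is simple here, as $p\geq 7$ puts us outside the small rank situe). By Corollary \ref{c: general delta}, either $H$ contains $\Omega = SL_2(q)$, or $\overline{H}$ lies in a maximal subgroup of $PSL_2(q)$, which by Lemma \ref{l: psl2} is one of: a dihedral torus normalizer $D_{q\pm1}$, the Borel $[q]{:}(\frac{q-1}{2})$, one of $A_4,S_4,A_5$, a subfield subgroup $PSL_2(q_0)$, or $PGL_2(q_0)$ with $q=q_0^2$.

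The key observation driving every case is that the subscript $\heartsuit$ only retains the contributions of $3$ (with multiplicity one) and of primes $\equiv 1 \pmod 3$ that are $\geq 7$; and the further subscript $p'$ strips out the characteristic $p$. I would pick $g$ to be a well-chosen involution — typically a reflection-type involution whose centralizer in $H$ is as large as possible — and then bound $|H:C_H(g)|$ directly. In the generic case where $H \supseteq SL_2(q)$, the involution class in $PSL_2(q)$ has size roughly $\frac{q(q\pm1)}{2}$, and I expect the $p'$ and $\heartsuit$ reductions to knock this down below $q+1$: the factor $q$ is a $p$-power and vanishes under the $p'$ subscript, while among the two factors of $\frac{(q-1)(q+1)}{2}$ the relevant one survives only partially, leaving a divisor of $q+1$. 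For the torus-normalizer and Borel cases the index $|H:C_H(g)|$ is already at most a divisor of $q\pm 1$ or a $p$-power times a small factor, so the bound is immediate after applying the subscripts. For the small subgroups $A_4,S_4,A_5$ the index is a divisor of a fixed small number ($3,3,15$), and one checks directly that $(\cdot)_\heartsuit$ is at most $q+1$ once $q \geq 7$.

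The subfield and field-automorphism cases ($PSL_2(q_0)$, $PGL_2(q_0)$) I would handle by induction on the field size or by a direct estimate: here $C_H(g)$ contains the centralizer computed in the smaller group $GL_2(q_0)$, and since $q_0 \leq \sqrt{q}$ the index is comfortably below $q+1$. The main obstacle I anticipate is the generic case $H \supseteq SL_2(q)$, specifically the careful arithmetic of the $p'$ and $\heartsuit$ reductions on $\frac{q(q\pm1)}{2}$: one must verify that after removing the $p$-part and keeping only the $\{3\}\cup\{\ell \equiv 1 \,(3)\}$ contributions, what remains genuinely divides something no larger than $q+1$. This hinges on the fact (from Lemma \ref{L:firstly} and its surrounding discussion) that the primes relevant to $\heartsuit$ divide $q^2 \pm q + 1$-type quantities rather than $q$ itself, so the factor $q$ in the class size is always discarded; getting the $\pm$ bookkeeping right between $q-1$ and $q+1$ is the fiddly part, but it is a finite check once the involution $g$ is fixed.
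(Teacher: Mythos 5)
Your proposal is correct in outline and follows the same route as the paper --- reduce modulo scalars to a subgroup of $PSL_2(q)$ and invoke the Dickson list of Lemma \ref{l: psl2} --- but the paper's actual proof is much shorter because it never selects an involution at all. Since scalars are central, $|H:C_H(g)|$ divides the order of the image of $H$ modulo scalars, and the index of $PSL_2(q)$ in $PGL_2(q)$ is $2$, which is invisible to the subscript $\heartsuit$; so it suffices to check that \emph{every} maximal subgroup $H_1$ of $PSL_2(q)$ satisfies $|H_1|_{p',\heartsuit}\leq q+1$, which is immediate from Lemma \ref{l: psl2} (for instance $|A_5|_\heartsuit=3$, the Borel has $p'$-part $\frac{q-1}{2}$, the torus normalizers have order $q\pm1$, and the subfield subgroups have $p'$-order less than $q_0^2\leq q$). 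Your case-by-case computation of centralizer indices, including the proposed induction on the field size for the subfield cases, therefore does more work than is needed, though it arrives at the same bounds.

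Two specific points in your sketch deserve correction. First, in the generic case $H\supseteq SL_2(q)$ you appeal to the involution class in $PSL_2(q)$ of size roughly $\frac{q(q\pm1)}{2}$; but for $q$ odd the unique involution of $SL_2(q)$ is $-I$, and involutions of $PSL_2(q)$ lift to elements of order $4$ in $SL_2(q)$, so the elements of that class need not arise from involutions of $H$, and the $g$ you describe may fail to satisfy $g^2=1$ in $H$. The correct and much easier observation --- the one the paper makes when it treats Case 1 of Lemma \ref{l: relevant groups} --- is that $n=2$ is even, so $H\supseteq SL_2(q)$ contains the central involution $-I$ and the index is $1$ outright. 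Second, your closing appeal to Lemma \ref{L:firstly} is misplaced: that lemma belongs to the projective-plane setting, whereas Lemma \ref{l: gl2} is purely group-theoretic, and the only input concerning $\heartsuit$ is Definition \ref{D: heart} itself (only the prime $3$, to the first power, and primes $\equiv 1 \pmod 3$ survive the reduction). Neither slip is fatal, since your class-size bounds can be rerouted through the order bound above, but as written the generic case would not compile into a proof.
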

\begin{proof}
Observe that $|H:C_H(g)|_{p'}$ divides $|H/(H\cap Z(\GL_2(q))\cap \PSL_2(q)|_{p'}$ and that $H/(H\cap Z(\GL_2(q))\cap \PSL_2(q)$ is a subgroup of $\PSL_2(q)$. If $H/(H\cap Z(\GL_2(q))\cap \PSL_2(q)=\PSL_2(q)$, then $H\geq \SL_2(q)$, $H$ contains a central involution, and the result follows. If $H/(H\cap Z(\GL_2(q))\cap \PSL_2(q)< \PSL_2(q)$ it is sufficient to prove that $|H_1|_{p'}\leq q+1$ for all maximal subgroups $H_1$ of $\PSL_2(q)$. This follows from Lemma \ref{l: psl2}.
\end{proof}

The following result will be useful when we come to consider subgroups of $\GL_n(q)$ with a large normal odd-order subgroup.

\begin{lemma}\label{l: relevant groups}
Suppose that $H$ is a subgroup of $\GL_n(q)$ where $n>2$, $q=p^a$ and $p\geq 7$ is prime. If 
$$|H|_{(2p)'}\geq q^{n-1}+\dots+q+1,$$
then one of the following holds:
\begin{enumerate}
\item $H$ contains $\SL_n(q)$.
\item $H$ is contained in $M_1Z$ where $M_1\in\curly_i(\GL_n(q))$, for $i=1,\dots,5$.
\item $H_1\leq H\leq  H_1Z$ where $H_1$ is almost quasisimple and $F^*(H_1)$ is absolutely irreducible on $V$.
\item $n=4$ and $H\leq \GO_4^+(q)$.
\item $H\leq (\GL_m(q)\circ \GL_m(q)).2$ with $n=m^2$ for $m\geq 3$.
\item $(n,q)=(3,7)$ and $|H/(H\cap Z)|_{(2p)'}$ divides $27$.
\end{enumerate}
\end{lemma}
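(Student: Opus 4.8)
The plan is to start from Corollary \ref{c: breakdown}, which already partitions the possible subgroups $H$ of $GL_n(q)$ into five cases, and to show that under the size hypothesis $|H|_{p',\heartsuit}\geq q^{n-1}+\dots+q+1$ each of those cases either coincides with one of the conclusions listed here or cannot occur. Three correspondences are immediate. Since $\kappa$ is the zero form, $n>2$ and $p\geq 7$ (so we are not in the small rank situe for $\kappa$), the group $\Omega$ of Corollary \ref{c: breakdown} is $SL_n(q)$, so case (1) there becomes conclusion (1) here; the almost-quasi-simple case (4) there is exactly conclusion (3) here; and for the small rank case (5) I would invoke Lemma \ref{l: when omega is simple}, observing that for $p\geq 7$ and $n>2$ the only entry of the small rank list that survives is $SO_4^+(q)$. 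Since $H\leq GL_n(q)=\Delta$, intersecting the relevant $X_\xi$ with $\Delta$ forces $H\leq GO_4^+(q)$, which is conclusion (4).

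It then remains to treat the geometric families. In case (2) of Corollary \ref{c: breakdown} the subfamilies $\curly_i$ with $i=1,\dots,5$ give conclusion (2) directly, so the work concentrates on $\curly_6(GL_n(q))$, on $\curly_7(GL_n(q))$, and on the family $\curly_6(X_\xi)$ appearing in case (3). For $\curly_7$ I would use the description $(GL_m(q)\circ\cdots\circ GL_m(q)).S_t$ of Corollary \ref{c: breakdown2}, with $n=m^t$ and $m\geq 3$: when $t=2$ this is precisely conclusion (5), and when $t\geq 3$ I expect to rule it out on grounds of order. The two $\curly_6$ families I would likewise eliminate by an order estimate, the point in each instance being that they fail the size hypothesis.

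The heart of the argument, and the step I expect to be the main obstacle, is thus the collection of order estimates establishing $|H|_{p',\heartsuit}<q^{n-1}+\dots+q+1$ for $H$ in a $\curly_6$ family or in $\curly_7$ with $t\geq 3$. The crucial observation for $\curly_6$ is that the normalizer of a symplectic-type $r$-group has order $(q-1)\cdot c(r,m)$, where $c(r,m)=|R|\cdot|N/RZ|$ is bounded by $r^{1+2m}|Sp_{2m}(r)|$ (up to the usual modification when $r=2$) and is independent of $q$, the only $q$-dependent factor being the scalar subgroup. A crude bound on $|H|$ alone does not suffice here: for instance when $(n,r,m)=(3,3,1)$ the full order already exceeds $q^{n-1}+\dots+q+1$ at $q=7$. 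One therefore genuinely has to exploit the $_\heartsuit$ and $_{p'}$ truncations, since $r\neq p$ and the surviving prime divisors must be congruent to $1$ modulo $3$, which makes both $(q-1)_\heartsuit$ and $c(r,m)_\heartsuit$ small; comparing with $q^{r^m-1}$, which grows exponentially in $m$ while $c(r,m)$ grows only polynomially, then closes the estimate after a finite check of the smallest pairs $(r,m)$. For $\curly_7$ with $t\geq 3$ the analogous computation is cleaner: since $|GL_m(q)|_{p'}\approx q^{m(m+1)/2}$, one has $|H|_{p',\heartsuit}\leq q^{tm(m+1)/2}\,|S_t|_\heartsuit$, which is dominated by $q^{m^t-1}$ as soon as $tm(m+1)/2<m^t-1$, an inequality holding for all $m\geq 3$, $t\geq 3$ and verifiable directly in the boundary case $(m,t)=(3,3)$. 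Assembling these estimates disposes of the three outstanding families and completes the case analysis.
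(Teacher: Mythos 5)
Your proposal is correct and follows essentially the same route as the paper: the paper's proof also starts from Corollary \ref{c: breakdown}, identifies its cases 1, 4 and 5 with conclusions 1, 3 and 4 and its $\curly_1$--$\curly_5$ families with conclusion 2, keeps $\curly_7$ with $t=2$ as conclusion 5 while killing $t\geq 3$ by an order count (with $(m,t)=(3,3)$ as the boundary case, which your sharper $p'$-bound $tm(m+1)/2<m^t-1$ in fact covers), and eliminates the $\curly_6$ families over both the zero and non-degenerate forms by bounding the $p'$-$\heartsuit$ truncation of the image modulo scalars inside $PSL_n(q)$, with a direct verification at the smallest case $q=n=7$ exactly matching the finite check of small $(r,m)$ you anticipate. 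One small caveat: your remark that $(q-1)_\heartsuit$ is small is false in general (e.g.\ $q-1$ may be divisible by many primes $\equiv 1\pmod 3$), but this is harmless and your estimate still closes, since the target $q^{n-1}+\dots+q+1$ absorbs a full scalar factor of $q-1$; the paper handles this precisely by proving $|M_1|_{p',\heartsuit}|(n,q-1)|_{p',\heartsuit}<q^{n-2}$ for the reduction $M_1$ modulo scalars, rather than by any smallness of $(q-1)_\heartsuit$.
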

\begin{proof}
Since $H$ is a subgroup of $\GL_n(q)$ we apply Corollary \ref{c: breakdown}. If conclusion 5 of that corollary applies, then, since $p\geq 7$ and $n>2$, we must have conclusion 4 here.

Thus we are left with the question of what happens when $H\leq M\in \curly_6(\Delta_\zeta)$ for some form $\zeta$ that is non-degenerate or zero, or when $H\leq M\in \curly_7(\Delta_\kappa)$ when $\kappa$ is the zero form.

Consider the latter situation first. Then $M$ is equal to 
$$\underbrace{(\GL_m(q)\circ \cdots \circ \GL_m(q))}_t.S_t$$
for $n=m^t, m\geq 3$. Observe that $t!<7^{t^2}$ and so $|M|_{p'}< q^{m^2t} t!\leq q^{m^2t+t^2}<q^{m^t-1}$ unless $t=2$ or $(m,t)=(3,3)$. When $(m,t)=(3,3)$, observe that $|M|_{(2p)'}<3q^{18}$; when $t=2$ we obtain conclusion 5.

We are left with the problem of showing that if $M\in \curly_6(\Delta_\zeta)$ for some form $\zeta$ that is non-degenerate or zero, then either conclusion 6 holds or
$$|M|_{(2p)'}\leq q^{n-1}+\dots+q+1;$$
in fact we prefer to show that either conclusion 6 holds or $M_1 = (M/(M\cap Z))\cap \PSL_n(q)$ satisfies the inequality 
\[
|M_1|_{(2p)'}|(n,q-1)|_{(2p)'}<q^{n-2}.
\]
Observe that $M_1$ is a subgroup of $\PSL_n(q)$ and so its structure is given in \cite[\S 4.6]{kl}.

Suppose first that $n=2^m$. Then
\begin{equation*}
\begin{aligned}
|M_1|_{(2p)'}|(n,q-1)|_{(2p)'} &\leq \max\{45, |Sp_{2m}(2)|_{(2p)'}, |\Omega^+_{2m}(2)|_{(2p)'}, |\Omega^-_{2m}(2)|_{(2p)'}\} \\
&\leq  \max\{45, \prod_{i=1}^{m}(2^{2i-1} - 1)\} \\
&\leq \max\{45, 2^{m^2+m}\}.
\end{aligned}
\end{equation*}
Now $\max\{45, 2^{m^2+m}\}<q^{2^m-2}$ for $m\geq 3$. When $m=2$, we find that $|M_1|_{(2p)'} \leq 45$ and we are done.

Suppose that $n=r^m$ for $r$ odd. Then $|M_1|_{(2p)'}\leq \max\{9, |Sp_{2m}(r)|_{(2p)'}\}$ and 
\[|M_1|_{(2p)'}|(n,q-1)|_{(2p)'}<r^{2m^2+4m}.\]
This yields the required bound unless
\[
 (m,r) \in\{(2,3), (1,3), (1,5), (1,7).
\]
All of these can be ruled out individually, except $(m,r)=(1,3)$. In this case we obtain conclusion 6.
\end{proof}

In what follows we will prove Lemma A by referring to the five types of subgroup given in Corollary~\ref{c: breakdown}.

\subsection{Easy cases}

We begin by proving Lemma A for some easy situations; recall that $p\geq 7$. Apply Corollary~\ref{c: breakdown} with $X=\GL_n(q)$. Set $Z:=Z(\GL_n(q))$.

{\bf Case 1}. Here $H\geq \Omega = SL_n(q)$. If $n$ is even then $H$ contains a central involution and we are done. If $n$ is odd then $H$ contains an involution $g$ such that $|H:C_H(g)|_{p'} = q^{n-1}+\dots+q+1$.

{\bf Case 5}. In this case  $n=4$ and $H\leq \GO_4^+(q)$; now $\GO_4^+(q)$ has a normal subgroup $N$ of index $2$ isomorphic to 
\[\GL_2(q)\circ \GL_2(q)=\GL_2(q)\times \GL_2(q)/\{(g,g^{-1}) \mid g\in Z(\GL_2(q))\}. 
\]
Now we apply Lemma~\ref{l: gl2} to the group $N$ and the result follows.

{\bf Case 2 with $M_1\in \curlyc{7}(\GL_n(q))$}. In this case $M$ is equal to 
$$\underbrace{(\GL_m(q)\circ \cdots \circ \GL_m(q))}_t.S_t$$
for $n=m^t, m\geq 3$. Observe that $t!<7^{t^2}$ and so $|M|_{p'}< q^{m^2t} t!\leq q^{m^2t+t^2}<q^{m^t-1}$ unless $t=2$ or $(m,t)=(3,3)$. When $(m,t)=(3,3)$, observe that $|M|_{(p)'}<6q^{18}$ and we are done.

Finally suppose that $t=2$ and so $H\leq M=(\GL_m(q)\circ \GL_m(q)):2$ with $m\geq 3$. Define $N=\GL_m(q)\circ \GL_m(q)$, a normal subgroup in $M$ of index $2$. More precisely $N$ is equal to
\[
 \GL_m(q)\times \GL_m(q)/ \{(g,h) \in Z(\GL_m(q))\times Z(\GL_m(q)) \mid h=g^{-1}\}.
\]
Let $g$ be an involution in $M\backslash N$; then one can check that $g$ is the projective image of $((a_1, \pm a_1^{-1}), b)\in (\GL_m(q)\times \GL_m(q))\rtimes C_2$, with $b$ non-trivial. Easy matrix calculations confirm that
$|\GL_m(q)|$ divides $|M:C_M(g)|$, hence if $H$ contains an involution $g$ outside $N\cap H$ then $|H:C_H(g)|_{p'}<q^{\frac{m(m+1)}2}$ which satisfies our bound since $m\geq 3$.

If, on the other hand, $H$ contains no such involution then we take $g\in N$. The largest conjugacy class $C$ of involutions in $N$ has $|C|_{p'}$ divisible by
$$\left(\frac{\prod_{i=1}^m(q^i-1)}{\prod_{i=1}^{\lfloor\frac{m}2\rfloor}(q^i-1)\prod_{i=1}^{\lceil\frac{m}2\rceil}(q^i-1)}\right)^2.$$
Then $|H:C_H(g)|_{p'}\leq 2|N:C_N(g)|_{p'}<2q^{\frac12m^2}$ which satisfies our bound since $m\geq 3$.

{\bf Case 3 and Case 2 with $M_1\in \curlyc{6}(\GL_n(q))$.} In these cases $H\leq M\in\curly_6(\Delta_\eta)$ for $\eta$ non-degenerate or zero.

Suppose first that $n=r^m$ where $r$ is an odd prime. Then 
\[
\begin{aligned}
 |M_1/(M_1\cap Z) &\leq (n,q-1)\cdot \max\{72, r^{2m}|\Sp_{2m}(r)|\} \\ 
 &\leq (n,q-1)\cdot \max\{72, r^{3m+2m^2}\}.
\end{aligned}
 \]
One can check that $|M_1/(M_1\cap Z)|<q^{n-1}$ unless $n=3,5$ or $9$. If $n=5$ or $9$, then one must have $q=7$ and the result can be confirmed by inspection. If $n=3$, then $|M_1/(M_1\cap Z)|_{2'}$ divides $81$ and so any involution $g$ in the center of a Sylow $2$-subgroup of $H\leq M_1$ will satisfy the required bound unless $q=7$. If $n=3$ and $q=7$, then the result can be confirmed by inspection.

Suppose next that $n=2^m$ for some $m\geq 2$. Then one can check that $|M_1/(M_1\cap Z)|_{2'}<7^{n-2}$ and so any involution $g$ in the center of a Sylow $2$-subgroup of $H\leq M_1$ will satisfy the required bound.


\subsection{Case 4 (almost quasisimple)}

We need to prove Lemma A for $H$ where $H_1\leq H\leq  H_1Z$ and $H_1$ is almost quasisimple such that $F^*(H_1)$ is absolutely irreducible. Clearly we can assume that $H=H_1$.

In what follows $g\in H$ is an involution.
Suppose first that $Z=Z(F^*(H_1))$ has even order. Checking Schur multipliers \cite[Theorem 5.1.4]{kl}, we see that $Z$ contains at most $3$ involutions. Taking $g$ to be one of these we obtain that $|H: C_{H}(g)|$ divides $3$ and the result follows. Suppose from here on that $|Z|$ is odd.

Now Lemma~\ref{L: oddnormal} implies that
\[|H:C_H(g)| = |Z:C_Z(g)|\times |H/Z: C_{H/Z}(gZ)|.\]
Clearly if $g\in F^*(H_1)$, then
\[|H:C_H(g)| = |H/Z: C_{H/Z}(gZ)|.\]
Thus to prove Lemma A in this case it is sufficient to show that the simple group $S=F^*(H_1)/Z$ contains an involution $g$ such that 
\begin{equation}\label{ccc}
|\Aut(S): C_{\Aut(S)}(g)| _{p'} \leq q^{n-1}+\dots + q+1.
\end{equation}

Our first step in proving this bound is to establish, given a simple group $S$, what the minimum possible value for $n$ is. We observe first that an embedding of $H$ in $\GL(V)$ induces an imbedding of $H/Z_1$ in $\PGL(V)$ where $Z_1$ is some central subgroup of $F^*(H_1)$. Define
$$R_p(G)=\min\{n \, | \, G\leq \PGL_n(\mathbb{F}) \textrm{ for } \mathbb{F} \textrm{ a field of characteristic } p\}.$$

\begin{lemma}\cite[Corollary 5.3.3]{kl}
If $L$ is quasisimple, then $R_r(L)\geq R_r(L/Z(L))$ for all primes $r$.
\end{lemma}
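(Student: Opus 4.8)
The plan is to extract, from a minimal faithful projective representation of $L$, a faithful projective representation of $\overline{L}:=L/Z(L)$ of no larger degree. Concretely, set $n=R_r(L)$ and fix an embedding $L\hookrightarrow PGL_n(\mathbb{F})$ with $\mathrm{char}\,\mathbb{F}=r$; replacing $\mathbb{F}$ by its algebraic closure I may assume $\mathbb{F}=\overline{\mathbb{F}}$, since this changes neither the characteristic nor the existence of the embedding. Let $\hat L\leq GL_n(\mathbb{F})$ be the full preimage of $L$ under the projection $\pi:GL_n(\mathbb{F})\to PGL_n(\mathbb{F})$, so that $\hat L$ is a central extension of $L$ by the scalars $\hat L\cap Z(GL_n(\mathbb{F}))$. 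The first thing to record is that $\pi$ induces an isomorphism $\hat L/Z(\hat L)\cong\overline{L}$: the composite $\hat L\to L\to\overline{L}$ is surjective, and its kernel is the preimage of $Z(L)$, which one checks equals $Z(\hat L)$ (any preimage of a central element of $L$ commutes with $\hat L$ modulo scalars, hence commutes with $\hat L$; conversely $Z(\hat L)$ maps into $Z(L)$).

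Next I would manufacture projective representations of $\overline{L}$ from the constituents of the natural module. Let $V=\mathbb{F}^n$ be the $\hat L$-module afforded by the inclusion $\hat L\leq GL_n(\mathbb{F})$, and take any composition factor $W$ of $V$. Since $\mathbb{F}$ is algebraically closed, $W$ is absolutely irreducible, so by Schur's lemma every element of the central subgroup $Z(\hat L)$ acts on $W$ as a scalar. Consequently the action of $\hat L$ on $W$, read projectively, factors through $\hat L/Z(\hat L)\cong\overline{L}$, giving a homomorphism $\theta_W:\overline{L}\to PGL(W)$ of degree $\dim W\leq n$.

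It remains to find a single constituent $W$ for which $\theta_W$ is faithful, and here the simplicity of $\overline{L}$ does the work: any nontrivial homomorphism out of the nonabelian simple group $\overline{L}$ is injective, so it suffices to show that $\hat L$ does not act by scalars on every composition factor. Suppose it did. Then the resulting map from $\hat L$ into the abelian group of tuples of scalars has kernel $U$ consisting of the elements acting trivially on every factor; such elements are unipotent, and $U$ is finite (it meets the scalars trivially, hence injects into $L$), so $U$ is an $r$-group while $\hat L/U$ is abelian. Then $\hat L'\leq U$ would be an $r$-group surjecting onto the perfect group $\overline{L}$, forcing $\overline{L}$ to be an $r$-group, which is impossible for a nonabelian simple group. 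Hence some constituent $W$ yields a faithful $\theta_W$, whence $\overline{L}\hookrightarrow PGL(W)$ over a field of characteristic $r$ and $R_r(\overline{L})\leq\dim W\leq n=R_r(L)$, as required.

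The delicate point, and the reason one cannot simply pass to an ambient quotient of $PGL_n$, is that $Z(L)$ sits inside $PGL_n(\mathbb{F})$ as a \emph{noncentral} subgroup, so no quotient of the ambient group kills it; the argument must instead descend through the linear lift $\hat L$ and its centre. The only genuine obstacle is the modular case $r\mid|L|$, where $V$ need not be semisimple: one cannot pick out a faithful irreducible summand, and must argue with composition factors instead, deriving the final contradiction from perfectness of $\overline{L}$ rather than from any splitting of $V$.
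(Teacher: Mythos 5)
Your proof is correct. Note first that the paper itself offers no proof of this statement --- it is imported wholesale from Kleidman--Liebeck \cite[Corollary 5.3.3]{kl} --- so your argument is compared against the standard one rather than anything in the text. What you give is essentially the classical lift-and-composition-factor argument that underlies the treatment in \cite{kl}, where the result is organised around linear representations of the full covering group; your version descends through the preimage $\hat L\leq GL_n(\mathbb{F})$ directly, which buys self-containedness at the cost of one step you state but do not justify: the identification of the full preimage of $Z(L)$ with $Z(\hat L)$. That step is true but not formal --- it is exactly where quasisimplicity enters. If $x\in\hat L$ maps to a central element of $L$, then $[x,y]$ is a scalar for every $y\in\hat L$, and since these commutators are central the map $y\mapsto[x,y]$ is a homomorphism from $\hat L$ into the abelian group of scalars; it kills the scalars themselves, hence factors through $\hat L/Z\cong L$, and perfectness of $L$ forces it to be trivial, so $x\in Z(\hat L)$. (Without perfectness the claim fails, e.g.\ for preimages of abelian subgroups of $PGL_2$.) The remaining steps all check out: Schur's lemma applies to each composition factor because $\dim W\leq n<\infty$ over the algebraically closed $\mathbb{F}$, so $Z(\hat L)$ acts by scalars and each $\theta_W$ is defined on $\overline{L}$; in the degenerate case your kernel $U$ is finite (it meets the scalars trivially) and consists of unipotent elements, so every element has $r$-power order and Cauchy's theorem makes $U$ an $r$-group, whence $\hat L'\leq U$ surjecting onto $\overline{L}$ gives the contradiction; and simplicity of $\overline{L}$ converts nontriviality of some $\theta_W$ into faithfulness. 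Your closing observation --- that one must argue with composition factors because $V$ need not be semisimple in the modular case, and that $Z(L)$ cannot be killed by any ambient quotient of $PGL_n$ --- correctly identifies why the naive approach fails and why the descent through $\hat L$ is necessary.
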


The lemma implies that, given a simple group $S$, it is sufficient to prove that $S$ contains an involution $g$ satisfying \eqref{ccc}, where $n=R_p(S)$. The next lemma establishes this fact for a large class of simple groups.

\begin{lemma}
Let $p\geq 7$ be a prime, and let $S$ be a non-abelian simple group. Assume that $S$ is not isomorphic to $A_5$ and that $S$ is not a group of Lie type in characteristic $p$. Then $S$ contains an involution $g$ such that
$$|Aut(S):C_{Aut(S)}(g)|_{p'} \leq p^{n-1}+\dots + p+1,$$
where $n=R_p(S)$.
\end{lemma}
\begin{proof}
The lemma is proved using information in \cite{kl}. In particular we use Propositions 5.3.7 and 5.3.8 as well as Theorem 5.3.9, all of which list values of $R_p(S)$ for different non-abelian simple groups $S$.

Suppose that $S=A_m$ is the alternating group on $m$ letters with $m\geq 8$. Then $Aut(S)\cong S_m$ and $S$ contains an involution $g$ (a double transposition) such that 
$$|S_m: C_{S_m}(g)| = \frac{m(m-1)(m-2)(m-3)}{8}.$$
In particular $|S_m: C_{S_m}(g)|<7^{m-3}$ for $m\geq 8$. Now \cite[Proposition 5.3.7]{kl} implies that $n\geq m-2$ and so $|S_m: C_{S_m}(g)|<7^{m-3}\leq q^{n-1}$ as required. For $m=6,7$ one can check the result directly. 

If $S$ is a sporadic group then one can use \cite[Table 5.1.C and Proposition 5.3.8]{kl} along with \cite{atlas} to confirm the result in each case.
 
Next suppose that $S$ is a group of Lie type of characteristic coprime to $p$. Values for $R_p(S)$ are given in \cite[Theorem 5.3.9]{kl}, and a series of (easy and tedious) calculations confirms the inequality that we require. We present two sample calculations: If $H=E_6(r)$ then $R_p(S)=r^9(r^2-1)$. Now observe that $|S|<r^{78}$; it is then sufficient to observe that $r^{78}<7^{r^9(r^2-1)-1}$. If, on the other hand, $S=\Omega_{2m+1}(r)$ with $r$ odd and $m>3$ then $R_p(S)\geq r^{2(m-1)}-r^{m-1}$. Now $|S|<r^{2m^2+m}$ and it is sufficient to observe that $r^{2m^2+m}<7^{r^{2(m-1)}-r^{m-1}-1}$.\label{rechecked for $L_2(q)$}
\end{proof}

Let us deal with the case $A_5$.

\begin{lemma}
 Suppose that $H_1\leq H \leq H_1Z\leq \GL_n(q)$ where $H_1$ is almost quasisimple and $q=p^a$ for some integer $a$ and prime $p\geq 7$. If $F^*(H_1)/Z(F^*(H_1))\cong A_5$, then $H$ contains an involution $g$ such that 
 \[
  |H:C_H(g)|_{p'}\leq q^{n-1}+\cdots+q+1.
 \]
\end{lemma}
\begin{proof}
Suppose, first, that $F^*(H_1)$ is not simple. Since the Schur multiplier of $A_5$ has order $2$, this implies that $F^*(H_1)$ contains a unique central involution and the result follows. 

Suppose, then, that $F^*(H_1)=A_5$. Since $p\geq 7$ does not divide $|A_5|$, the representation $A_5\to \GL_n(q)$ decomposes into a direct sum of irreducibles. Consulting \cite{atlas} we see that the minimal degree of a non-trivial irreducible is $3$ and so $n\geq 3$. Now the result follows from the fact that $A_5$ contains a unique conjugacy class of involutions, and this class has order $15$.
\end{proof}

The next lemma completes the proof of Lemma A for subgroups from case 4.

\begin{lemma}\label{l: same char}
 Let $p\geq 7, q=p^f$ and $S$ be a quasisimple group of Lie type defined over a field of size $p^e$ lying inside $\GL(n,q)$, such that the associated vector space $V$ is an absolutely irreducible $\Fq S$-module. Then
$$|\Aut(S):C_{\Aut(S)}(g)|_{p'} \leq q^{n-1}+\dots + q+1.$$ 
\end{lemma}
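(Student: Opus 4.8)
The plan is to read the conclusion, in line with the analogous Proposition just proved, as asserting the \emph{existence} of an involution $g\in S$ with the stated property, and to reduce at once to $S$ simple (passing to $S/Z(S)$, under which $\mathrm{Aut}\,S$ is unchanged). The whole argument hinges on the standing hypothesis $p\equiv 1\pmod 3$: this means the $p$-part of any integer survives the $\heartsuit$-operation, so $\heartsuit$ discards only prime divisors $\equiv 2\pmod 3$ together with all but one factor of $3$. Writing $S=S(q_0)$ for a group of Lie type over $\Fiel_{q_0}$ with $q_0=p^e$, the hypothesis that $V$ is an absolutely irreducible $\Fq S$-module realises $S$ inside $GL_n(\Fiel_q)$ over the stated field, which I would use to pin down $q_0\le q$; crucially, since we arrive here in Case~3 (almost quasi-simple, Aschbacher class $\mathcal{S}$, so non-geometric), $V$ is \emph{not} the natural module of a classical group. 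The strategy is then to choose a single involution $g\in S$ with a large subsystem centraliser, bound $|S:C_S(g)|$ and $|\mathrm{Out}\,S|$, and compare the resulting $\heartsuit$-value with $q^{n-1}+\dots+q+1$.

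For the class-size bound I would invoke the standard description of involutions in groups of Lie type in odd characteristic: every involution is semisimple with centraliser a reductive subsystem subgroup, so $|S:C_S(g)|$ has $p$-part $q_0^{\,N-N'}$ (with $N,N'$ the numbers of positive roots of $S$ and of $C_S(g)$) and $p'$-part a product of cyclotomic factors. I would take $g$ to minimise $N-N'$ — for the classical families this is the smallest reflection-type involution, e.g. $\mathrm{diag}(-I_2,I_{m-2})$ for the linear and unitary groups. As $g$ is inner, $|\mathrm{Aut}\,S:C_{\mathrm{Aut}\,S}(g)|\le |\mathrm{Out}\,S|\cdot|g^S|$, and since $\heartsuit$ is submultiplicative this gives
\[
|\mathrm{Aut}\,S:C_{\mathrm{Aut}\,S}(g)|_\heartsuit \;\le\; |\mathrm{Out}\,S|_\heartsuit\cdot q_0^{\,N-N'}\cdot\bigl(|g^S|_{p'}\bigr)_\heartsuit .
\]
Here $|\mathrm{Out}\,S|_\heartsuit$ and the $\heartsuit$-part of the cyclotomic $p'$-factor are small (at worst polynomial in $\log q_0$ and in the rank), while the dominant term is $q_0^{\,N-N'}\le q^{\,N-N'}$.

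Everything then collapses to one numerical inequality per Lie type. Because $(S,V)$ is a class-$\mathcal{S}$ pair, $V$ is not a natural classical module, so $n=\dim V$ is bounded below by the dimension of the smallest \emph{non-natural} absolutely irreducible representation in characteristic $p$; I would take these lower bounds from the defining-characteristic dimension estimates recorded in \cite{kl} (Propositions 5.3.7, 5.3.8, Theorem 5.3.9) and the references therein. For each family one verifies an inequality of the shape $|\mathrm{Out}\,S|_\heartsuit\cdot q_0^{\,N-N'}\cdot(\text{cyclotomic factor})_\heartsuit\le q^{n-1}$, which holds comfortably because $N-N'$ grows only linearly in the rank whereas the non-natural module dimension $n$ grows at least quadratically (and, for spin or exceptional modules, faster still), the small correction factors being absorbed into the difference. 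This yields $|\mathrm{Aut}\,S:C_{\mathrm{Aut}\,S}(g)|_\heartsuit\le q^{n-1}<q^{n-1}+\dots+q+1$.

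The main obstacle is twofold. First, one must be explicit that the clean inequality genuinely \emph{fails} for natural classical modules — for instance the smallest involution class of $SL_m(q)$ already has $\heartsuit$-index exceeding $q^{m-1}$ once $m\ge 4$ — so the proof must lean on the Case~3 hypothesis to exclude these natural actions, which are instead disposed of in Cases~1, 4 and~5. Second, the low-rank and small-$q_0$ groups are the delicate ones: there the dimension lower bounds are weakest while the diagonal and field contributions to $|\mathrm{Out}\,S|_\heartsuit$ (for example the factor $(m,q_0-1)$, which may be divisible by primes $\equiv 1\pmod 3$) are proportionally largest, so the asymptotic comparison does not by itself close the argument. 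I expect these finitely many exceptional pairs $(S,V)$ to need direct verification, in the same spirit as the borderline cases $(q,n)=(7,7)$ and $(m,t)=(3,3)$ already treated by hand earlier in the section.
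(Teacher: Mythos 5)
Your proposal has the right skeleton (pick an explicit involution with large reductive centraliser, bound $|{\rm Out}\,S|$, compare against lower bounds for $n$ from \cite{kl}, and hand-check small cases), but it founders on the structural premise you introduce to make the inequality true: that arrival ``in Case~3'' excludes natural classical modules. In this paper that is false. The proof of Corollary \ref{c: breakdown} deliberately routes any $H$ containing $\Omega_\xi$, for $\xi$ a non-degenerate form --- that is, the symplectic, unitary and orthogonal groups in their \emph{natural} actions --- into its conclusion (4) via \cite[Prop.~2.10.6]{kl}, and that conclusion is exactly Case 3 of Lemma \ref{l: relevant groups}. So Lemma \ref{l: same char} must cover, and the paper's proof does cover, e.g.\ $\Omega_{2m+1}(p^e)$ and $SU_m(p^e)$, $SL_m(p^e)$ with $m$ odd, on modules of dimension \emph{linear} in the rank (the bounds actually invoked are $n\geq em/f$, $n\geq 2me/f$, $n\geq me/f$ from \cite[\S 5.4]{kl}, not quadratic ones). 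Under your reading of the subscript --- bare $\heartsuit$, so that $p\equiv 1\pmod 3$ forces the $p$-part $q_0^{N-N'}$ to survive --- the stated inequality genuinely fails for these pairs: for the natural module of $\Omega_7(q)$, say, every involution class of $S$ has $p$-part at least $q^{2m-1}$ ($m=3$), and the smallest class carries in addition the factor $(q^2+q+1)/3$, all of whose prime divisors are kept by $\heartsuit$, which already exceeds the bound $q^{2m}+\dots+q+1$. This is the same failure you concede for natural $SL_m$; it cannot be legislated away by Aschbacher-class membership, because these representations are not disposed of in Cases 1, 4 and 5 as you assert. The correct resolution is that the subscript in the statement is the $|\cdot|_{p',\heartsuit}$ which the paper's own proof computes in every displayed estimate, and which is all the application requires (Case 3 of the proof of Lemma A only needs a bound on $|H:C_H(g)|_{p',\heartsuit}$): once the $p$-part is discarded, the $p'$-part of a semisimple involution class is a cyclotomic quantity of degree at most $n-1$ in $q$, and the natural modules pass with the linear dimension bounds. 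Your argument, organised around keeping $q_0^{N-N'}$ as the dominant term, proves a different and (in the cases that actually arise) false statement.

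A second, related defect: the paper does use non-naturality, but only for quasi-simple covers of $PSL_m(p^e)$ and $PSU_m(p^e)$ with $m$ \emph{even}, and the mechanism is not class membership but the absence of a central involution in the acting group $S$: if $V$ were natural with $m$ even then $-I\in S$ would be a central involution, the trivial case disposed of at the outset, whence $n\geq \frac12 m(m-1)$ by \cite[Prop.~5.4.11]{kl}. Your opening reduction to $S$ simple destroys precisely this information --- the dimension bounds concern faithful modules of the quasi-simple cover, not of $S/Z(S)$ --- so after that reduction the even-$m$ case cannot be closed. Finally, your citations \cite[5.3.7, 5.3.8, 5.3.9]{kl} are the cross-characteristic bounds used in the \emph{preceding} Proposition; the defining-characteristic facts needed here are \cite[Props.~5.4.6, 5.4.8, 5.4.11--5.4.15 and Cor.~2.10.10]{kl}, supplemented for the exceptional groups by the involution-centraliser data of \cite[Table 4.5.1]{gorenstein} together with their fixed minimal dimensions $7, 26, 27, 56, 248, 24$ --- there too no quadratic-in-rank growth is available to lean on.
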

\begin{proof}
We go through the possible quasisimple groups, as listed in the Classification of Finite Simple Groups. Since $p\geq 7$, $S$ is not isomorphic to ${^2B_2}(q)$, ${^2G_q}(q)$ or ${^2F_4}(q)$. Note that the lemma is trivial if $S$ contains a central involution; hence we assume that this is not the case. 

We assume that the representation $S\to \GL_n(q)$ induced by the given embedding cannot be realised over a proper subfield of $\mathbb{F}_q$, otherwise we redefine $q$ to take a smaller value. Since $p\geq 7$ we observe that the universal quasisimple cover of $S$ is a simply connected group of Lie type, hence we can apply \cite[Proposition 5.4.6]{kl}. 

Suppose $S$ is a quasisimple cover of $\PSL_m(p^e)$. If $m$ is odd, then $S$ contains an involution such that
$$|\Aut(S): C_{\Aut(S)}(g)|_{p'} = ((p^e)^{m-1}+\cdots + (p^e)+1.$$
Then \cite[Propositions 5.4.6 and 5.4.11]{kl} imply that $f|e$ and $n\geq \frac{em}f$, and the result follows.

If $m$ is even, then the absence of a central involution implies that $V$ is not the natural module for $S$. Then \cite[Propositions 5.4.6 and 5.4.11]{kl} imply that $n\geq \frac{e}{2f}m(m-1)$. We know that $S$ contains an involution such that
$$|\Aut(S): C_{\Aut(S)}(g)|_{p'} = ((p^e)^{m-2}+\cdots + (p^e)+1)((p^e)^{m-2}+\cdots + (p^e)^2+1).$$
Since $f|e$, the result follows immediately for $m\geq 4$. When $m=2$, $S=\PSL_2(q)$ and contains an involution $g$ such that
$$|\Aut(S): C_{\Aut(S)}(g)|_{p'} \leq p^e + 1.$$
Since $n\geq 2e/f$ we are done.

Suppose $S$ is a quasisimple cover of $\PSU_m(p^e)$ (so $S$ is defined over $\Fiel_{p^{2e}}$). Assume that $m>2$, since $\PSU_2(p^e)\cong \PSL_2(p^e)$ and this case is done. If $m$ is odd, then $S$ contains an involution such that
$$|\Aut(S): C_{\Aut(S)}(g)|_{p'} = (p^e)^{m-1}-\cdots -(p^e)+1.$$
Then \cite[Propositions 5.4.6 and 5.4.11]{kl} imply that $n\geq \frac{em}f$, and the result follows.

If $m$ is even, then, once again, the absence of a central involution implies that $V$ is not the natural module for $S$. Then \cite[Propositions 5.4.6 and 5.4.11]{kl} imply that $n\geq \frac{e}{2f}m(m-1)$. We know that $S$ contains an involution such that
$$|\Aut(S): C_{\Aut(S)}(g)|_{p'} = ((p^e)^{m-2}-\cdots -(p^e)+1)((p^e)^{m-2}+\cdots + (p^e)^2+1).$$
The result follows immediately for $m\geq 4$.

For the remaining classical groups, since $S$ has no central involution, we have $S$ simple. We take $g$ to be an involution that is central in the Levi subgroup $L$ of a parabolic subgroup of $S$. 

Suppose that $S=\PSp_{2m}(p^e)$ and assume that $m\geq 2$, since $\PSp_2(p^e)\cong \PSL_2(p^e)$. Choose $L$ to be isomorphic to $(q-1).(\PGL_2(q)\times \PSp_{n-2}(q))$ \cite[Proposition 4.1.19]{kl}. Then $L$ contains a central involution $g$ such that
$$|\Aut(S): C_{\Aut(S)}(g)|_{p'} \leq (p^e)^{2m-2}+\cdots +(p^e)^2+1.$$
Since $n\geq\frac{2me}{f}$, we are done.

Suppose that $S=\POmega^\epsilon_m(p^e)$. Assume that $m\geq 7$ as, otherwise, $P\Omega^\epsilon(p^e)$ is either not simple, or else is isomorphic to a simple group that we have already covered. Choose $L$ to be isomorphic to a Levi complement stabilizing a 2-dimensional totally singular subspace of the associated natural vector space, as described in \cite[Proposition 4.1.20]{kl}. If $m$ is odd, then $L$ contains a central involution $g$ such that
$$|\Aut(S): C_{\Aut(S)}(g)|_{p'} \leq (p^e)^{m-1}-1.$$
Since $n\geq\frac{me}{f}$, we are done.
If $m$ is even, then $L$ contains a central involution $g$ such that
$$|\Aut(S): C_{\Aut(S)}(g)|_{p'} \leq ((p^e)^{m/2-1}-1)((p^e)^{(m-2)/{2}}+1).$$
Since $n\geq\frac{me}{f}$, we are done.

Now for the exceptional groups. In what follows we use the name of the exceptional group to refer to the adjoint version (which is simple, since $p\geq 7$). 

Suppose first that $S=G_2(p^e)$; then \cite[Table 4.5.1]{gorenstein} implies that $S$ contains an involution $g$ such that $C_S(g)\geq \SL_2(p^e)\circ \SL_2(p^e)$. Thus
$$|\Aut(S): C_{\Aut(S)}(g)|_{p'} \leq e((p^e)^{4}+ (p^e)^2+1).$$
Now $f|e$ and $n\geq \frac{7e}{f}$ \cite[Propositions 5.4.6 and 5.4.12]{kl} and we are done.

Suppose that $S=F_4(p^e)$; then \cite[Table 4.5.1]{gorenstein} implies that $S$ contains an involution $g$ such that $C_S(g)\geq Spin_9(p^e)$. Thus
$$|\Aut(S): C_{\Aut(S)}(g)|_{p'} \leq e((p^e)^{8}+ (p^e)^4+1).$$
Now $f|e$ and $n\geq \frac{26e}{f}$ \cite[Propositions 5.4.6 and 5.4.12]{kl} and we are done.

Suppose that $S$ is a quasisimple cover of $E_6(p^e)$ (resp. ${^2E_6}(p^e)$); then \cite[Table 4.5.1]{gorenstein} implies that $S$ contains an involution $g$ such that $C_S(g)\geq \mathrm{Spin}_{10}^+(p^e)$ (resp. $\mathrm{Spin}_{10}^-(p^e)$). Thus $|\Aut(S): C_{\Aut(S)}(g)|_{p'}$ is at most
$$6e((p^e)^{8}+ (p^e)^4+1)((p^e)^{6}+ \epsilon(p^e)^3+1)((p^e)^{2}+ \epsilon(p^e)+1).$$
Now $f|2e$ and $n\geq \frac{27e}{f}$ \cite[Propositions 5.4.6 and 5.4.12]{kl} and we are done.

Suppose that $S=E_7(p^e)$; then \cite[Table 4.5.1]{gorenstein} implies that $S$ contains an involution $g$ such that $C_S(g)\geq \SL_8(p^e)/(4,q-1)$ or $\SU_8(q)/(4,q+1)$. Thus $|\Aut(S): C_{\Aut(S)}(g)|_{p'}$ is at most
$$2e((p^e)^{8}+ (p^e)^4+1)((p^e)^{12}+ (p^e)^6+1)((p^e)^{7}+ 1)((p^e)^{5}+1)((p^e)^{3}+1).$$
Now $f|e$ and $n\geq \frac{56e}{f}$ \cite[Propositions 5.4.6 and 5.4.12]{kl} and we are done.

Suppose that $S=E_8(p^e)$; then \cite[Table 4.5.1]{gorenstein} implies that $S$ contains an involution $g$ such that $C_S(g)\geq 2.(\PSL_2(p^e)\times E_7(p^e))$. Thus $|\Aut(S): C_{\Aut(S)}(g)|_{p'}$ is at most
$$e((p^e)^{10}+ 1)((p^e)^{12}+ 1)((p^e)^{6}+ 1)((p^e)^{30}-1).$$
Now $f|e$ and $n\geq \frac{248e}{f}$ \cite[Propositions 5.4.6 and 5.4.12]{kl} and we are done.

Finally suppose that $S={^3D_4}(p^e)$; then \cite[Table 4.5.1]{gorenstein} implies that $S$ contains an involution $g$ such that $C_S(g)\geq (\SL_2(p^{3e})\circ \SL_2(p^e)).2$. Thus
$$|\Aut(S): C_{\Aut(S)}(g)|_{p'} \leq 3e((p^e)^8+(p^e)^4+1).$$
Now either $f|e$ and $n\geq \frac{24e}{f}$ \cite[Comments after Proposition 5.4.6; Proposition 5.4.12]{kl}, or else $f|3e$ and $n\geq\frac{24e}f$ \cite[Comments after Proposition 5.4.6; Proposition 5.4.13]{kl} and we are done.
\end{proof}

\subsection{Odd-order subgroups of $\GL_n(q)$}

We are left with Case 2 of Corollary~\ref{c: breakdown}. In order to deal with this we must prove some facts about odd order subgroups of $\GL_n(q)$, where $q=p^f$ and $p\geq 7$.

We begin by quoting a result of P\'alfy \cite{palfy}.

\begin{lemma}\label{l: oddsn}
Let $H$ be a primitive subgroup of $S_n$ the symmetric group on $n$ letters. If $n>1$ and $H$ is solvable, then $|H|\leq 24^{-1/2}n^c$ where $c=3.24399\cdots$.
\end{lemma}

It is convenient to record a (basically weaker) variation of P\'alfy's result as follows.

\begin{lemma}\label{l: oddsn2}
 Let $H$ be a primitive subgroup of $S_n$, the symmetric group on $n$ letters. If $H$ has odd order, then $|H|\leq 2^{n-1}$.
\end{lemma}
\begin{proof}
 Since $|H|$ is odd, $H$ is soluble, and the bound in Lemma~\ref{l: oddsn} applies. This yields the result for $n\geq 10$. For $n\leq 9$ we refer to \cite{atlas} to confirm the result in all cases.
\end{proof}

The next couple of results are aimed at giving an upper bound for $|H|_{p'}$ where $H$ is an odd-order subgroup of $\GL_n(q)$. The background idea here is that a prototypical `large' odd-order subgroup $H<\GL_n(q)$ contains a torus as a large normal subgroup. More precisely, we think of $H$ as equalling a direct product $H_1\times H_2 \times \cdots H_k$ (for some $k\leq n$) where, for each $i$, the group $H_i$ acts semi-linearly on some $n_i$-dimensional vector space over the field $\mathbb{F}_{q^{a_i}}$ (and $n=\sum_{i=1}^k a_in_i$). What is more $H_i$ normalizes a maximal split torus $T_i$ of the group $\GL_{n_i}(q^{a_i})$. (i.e. $H_i$ preserves a decomposition of the associated $n_i$-dimensional vector space over $\mathbb{F}_{q^{a_i}}$ into $1$-dimensional subspaces).

We know that $|T_i|<q^{a_in_i}$; we must study the size of $|H_i:H_i\cap T_i|$. In the action of $H_i$ on the aforementioned decomposition, $H_i$ can induce field automorphisms of order at most $a_i$ on each $1$-dimensional subspace - this allows for a possible index of $a_i^{n_i}$. In addition, $H_i$ can permute the $1$-dimensional subspaces. We can reduce to the case where the action on the set of subspaces is primitive, in which case Lemma~\ref{l: oddsn2} allows for a possible index of $2^{n_i-1}$. This is more than cancelled out by the fact that $|T_i: H_i\cap T_i|\geq 2^{n_i}$. This reasoning motivates the following definition.

For a fixed positive integer $n$, we define a {\it partition of $n$} to be a positive list of integers, $\lambda$, that sum to $n$; i.e.
\begin{equation}\label{lambda}
\lambda = [\underbrace{1,\dots,1}_{n_1},\underbrace{2,\dots,2}_{n_2}, \underbrace{3,\dots,3}_{n_3},\dots ]
\end{equation}
where $n=\sum_i in_i$. For a partition $\lambda$ of this form define 
\[
\begin{aligned}
\ell_\lambda&=|\{i \mid n_i > 0 \}|, \textrm{ and }
c_\lambda&=\frac{|1^{n_1}2^{n_2}3^{n_3}\cdots|_{2'}}{2^{\ell_\lambda}}. 
\end{aligned}
\]
When we write $1^{n_1}2^{n_2}3^{n_3}\cdots$ here, we omit all terms $i^{n_i}$ for which $n_i=0$; this leaves a finite number of terms in our definition. Now define 
\[c_n = \max\{c_\lambda \mid \lambda \textrm{ is a partition of } n\}.\]

\begin{lemma}\label{l: cn}
\mbox{}
\begin{enumerate}
\item If $n=mk$, for $1<m,k\in \mathbb{Z}^+$, then $c_n\geq (c_m)^k2^{k-1}$;
\item If $n=r+s$, for $0<r,s\in\mathbb{Z}^+$, then $c_n\geq c_{r}c_{s}$;
\item Suppose that $\lambda$ is a partition of $n$ for which $c_n=c_\lambda$. \begin{itemize}                                                                               \item If $n\equiv 0\pmod 3$, then $\lambda = [3,\dots,3]$ and
$c_n =\frac12 3^{\frac{n}{3}}.$
\item If $n\equiv 1\pmod 3$, then $\lambda = [1,3,\dots,3]$ and
$c_n =\frac14 3^{\frac{n-1}{3}}.$
\item If $2<n\equiv 2\pmod 3$, then $\lambda = [3,\dots,3,5]$ and
$c_n =\frac54 3^{\frac{n-2}{3}}.$
\item If $n=2$, then $\lambda=[1,1]$ or $[2]$ and
$c_n= \frac 12$.
 \end{itemize}
In particular
\[\frac14 3^{\frac{n-1}{3}}\leq c_n\leq  \frac12 3^{n/3} < 7^{\min\{n/5, n/2-1\}}.
\]
\end{enumerate}
 \end{lemma}
\begin{proof}

Let $n=mk$ and let 
\[\lambda_m=[\underbrace{1,\dots,1}_{m_1},\underbrace{2,\dots,2}_{m_2}, \underbrace{3,\dots,3}_{m_3},\dots ]\]
be a partition of $m$ for which $c_m = c_{\lambda_m}$ and set $\ell:=\ell_{\lambda_m}$. Now let $\lambda$ be the partition of $n$ achieved by joining $k$ copies of $\lambda_m$ together. We have
\[
 \begin{aligned}
  c_{\lambda_m}&=\frac{|1^{m_1}2^{m_2}3^{m_3}\cdots|_{2'}}{2^{\ell}}, \textrm{ and } \\
  c_{\lambda}&=\frac{|1^{km_1}2^{km_2}3^{km_3}\cdots|_{2'}}{2^{\ell}}
  &=(c_{\lambda_m})^k\cdot 2^{\ell k - \ell}
 \end{aligned}
\]
and (1) follows.

For (2) suppose that $n=r+s$, and let $\lambda_r$ (resp. $\lambda_s$) be a partition of $r$ (resp. $s$) for which $c_r = c_{\lambda_r}$ (resp. $c_s=c_{\lambda_s}$). Now let $\lambda$ be the partition of $n$ achieved by joining $\lambda_r$ to $\lambda_s$; then $c_\lambda \geq c_{\lambda_r}c_{\lambda_s}$.

We are left with (3). Suppose that $\lambda$ is the partition of $n$ which is defined by \eqref{lambda} and for which $c_\lambda=c_n$.

Suppose that $1n_1+2n_2+4n_4\geq 3$. Consider the partition, $\lambda_0$, obtained by removing all occurrences of $1,2$ and $4$ and instead inserting as many $3$'s as possible, along with either zero, one or two $1$'s. Now compare $c_\lambda$ and $c_{\lambda_0}$: it is clear that the numerator of $c_{\lambda_0}$ is at least three times as large as the numerator of $c_{\lambda}$. On the other hand $\ell_{\lambda_0}\leq \ell_{\lambda}+1$. We conclude that $c_{\lambda_0}\geq c_{\lambda}$. This is a contradiction, hence we may assume that $1n_1+2n_2+4n_4<3$,

Now suppose that $n_k>0$ for some $k>4$ and set $m:=kn_k$. Consider the partition $\lambda_1$, obtained by removing all occurrences of $k$ and instead inserting as many $3$'s as possible, along with either zero, one or two $1$'s. We compare $c_\lambda$ and $c_{\lambda_1}$ as before. As before $\ell_{\lambda_1}\leq \ell_{\lambda}+1$. Thus $c_{\lambda_1}> c_{\lambda}$ unless
\[
 \begin{aligned}
  &\frac12 3^{\frac{m-2}{3}} \leq k^{m/k} \\
\Longleftrightarrow & \frac 12 \mathrm{e}^{\frac{(\log 3)(m-2)}{3}} \leq \mathrm{e}^{m(\frac{\log k}{k})} \\
\Longleftrightarrow & \frac{(\log 3)(m-2)}{3\log 2} < m(\frac{\log k}{k}) \\
\Longrightarrow & m<k \textrm{ or } m\leq 5.
 \end{aligned}
\]
Since $m\geq k$ by definition, we conclude that, unless $m=k=5$, $c_{\lambda_1}> c_{\lambda}$ which is a contradiction. Thus by repeating this step (and the previous if necessary) we conclude that $\lambda=1^{n_1}2^{n_2}3^{n_3}5^{n_5}$ where $n_1+2n_2\leq 2$ and $n_5\leq 1$.

Suppose, next, that $n_5=1$ and $n_1+n_2>0$. Consider the partition $\lambda_2$, obtained by removing the $5$ and one of the numbers less than $3$, and instead inserting two $3$'s, along with either zero or one $1$. We compare $c_{\lambda_2}$ and $c_\lambda$: it is clear that the numerator of $c_{\lambda_2}$ is strictly larger than that of $c_{\lambda}$, while the denominator is unchanged. This is a contradiction, and we conclude that if $n_5=1$, then $n_1+n_2=0$.

Suppose, finally, that $n_3>0$ and $n_1+2n_2=2$. Consider the partition $\lambda_3$, obtained by removing a $3$ and the numbers less than $3$, and instead inserting a $5$. We compare $c_{\lambda_3}$ and $c_\lambda$: it is clear that the numerator of $c_{\lambda_3}$ is strictly larger than that of $c_{\lambda}$, while the denominator is either smaller or the same. This is a contradiction, and so we conclude that if $n_3>0$ then $n_1+2n_2\leq 1$.

The result follows.
\end{proof}

\begin{lemma}\label{l: oddgln}
Suppose $H<GL_n(q)$ where $q=p^f$ and $p\geq 7$. If $H$ has odd order then $|H|_{p'}< q^n c_n$.
\end{lemma}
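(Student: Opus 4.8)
The plan is to argue by strong induction on $n$, uniformly over all fields $q=p^f$ (with $p\geq 7$ fixed), feeding the Aschbacher decomposition of $H$ supplied by Corollary \ref{c: breakdown} into the recursive inequalities for $c_n$ proved in Lemma \ref{l: cn}. Two cases of Corollary \ref{c: breakdown} vanish at once: since $|H|$ is odd, the Feit--Thompson theorem makes $H$ soluble, so $H$ cannot contain $\Omega=SL_n(q)$ (which has even order for $n\geq 2$) and cannot be almost quasi-simple; thus conclusions (1) and (4) are void, while a classical form subgroup arising from $\curly_8$ or from the small rank situe is simply re-fed into the induction through its \emph{own} Aschbacher decomposition. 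After replacing $q$ by the smallest subfield over which the projective image of $H$ is realised, we may also assume $H$ lies in no $\curly_5$-subgroup. The base cases are direct: for $n=1$, $|H|$ is odd and divides $q-1$, so $|H|_{p',\heartsuit}\leq (q-1)/2<qc_1$; for $n=2$ the odd-order subgroups lie in a torus or a Borel and have $p'$-order at most $(q-1)^2<q^2=q^2c_2$ (cf. the argument of Lemma \ref{l: gl2}).

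For the inductive step I take a maximal $M\geq H$ and run through the families of Corollary \ref{c: breakdown2}. The governing observation is that, $|H|$ being odd, every projection of $H$ into a linear factor $GL_{m}(q)$ of $M$ again has odd order, so by induction it is bounded by $q^{m}c_{m}$ --- an estimate \emph{linear} in the dimension --- rather than by the full order of that factor. In class $\curly_1$, $H$ maps, modulo a unipotent ($p$-)radical invisible to the $p'$-part, into $GL_m(q)\times GL_{n-m}(q)$, giving $|H|_{p',\heartsuit}<q^m c_m\cdot q^{n-m}c_{n-m}=q^n c_m c_{n-m}\leq q^n c_n$ by Lemma \ref{l: cn}(3). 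The genuinely primitive classes carry large slack: in $\curly_3$ one lands in $GL_m(q^r).r$ with $m<n$, handled by the inductive bound over the \emph{larger} field $q^r$ together with the factor-of-$2$ saving in $q^r-1$ and Lemma \ref{l: cn}(2); in $\curly_4$ and $\curly_7$ the exponent of $q$ contributed by the odd projections is at most $n_1+n_2$ (resp. $mt$), far below $n=n_1n_2$ (resp. $m^t$); and in $\curly_6$ one simply reuses the bound $|M|_{p',\heartsuit}<q^{n-1}$ already established inside the proof of Lemma \ref{l: relevant groups}, which suffices since $c_n\geq 1$ for $n\geq 3$.

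The delicate case is the imprimitive class $\curly_2$, where $M\cong GL_m(q)\wr S_t$ with $n=mt$ and one must control the odd-order permutation group $K$ induced by $H$ on the blocks. Here a general odd-order $K\leq S_t$ may be as large as an iterated wreath product, so the primitive bound of Lemma \ref{l: oddsn} does \emph{not} apply directly. The fix is to first note that if $H$ is intransitive on the blocks then $H$ is reducible and we are in $\curly_1$; otherwise we pass to the coarsest $H$-invariant block decomposition, on which $H$ does act primitively, say with $s$ blocks each of dimension $m'=n/s<n$. Then Lemma \ref{l: oddsn} gives $|K|<s^{\log_2 s}$, each block projection is bounded inductively by $q^{m'}c_{m'}$, and
$$|H|_{p',\heartsuit}<(q^{m'}c_{m'})^{s}\,s^{\log_2 s}=q^{n}(c_{m'})^{s}s^{\log_2 s}\leq q^{n}c_{m's}=q^nc_n,$$
the last inequality being exactly Lemma \ref{l: cn}(2).

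I expect the main obstacle to be precisely this orchestration of the permutation analysis: selecting the block system that makes the top action primitive, routing all the nested imprimitivity through the inductive hypothesis on $GL_{m'}(q)$, and verifying that the $2^{\sum_i n_i}$ in the denominator of $c_\lambda$ is always enough to keep every inequality \emph{strict} (this is what supplies the factor-of-$2$ savings used above in $\curly_3$ and in the base cases). Once this machinery is in place, the classes $\curly_4$, $\curly_6$, $\curly_7$, $\curly_8$ and the residual classical-form cases are routine bookkeeping against the inductive bound.
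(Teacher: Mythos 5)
Your proposal follows essentially the same route as the paper's proof: induction on $n$ through the Aschbacher families, with Feit--Thompson eliminating the case $H\geq\Omega$ and the almost quasi-simple case, the $p'$-subscript absorbing the unipotent radical in $\curly_1$, reduction to a subfield for $\curly_5$, and --- the key step, identical to the paper's --- passing in $\curly_2$ to a coarser block decomposition on which $H$ acts primitively, so that Lemma \ref{l: oddsn} and Lemma \ref{l: cn}(2) combine to give $(q^{m'}c_{m'})^{s}s^{\log_2 s}\leq q^n c_n$. The only structural difference is cosmetic: the paper first packages $\curly_6$, $\curly_7$ and $\curly_8$ into Lemma \ref{l: relevant groups} (so that its proof of the present lemma handles just five cases), whereas you run Corollary \ref{c: breakdown} directly and reuse the $\curly_6$/$\curly_7$ estimates from inside that lemma's proof; this amounts to the same computation.

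Two local repairs are needed. First, your $n=2$ base case is wrong as written: $c_2=\tfrac12$, so $q^2c_2=\tfrac12 q^2\neq q^2$, and moreover $(q-1)^2>\tfrac12 q^2$ for all $q\geq 7$, so the chain $(q-1)^2<q^2=q^2c_2$ fails on both counts. The correct estimate must exploit the oddness of $H$, as in Lemma \ref{l: gl2} via Lemma \ref{l: psl2}: an odd-order subgroup meets each relevant cyclic factor in its odd part, giving for instance $|H|_{p',\heartsuit}\leq \tfrac{q-1}{2}\cdot\tfrac{q+1}{2}<\tfrac12 q^2$. This is exactly the factor-of-$2$ saving you promise in your closing paragraph but do not actually deploy at this point. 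Second, the small rank situe cannot be ``re-fed into its own Aschbacher decomposition'': Theorem \ref{t: pglmax} explicitly excludes it (for $GO_4^+(q)$ the group $\overline{S}$ is not simple, so the subgroup theorem does not apply). Since $p\geq 7$, the only surviving small-rank case with $n>2$ is $n=4$, $H\leq GO_4^+(q)$, and it must be handled directly --- as the paper does --- using $\Omega_4^+(q)\cong SL_2(q)\circ SL_2(q)$ and Lemma \ref{l: psl2}, which yields $|H|_{p',\heartsuit}<\tfrac12(q-1)(q+1)^2<q^4c_4$. Both fixes are routine, so your argument is sound in structure and, once patched, coincides with the paper's.
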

\begin{proof}
 The result is immediate for $n=1$. Lemma~\ref{l: psl2} implies that, for $n=2$, $|H|_{p'}<q+1$ and the result follows.  For $n>2$, we refer to Lemma \ref{l: relevant groups}. Note that $H$ cannot lie in Case 3 by the Feit-Thompson theorem; Case 1 is also impossible and the result is immediate in Case 5. If $H$ lies in Case 4 then $n=4$ and $H\leq GO_4^+(q)$; then $|H|_{p'}\leq \frac12(q-1)|H\cap\Omega_4^+(q)|_{p'}$. Now $\Omega_4^+(q)\cong SL_2(q)\circ SL_2(q)$ and, again, Lemma \ref{l: psl2} implies that $|H\cap\Omega_4^+(q)|_{p'}<(q+1)^2$. Thus $|H|_{p'}<\frac12(q-1)(q+1)^2$ and we are done. 

We are left with the possibility that $H$ lies in Case 2. We proceed by induction; the base case, $n=1$, is already done. Now assume inductively that the statement is true for $H<GL_m(q)$ where $m<n$. 

If $H$ lies in Case 5 then $H\leq GL_m(q)\circ GL_m(q)$. Induction implies that 
$$|H|_{p'}<(q^mc_m)^2 \leq q^{2m}(c_m)^2<q^{m^2}(c_m)^2.$$
Now Lemma~\ref{l: cn} (2) yields the result.

Now consider Case 2. First observe that if $H\leq M\in\curly_5(GL_n(q))$ then $M=GL(n,q_0)\circ Z(GL(n,q))$ where $q=q_0^a$. Then it is sufficient to prove the result for $H\cap \GL_n(q_0)$ considered as a subgroup of $\GL_n(q_0)$. Thus we assume that $H$ does not lie in a maximal subgroup of type 5.

If $H$ lies in a parabolic subgroup of $GL_n(q)$ then $H/O_p(H)<GL_m(q)\times GL_{n-m}(q)$. Induction implies that 
\[|H|_{p'}< q^mc_m\times c^{n-m}c_{n-m}.\]
Now Lemma~\ref{l: cn} (2) yields the result.

If $H\leq M\in\curly_4(GL_n(q))$ then $M=(GL_{r}(q)\circ GL_{s}(q))$ for $n=rs$ with $2\leq r<s$. Induction implies that
\[|H|_{p'}<q^{r}c_{r}q^{s}c_{s} = q^{r+s}c_{r}c_{s}.\]
Lemma~\ref{l: cn} (2) yields the result.

If $H\leq M\in\curly_2(GL_n(q))$ then $M=GL_m(q)\wr S_t$ where $n=mt, t\geq 2$. We assume that $H$ acts transitively on the $m$-space decomposition otherwise $H$ lies in a parabolic subgroup. In fact we assume that $H$ acts primitively on the $m$-space decomposition since otherwise $H$ lies in a maximal subgroup $M_1=GL_{m_1}(q)\wr S_{t_1}$, $M_1\in\curly_2(GL_n(q))$ with $t_1<t$ and $H$ acts primitively on this decomposition. 

Induction and Lemma \ref{l: oddsn2} imply that
\[|H|_{p'}<(q^mc_m)^t 2^{t-1}.\]
Lemma~\ref{l: cn} (1) yields the result.

If $H\leq M\in\curly_3(GL_n(q))$ then $M=GL_m(q^r).r$ where $n=mr$ and $r$ is a prime. Induction implies that
$$|H|_{p'}<((q^r)^m)c_m.r=q^nc_m|r|_{2'}.$$
If $r=2$, then it is enough to check that $c_m\leq c_{2m}$ which is easy using the formulae given in Lemma~\ref{l: cn} (3). If $r\geq 3$, then one can check the result directly using Lemma~\ref{l: cn} (3).
\end{proof}

It is worth combining results of the two previous lemmas to give the following immediate corollary that will be used repeatedly. (The corollary is a weak version of Proposition~\ref{p: oddgln} which was stated in the introduction.)

\begin{corollary}\label{c: oddgln}
Suppose $H$ is an odd order subgroup of $\GL_n(q)$ where $q=p^f, p\geq 7$ and $n\geq 2$. Then $|H|_{p'}< q^{\frac32n-1}$.
\end{corollary}

\begin{lemma}\label{L: second gap}
Let $H$ be an odd order subgroup of $\GL_n(q)$ and let $\sigma$ be an involutory field automorphism of $\GL_n(q)$. Suppose that $H$ is normalized by $g$, an involution in $\langle \GL_n(q), \sigma\rangle\backslash \GL_n(q)$. Then
$$|H:C_H(g)|_{p'}\leq q^{n-\frac12}+q^{n-1}+\dots+q+q^{\frac12}+1.$$
\end{lemma}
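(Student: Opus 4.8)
The plan is to reduce to the case $g=\sigma$ standard, and then induct on $n$, mirroring the proof of Lemma~\ref{l: oddgln} but with the sharper target function
$$R(n):=\frac{q^n-1}{q_0-1}=q_0^{2n-1}+q_0^{2n-2}+\dots+q_0+1$$
in place of $q^nc_n$, where $q=q_0^2$. First I would normalise the involution: every involution in $\langle GL_n(q),\sigma\rangle\setminus GL_n(q)$ has the form $x\sigma$ with $x^\sigma=x^{-1}$, and a standard application of Lang's theorem shows all such elements are conjugate under $GL_n(q)$, so after conjugating I may assume $g=\sigma$ is the standard involutory field automorphism with $C_{GL_n(q)}(\sigma)=GL_n(q_0)$. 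The gain is that the right-hand side is \emph{exactly} $R(n)$ (since $q^{n-\frac12}+\dots+q^{\frac12}+1=\frac{q_0^{2n}-1}{q_0-1}$), so the claim becomes $|H:C_H(g)|_{p',\heartsuit}<R(n)$, with $C_H(g)=H\cap GL_n(q_0)$.

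Next I would set up a size dichotomy. If $|H|_{p',\heartsuit}<R(n)$ the bound is immediate, since $|H:C_H(g)|$ divides $|H|$. Because $R(n)>q^{n-1}+\dots+q+1$, I may otherwise assume $|H|_{p',\heartsuit}\ge R(n)>q^{n-1}+\dots+q+1$, which is precisely the hypothesis of Lemma~\ref{l: relevant groups}. As $H$ has odd order, the Feit--Thompson theorem rules out the cases where $H$ contains $SL_n(q)$ or is almost quasi-simple, leaving exactly the geometric configurations already handled in Lemma~\ref{l: oddgln}: a member of $\curly_1$--$\curly_5$, the $GO_4^+(q)$ case with $n=4$, or $(GL_m(q)\circ GL_m(q)).2$ with $n=m^2$. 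The base cases $n=1,2$ I would do directly: for $n=1$ the action of $\sigma$ on $\Fq^\ast$ gives $|H:C_H(g)|\le q_0+1=R(1)$ (and the factor $2\mid q_0+1$ dropped by $\heartsuit$ makes this strict), and this single factor-$q_0$ saving over $|H|$ is exactly what propagates; for $n=2$ the bound $R(2)=(q+1)(q_0+1)$ follows from Lemma~\ref{l: psl2} after projecting to $PSL_2(q)$, whose odd subgroups are of subfield, Borel, or torus-normalizer type.

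For the inductive step I would carry $g=\sigma$ through each reduction of Lemma~\ref{l: oddgln}. Applying the index identity of Lemma~\ref{L: oddnormal} to $\langle H,g\rangle$ (valid here because $|H|$ is odd, so $g$ acts coprimely), the unipotent radical in the parabolic case contributes only a power of $p$ and is killed by the subscript $p'$, and $|H:C_H(g)|_{p',\heartsuit}$ factors through the projections of $H$ to the smaller general linear groups, to which the inductive hypothesis applies. The numerics then close because $R$ is super-multiplicative: $R(a)R(b)<R(a+b)$ with a factor $\tfrac{1}{q_0-1}$ to spare, which comfortably absorbs the wreath contribution $t^{\log_2 t}$ (using Lemma~\ref{l: oddsn} and $q_0\ge7$, exactly as in Lemma~\ref{l: cn}) in class $\curly_2$ and the extension factor $r$ in class $\curly_3$.

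The main obstacle is ensuring that $\sigma$ respects the geometric structure that $H$ preserves: since $\sigma$ merely \emph{normalises} $H$ it permutes the $H$-invariant configurations but need not fix a chosen one. For an $H$-invariant subspace $W$ I would replace it by one of the $\sigma$-invariant submodules $W\cap W^\sigma$ or $W+W^\sigma$; the residual possibility $V=W\oplus W^\sigma$ with $\sigma$ interchanging the summands is disposed of by projecting $H$ isomorphically into $GL_{n/2}(q)$ and invoking Lemma~\ref{l: oddgln}, since $q^{n/2}c_{n/2}<R(n)$ once $q_0\ge7$. The same swap phenomenon in $\curly_2$, $\curly_4$ and the case-$5$ group is treated identically. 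The genuinely delicate point is the field-compatibility in $\curly_3$ and $\curly_5$: one must track how the order-two automorphism $\sigma$ restricts to the relevant sub- or extension field. If it acts trivially there, then $\sigma$ centralises $H$ and there is nothing to prove; if it restricts to an involutory field automorphism, the inductive hypothesis over the new field applies. The awkward intermediate behaviour is $\curly_3$ with $r=2$, where $\sigma$ cannot be $\mathbb{F}_{q^2}$-semilinear of order two (an $\mathbb{F}_{q^2}$-semilinear extension of $\sigma$ would have order four); this forces $\sigma$ to move the $\mathbb{F}_{q^2}$-structure, so $H$ admits two such structures and the swap argument above again applies. Verifying that this casework is exhaustive, and that the swap bounds never exceed $R(n)$, is the part requiring the most care.
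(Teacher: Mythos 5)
Your overall route coincides with the paper's: you normalise $g$ to $\sigma$ (the paper cites \cite[Proposition 1.1]{bgl} where you invoke Lang's theorem --- the same reduction), you treat $n=1,2$ via Lemma \ref{l: psl2}, and you induct through the Aschbacher classes, bounding factors swapped by $g$ via the odd-order estimates of Lemma \ref{l: oddgln} and fixed factors by induction; your reading of the right-hand side as $R(n)=\frac{q^n-1}{\sqrt{q}-1}$ and the super-multiplicativity numerics are exactly what the paper uses. The one structural difference, however, produces a genuine gap. The paper applies Corollary \ref{c: breakdown} with $X=\langle GL_n(q),\sigma\rangle$, so the class-$\curly_i$ overgroup of $\langle H,g\rangle$ contains $g$, and every stabilised geometric configuration is automatically $g$-stable; in particular, in class $\curly_3$ with $r=2$ the paper observes that an element of the outer coset normalising $M$ would induce an order-four field twist on $GL_{n/2}(q^2)$, so no involution exists there and the case vanishes. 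You instead classify $H$ alone inside $GL_n(q)$ (via the size dichotomy against $R(n)$, which is fine) and restore $\sigma$-compatibility by hand --- and at precisely this $r=2$ point your patch fails as stated.

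Concretely: when $\sigma$ moves the $H$-invariant $\mathbb{F}_{q^2}$-structure you assert that ``the swap argument above again applies'', but that argument was built for $V=W\oplus W^\sigma$ with $\sigma$ interchanging the summands, and a pair of field structures provides no direct-sum decomposition to swap. What you actually have is two embedded copies $J$ and $J^\sigma$ of $\mathbb{F}_{q^2}$ inside ${\rm End}_{\mathbb{F}_q}(V)$, both centralised by $H$ (note $|H|$ odd forces $H$ into the $\mathbb{F}_{q^2}$-linear part of $GL_{n/2}(q^2).2$). If $J$ and $J^\sigma$ commute elementwise, the algebra they generate is a quotient of $\mathbb{F}_{q^2}\otimes_{\mathbb{F}_q}\mathbb{F}_{q^2}\cong\mathbb{F}_{q^2}\times\mathbb{F}_{q^2}$; distinctness forces the full product, whose idempotents split $V=V_1\oplus V_2$ with $\sigma$ swapping the summands, and only then does your swap argument run. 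If $J$ and $J^\sigma$ do not commute, $H$ centralises a noncommutative algebra, and nothing in your sketch converts that into a bound on $|H:C_H(g)|_{p',\heartsuit}$; this sub-case is simply unaddressed. The clean repair is the paper's: run the subgroup classification on $\langle H,g\rangle$ inside $\langle GL_n(q),\sigma\rangle$ from the outset, which eliminates $r=2$ outright and also makes your remaining invariance worries for $\curly_1,\curly_2,\curly_4,\curly_5$ (where your $W\cap W^\sigma$, $W+W^\sigma$ patching is correct but unnecessary) disappear. One further small slip: in the genuine swap case the projection of $H$ into $GL_{n/2}(q)$ need not be injective; the correct statement, as in the paper's computation for $g=(A,B)\tau\sigma$, is that $|H:C_H(g)|$ is at most the order of the odd-order image of $H$ in one factor, which is then bounded by Lemma \ref{l: oddgln}.
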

\begin{proof}
Note that, by \cite[Proposition 4.9.1]{gorenstein}, we know that $g$ is $\GL_n(q)$-conjugate to $\sigma$ or $(-I, \sigma)$, hence we take $H$ to be normalized by $\sigma$. Observe that $C_{Z(G)}(g)=(\sqrt{q}-1)$ (which yields the result for $n=1$) and that $q\geq 49$. 

For $n=2$, it is clear that $|H:C_H(g)|_{p'}< (\sqrt{q}+1)k$ where $k=\max\{|H_1|_{p'}: H_1\in \PSL_2(q)\}$; we refer to Lemma \ref{l: psl2}, and the statement follows immediately. 

For $n>2$ we refer to Corollary \ref{c: breakdown}, and observe that Cases 1 and 4 are impossible. If we are in Case 5 then $n=4$ and $H\leq GO_4^+(q)$. Now $\Omega_4^+(q)\cong SL_2(q)\circ SL_2(q)$; thus Lemma \ref{l: psl2} implies that $|H\cap \Omega_4^+(q)|_{p'}< (q+1)^2$ which yields the result immediately.

Suppose next that we are in Case 2 or 3. In particular suppose first that, either Case 3 holds, or $H\leq M\in\curly_i(\langle GL_n(q), \sigma\rangle)$ for $i=6,7$. Then Lemma \ref{l: relevant groups} implies that $M=(GL_n(q)\circ GL_n(q)).(\langle \sigma \rangle \times 2)$ and $H<N=GL_{\sqrt{n}}(q)\circ GL_{\sqrt{n}}(q)$. $N$ is normalized in $GL_n(q)$ by $\tau$, an involution which swaps the two copies of $GL_{\sqrt{n}}(q)$. Thus $g$ may take two forms.

Firstly suppose that $g=(A,B)\sigma$ where $A,B\in GL_{\sqrt{n}}(q)$. Now $N$ contains two normal subgroups isomorphic to $\GL_{\sqrt{n}}(q)$ that are normalized by $g$ and, by \cite[Proposition 1.1]{bgl}, there is only one class of involutions in each copy of $\langle \GL_{\sqrt{n}}(q), g\rangle$. Then, by induction,
$$|H:C_H(\sigma)|_{p'}\leq (q^{\sqrt{n}-\frac12}+\dots + q+q^{\frac12}+1)^2<q^{n-\frac12}+q^{n-1}+\dots+q+q^{\frac12}+1.$$

Secondly suppose that $g=(A,B)\tau\sigma$. Since $g$ is an involution, $B=\pm A^{-\sigma}$. Then, for $(X,Y)\in N$, 
$$(X,Y)^g = (AY^\sigma A^{-1}, A^{-\sigma}X^\sigma A^\sigma).$$
Thus $(X,Y)$ will be centralized by $g$ if and only if $X=AY^\sigma A^{-1}$. Thus $|N:C_N(g)|=|\GL_{\sqrt{n}}(q)|$ and so
\[|H:C_H(g)|_{p'}\leq |\GL_{\sqrt{n}}(q)|_{p'} < q^{\frac{\sqrt{n}(\sqrt{n}+1)}2}
 \]
and the result follows.

We are left with the possibility that $H\leq M\in\curly_i(\langle \GL_n(q), \sigma\rangle)$ for $i=1,\dots, 5$. In this case we proceed by induction; the base case has already been attended to.

If $M\in\curly_5(\langle \GL_n(q), \sigma\rangle)$, then $|H|_{p'}<q|H_0|_{p'}$, where $H_0$ is an odd order subgroup of $GL_n(q_0)$ where $q=q_0^a$ for some integer $a>1$. Then Corollary~\ref{c: oddgln} implies that
$$|H|_{p'}\leq q|H_0|_{p'}<q^{\frac{n+\sqrt{n}}2+1}$$
and the result follows.

If $M\in\curly_1(\langle \GL_n(q), \sigma\rangle)$, then let $O_p(M)$ be the largest normal $p$-group of $M$ and let $L=H/(H\cap O_p(M))$. Then
$$|H:C_H(g)|_{p'} = |L:C_L(g_L)|_{p'}$$
for some involution $g_L\in L$. Now $L\leq \GL_m(q)\times \GL_{n-m}(q)$ and we apply induction:
\begin{equation*}
\begin{aligned}
|H:C_H(g)|_{p'}&< (q^{m-\frac12}+\dots+q^{\frac12}+1)(q^{n-m-\frac12}+\dots+q^{\frac12}+1) \\
&<q^{n-\frac12}+q^{n-1}+\dots+q+q^{\frac12}+1. 
\end{aligned}
\end{equation*}

A similar approach can be taken if $M\in\curly_4(\langle \GL_n(q), \sigma\rangle)$. Then $M\cong (\GL_m(q)\circ \GL_{t}(q))\langle\sigma\rangle$ for $n=mt$ and, once more, induction gives the result.

Now suppose that $M\in\curly_3(\langle \GL_n(q), \sigma\rangle)$, so $M\cong (\GL_{\frac{n}{r}}(q^r).r)\langle\sigma\rangle$ with $r$ prime. If $r=2$, then any element from $\langle \GL_n(q),\sigma\rangle \backslash \GL_n(q)$ which normalizes $M$ will act as a field automorphism of order $4$ on $\GL_{\frac{n}{r}}(q^r)$. In particular such an element cannot be an involution. On the other hand if $r$ is odd then $\langle M, \sigma\rangle\cong \GL_{\frac{n}{r}}(q^r).2r$ and $g$ acts as an involutory field automorphism on $\GL_{\frac{n}{r}}(q^r)$. Then Lemma \ref{L: oddnormal} implies that
$$|H:C_H(g)|_{p'} = |H\cap \GL_{\frac{n}{r}}(q^r):C_{H\cap \GL_{\frac{n}{r}}(q^r)}(g)|_{p'}$$
and induction gives the result.

Finally consider the possibility that $M\in\curly_2(\langle \GL_n(q), \sigma\rangle)$. Thus $H<\langle(\GL_m(q)\wr S_t),\sigma\rangle$ with $t\geq 2$. Just as for Lemma \ref{l: oddgln}, we assume that $\langle H,\sigma\rangle$ acts primitively on the $m$-space decomposition. Take $g=s\sigma$ and note that $s$ acts as a (possibly trivial) involution on the $m$-space decomposition.\label{ten} 

We need to consider two situations which closely mirror the two cases discussed for $\curly_7$. First consider $C_{S}(g)$ where $S$ is the projection of $H$ onto a particular $\GL_m(q)$ that is fixed by $s$. By induction $|S: C_{S}(g)|_{p'}<q^{m-\frac12}+\dots+q^{\frac12}+1.$ Alternatively if $\GL_m(q)\times \GL_m(q)$ are swapped by $s$, and $S$ is the projection of $H$ onto $\GL_m(q)\times \GL_m(q)$ then it is clear that $|S: C_{S}(g)|_{p'}$ is at most the size of an odd-order subgroup in $\GL_m(q)$. Thus, by Corollary~\ref{c: oddgln}, this is bounded above by $q^{\frac{3m-2}{2}}$.

Now write $N=\underbrace{\GL_m(q)\times\dots\times \GL_m(q)}_t$. Then Lemma \ref{L: oddnormal} implies that 
$$|H:C_H(g)|=|H\cap N: C_{H\cap N}(g)||H/N: C_{H/N}(gN)|.$$
Thus, writing $s$ as the product of $k$ transpositions in its action on the $m$-space decomposition, we have
\begin{eqnarray*}
|H\cap N: C_{H\cap N}(g)|_{p'}&<&(q^{m-\frac12}+\dots+q+1)^{t-2k}\times (q^{\frac{3m-2}{2}})^k \\
&<& \frac{(q^m-1)^t}{\sqrt{q}-1} \times \frac{1}{(\sqrt{q}-1)^{t-1}}.
\end{eqnarray*}
If $t\geq 2$ then, referring to Lemma \ref{l: oddsn}, it is sufficient to prove that $3.25^t<(\sqrt{q}-1)^{t-1}$ which is true, since $\sqrt{q}\geq 7$. If $t=2$ the result follows from the fact that
\[
 |H:C_H(g)|_{p'}=|H\cap N: C_{H\cap N}(g)|_{p'}.
\]
\end{proof}

\begin{lemma}\label{L: reduced bound}
Let $H<GL_n(q)$ with $|H|$ odd. Suppose that $g$ is an involution in $GL_n(q)$ which normalizes $H$. Then,
$$|H:C_H(g)|_{p'}\leq q^{n-1}+\dots+q+1.$$
\end{lemma}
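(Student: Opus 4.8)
The statement I must prove is Lemma~\ref{L: reduced bound}: for an odd-order subgroup $H<GL_n(q)$ normalized by an involution $g\in GL_n(q)$, we have $|H:C_H(g)|_{p',\heartsuit}<q^{n-1}+\dots+q+1$. The plan is to run an induction on $n$ using the subgroup-structure machinery (Corollary~\ref{c: breakdown} together with Lemma~\ref{l: relevant groups}), exactly mirroring the structure of the proof of Lemma~\ref{L: second gap}, except that here $g$ is an \emph{inner} involution rather than a field automorphism. Since $H$ has odd order, Case~1 (containing $SL_n(q)$) and Case~3 (almost quasi-simple, by Feit--Thompson) are both impossible, so I am reduced to Cases~2, 4, and~5 of Lemma~\ref{l: relevant groups}, plus the small base cases. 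The basic idea in each case is to pass to the action of $g$ on the relevant geometric decomposition, split $H$ via a normal subgroup using Lemma~\ref{L: oddnormal}, and feed the pieces into the inductive hypothesis.

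\textbf{Base cases and the easy geometric cases.} For $n=1$ the group $GL_1(q)$ is abelian so $C_H(g)=H$ and the ratio is $1<1$ fails trivially---more precisely the index is $1$, which is at most $q^0+\cdots=1$; I would check the degenerate bound carefully here. For $n=2$ I would argue as in Lemma~\ref{l: gl2}: the quantity $|H:C_H(g)|_{p',\heartsuit}$ is controlled by the maximal subgroups of $PSL_2(q)$ listed in Lemma~\ref{l: psl2}, giving a bound of $q+1$. For Case~4 ($n=4$, $H\leq GO_4^+(q)$) I would use $\Omega_4^+(q)\cong SL_2(q)\circ SL_2(q)$ and Lemma~\ref{l: psl2} to bound $|H\cap\Omega_4^+(q)|_{p',\heartsuit}<(q+1)^2<q^3+q^2+q+1$. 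For Case~5, a subfield subgroup $\curly_5$, it suffices to prove the result over the subfield $q_0$, so that case reduces to a smaller parameter.

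\textbf{The inductive Case~2 (imprimitive type) and the role of the involution's cycle type.} The substantive work is Case~2, where $H\leq GL_m(q)\wr S_t$ with $n=mt$, $t\geq 2$. As in Lemma~\ref{L: second gap} I may assume $H$ acts primitively on the $m$-space decomposition (otherwise refine to a coarser wreath product), and I write $g$ for the involution acting on the $t$ blocks. Setting $N=GL_m(q)^{\times t}$, Lemma~\ref{L: oddnormal} gives $|H:C_H(g)|=|H\cap N:C_{H\cap N}(g)|\cdot|H/N:C_{H/N}(gN)|$. For blocks fixed by $g$ the induction bounds the local contribution by $q^{m-1}+\cdots+1$; for blocks swapped in a transposition the contribution is at most the order of an odd-order subgroup of $GL_m(q)$, bounded by Corollary~\ref{c: oddgln} as $q^{(3m-2)/2}<q^{2m-1}$. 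Writing $g$ as a product of $k$ transpositions on the blocks and combining, I get $|H\cap N:C_{H\cap N}(g)|_{p',\heartsuit}<(q^{m-1}+\cdots+1)^{t-2k}(q^{2m-1})^k$, and the symmetric-group factor is handled by Lemma~\ref{l: oddsn}. The cases $\curly_1$ (parabolic), $\curly_3$ (field-extension), and $\curly_4$ (tensor) follow the same pattern using Lemma~\ref{L: oddnormal} and induction, just as in the proof of Lemma~\ref{L: second gap}.

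\textbf{Main obstacle.} The hardest part will be the careful bookkeeping in Case~2 showing that the product over fixed blocks, swapped blocks, and the $S_t$-quotient telescopes to land below $q^{n-1}+\cdots+q+1$ rather than merely below a power of $q$ of the right order of magnitude; the constants matter because the target is a precise polynomial in $q$, not just $q^{n-1}$. In particular I expect the delicate inequality to be the analogue of the final step in Lemma~\ref{L: second gap}, where one must absorb the factor $t^{\log_2 t}$ from Lemma~\ref{l: oddsn} into a product of geometric-series factors; since $g$ is now inner (so the summands are $q^{m-1}+\cdots+1$ rather than the half-integer-exponent series appearing in Lemma~\ref{L: second gap}), the arithmetic is cleaner, but I would still verify that the $k$ transposition-contributions of size $q^{2m-1}$ do not overwhelm the target bound, using that each swapped pair "uses up" two block-slots worth of the polynomial.
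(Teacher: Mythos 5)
Your overall architecture (induction on $n$ through the Aschbacher-class analysis of Lemma \ref{l: relevant groups}, splitting via Lemma \ref{L: oddnormal}, handling swapped blocks by the odd-order bound of Corollary \ref{c: oddgln}) is the paper's architecture, and your $\curly_2$ bookkeeping is essentially sound, but there are two concrete gaps. First, you have misread Case 5 of Lemma \ref{l: relevant groups}: it is not the subfield family $\curly_5$ (that family sits inside Case 2 of that lemma and does reduce to the subfield $q_0$), but the tensor-square configuration $H\leq (GL_m(q)\circ GL_m(q)).2$ with $n=m^2$. Consequently you never treat the situation where $g$ swaps the two central factors; there $|N:C_N(g)|=|GL_m(q)|$ for $N=GL_m(q)\circ GL_m(q)$, and one bounds the index by the order of an odd-order subgroup of $GL_m(q)$, i.e.\ by $q^{\frac{3m-2}{2}}$ --- exactly the tool you deploy for swapped blocks in $\curly_2$, but never apply here. (The paper disposes of Cases 4 and 5 in one line by citing the already-proved cases of Lemma A applied to $\langle H,g\rangle$: since $H$ has odd order, all involutions of $\langle H,g\rangle$ are $H$-conjugate, so the bound for ``some involution'' transfers to the given $g$. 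Note also that the classification must be applied to $\langle H,g\rangle$ rather than to $H$ alone, as the paper does explicitly; otherwise nothing guarantees that $g$ preserves the decomposition defining $M$.)

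Second, and more seriously, your $\curly_3$ case fails as sketched. If $H\leq GL_{n/r}(q^r).r$ with $r$ prime and $g$ does not lie in $GL_{n/r}(q^r)$, then necessarily $r=2$ and $g$ acts on $GL_{n/2}(q^2)$ as an \emph{involutory field automorphism}, so the inductive hypothesis --- which concerns involutions inside the general linear group --- simply does not apply. This is precisely why the paper proves Lemma \ref{L: second gap} first and uses it here as an ingredient, not merely as a structural template: its bound $q^{n-\frac12}+q^{n-1}+\dots+q+q^{\frac12}+1$, with half-integer exponents, is engineered so that the substitution $n\mapsto n/2$, $q\mapsto q^2$ yields exactly $q^{n-1}+q^{n-2}+\dots+q+1$, the target of the present lemma. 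You cite Lemma \ref{L: second gap} only as a pattern to mirror, so your induction has no way to close in this subcase; any repair must either invoke that lemma or reprove its field-automorphism estimate.
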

\begin{proof}
We proceed by induction on $n$. When $n=1$ the result is trivial and, when $n=2$, Lemma \ref{l: gl2} gives the result.

Now suppose that $n>2$ and refer to Lemma \ref{l: relevant groups} for the group $\langle H,g\rangle$. Clearly Cases 1 and 3 are not relevant here. If Case 4 or Case 5 holds then our result is implied by the strong version of Lemma A which we have already proved in these cases. In Case 6 the result is immediate. Thus we are left with Case 2: $\langle H,g\rangle \leq M\in \curly_i(GL_n(q))$ for $i=1,\dots, 5$.

If $M\in \curly_5(GL_n(q))$ then we can assume that $\langle H,g\rangle \leq GL_n(q_0)$ where $q=q_0^a$. Then it is sufficient to prove the result over the base case. Thus we assume that $M\not\in\curly_5(GL_n(q))$. 

If $\langle H,g\rangle \leq M\in\curly_1(\GL_n(q))$ then $\langle H,g\rangle \leq Q:(\GL_m(q)\times \GL_{n-m}(q))$ where $Q=O_p(M)$ and $m>1$. Set $L=M/Q$ and observe that
$$|H:C_H(g)|_{p'} = |H_L: C_{H_L}(g_L)|_{p'}$$
where $H_L=HQ/Q\leq L$ and $g_L=gQ\in H_L$. Now suppose, without loss of generality, that $m\leq n-m$, and set $N=H_L\cap \GL_m(q)$. 

Lemma \ref{L: oddnormal} implies that
$$|H_L: C_{H_L}(g_L)| = |N:C_N(g)| \times |H/N:C_{H/N}(g_LN)|.$$
Now $H/N$ is isomorphic to a subgroup of $\GL_{n-m}(q)$ and induction implies that 
$$|H/N:C_{H/N}(g_LN)|_{p'} \leq q^{n-m-1}+\dots+q+1.$$
Then $g_L$ normalizes $N$ and $g_L$ either centralizes $N$ or else $\langle g_L, N\rangle$ is isomorphic to a subgroup of $\GL_m(q)$. Either way induction implies that
$$|H_L:C_{H_L}(g_L)|_{p'} \leq q^{m-1}+\dots+q+1,$$
as required.

If $\langle H,g\rangle \leq M\in\curly_3(\GL_n(q))$ then $\langle H,g\rangle\leq M=\GL_m(q^r).r$ where $n=mr$ and $r$ is prime. Let $N=\GL_m(q^r)$ be normal in $M$ and split into two cases. Suppose first that $|\langle H,g\rangle \cap N|$ is even. Then induction implies that
$$|H:C_H(g)|_{p'}\leq ((q^r)^{m-1}+\dots+q+1)r< q^{mr-1}+\dots+q+1$$
as required. Suppose on the other hand that $|H\cap N|$ is odd. Then $r=2$ and Lemma \ref{L: second gap} gives the result.

If $\langle H,g\rangle \leq M\in\curly_4(\GL_n(q))$ then $M\cong \GL_{n_1}(q)\circ GL_{n_2}(q)$ where $n=n_1n_2$ and $1<n_1<n_2$. Proceed similarly to the case $\curly_1$, observing that $M$ has normal subgroups isomorphic to both $\GL_{n_1}(q)$ and $\GL_{n_2}(q)$; thus we define $N=H\cap \GL_{n_1}(q)$. Suppose first that $N$ has even order and take $g$ to be an involution in $N$. Then Lemma \ref{L: sylowtwos} implies that, if $P$ is a Sylow $2$-subgroup of $N$< then
$$|H:C_H(g)| = |N:C_N(g)|\frac{|g^H\cap P|}{|g^N\cap P|}=|N:C_N(g)|.$$
By induction 
$$|N:C_N(g)|_{p'}\leq q^{n_1-1}+\cdots+q+1$$
and we are done. Suppose, on the other hand, that $N$ has odd order. In particular this means that $H\cap Z(M)$ has order $z$, an odd number. Now Lemma \ref{L: oddnormal} implies that, for $g$ an involution in $H$,
$$|H: C_{H}(g)| = |N:C_N(g)| \times |H/N:C_{H/N}(gN)|.$$
Observe that $H/N$ is a subgroup of $\PGL_{n_2}(q)$. Let $H_X$ be the subgroup of $\GL_{n_1}(q)\times \GL_{n_2}(q)$ such that $H_X\cap Z(GL_{n_1}(q)\times \GL_{n_2}(q))\cong z\times z$, and such that $\pi(H_X)=H$ where $\pi$ is the natural map,
$$\pi:\GL_{n_1}(q)\times \GL_{n_2}(q)\to \GL_{n_1}(q)\circ \GL_{n_2}(q).$$
Let $g_X\in H_X$ be the unique involution for which $\pi(g_X)=g$. Let $g_2$ (resp. $H_2$) be the image of $g_X$ (resp. $H_X$) under the natural projection $\GL_{n_1}(q)\times \GL_{n_2}(q)\to GL_{n_2}(q)$. Observe that $H_2/(H_2\cap Z(\GL_{n_2}(q))\cong H/N$. By induction
\[
 \begin{aligned}
|H_2: C_{H_2}(g_2)|_{p'} & \leq q^{n_2-1}+\dots+q+1, \\
  |N:C_N(g)|_{p'} & \leq q^{n_1-1}+\cdots+q+1
 \end{aligned}
\]
and the result follows.

Finally if $M\in \curly_2(GL_n(q))$ then $\langle H, g\rangle <GL_m(q)\wr S_t$ where $n=mt$. Set $N=\underbrace{GL_m(q)\times\dots\times GL_m(q)}_t$. We assume that $N/H$ acts primitively on the $t$ copies of $GL_m(q)$ as otherwise $H$ lies in a maximal subgroup $M_1=GL_{m_1}(q)\wr S_{t_1}$, $M_1\in\curly_2(GL_n(q))$ with $t_1<t$ such that $H$ acts primitively on this decomposition.

If $g\in N$ then induction implies that $|H\cap N:C_{H\cap N}(g)|_{p'}<(\frac{q^m-1}{q-1})^t$. Lemma \ref{l: oddsn} implies that $|H/N|<\frac{1}{\sqrt{24}}t^{3.25}$ which is sufficient unless $t=2$, when the result is immediate.

If $g\in H\backslash N$, then we proceed very similarly to the proof of Lemma \ref{L: second gap}. First consider $C_{S}(g)$ where $S$ is the projection of $H$ onto a particular $GL_m(q)$ that is fixed by $g$. By induction $|S: C_{S}(g)|_{p'}<q^{m-1}+\dots+q+1$. Alternatively if $GL_m(q)\times GL_m(q)$ are swapped by $g$, and $S$ is the projection of $H$ onto $GL_m(q)\times GL_m(q)$ then it is clear that $|S: C_{S}(g)|_{p'}$ is at most the size of an odd-order subgroup in $GL_m(q)$. 

If $m>1$, then, by Corollary \ref{c: oddgln}, this is bounded above by $q^{\frac{3m-2}{2}}$. Thus, for $m>1$, $|H\cap N:C_{H\cap N}(g)|_{p',}<(q^{m-1}+\dots+q+1)^t$. As before, Lemma \ref{l: oddsn} yields the result.

If $g\in H\backslash N$ and $m=1$ then it is clear that $|C_{H\cap N}(g)|>(q-1)^{\lceil\frac{n}2\rceil}$ and so $|H\cap N: C_{H\cap N}(g)|_{p'}\leq (\frac{q-1}2)^{\frac{n}2}$. It is therefore sufficient to prove that $|H/N|_{p'}<(2(q-1))^{\frac{n}2-1}$. Again Lemma~\ref{l: oddsn} implies that $|H/N|\leq \frac{1}{\sqrt{24}}n^{3.25}$ and the result follows for $n\geq 6$. If $n=5$ (resp. $n\leq 4$), then $|H/N|_{p'}\leq 5$ (resp. $|H/N|_{p'}\leq 3$) and the result follows immediately.
\end{proof}

\begin{lemma}\label{L: klein}
Let $H<GL_n(q)$ with $|H|$ odd. Let $\sigma$ be a field automorphism of $GL_n(q)$ satisfying $\sigma^2=1$. Suppose that $K$ is a 2-group in $\langle GL_n(q), \sigma\rangle$ which normalizes $H$. Then
$$|H:C_H(g)|_{p'}\cdot|H:C_H(h)|_{p'}<q^{\frac{3n}2}c_n$$
for distinct involutions $g,h\in K$.
\end{lemma}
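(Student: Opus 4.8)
The plan is to induct on $n$, running the same case analysis over the Aschbacher families that drives Lemmas \ref{l: oddgln}, \ref{L: second gap} and \ref{L: reduced bound}, but now carrying the two involutions $g$ and $h$ through the argument simultaneously. The first observation is that $K$ normalises $H$ and $|H|$ is odd, so $\langle H, K\rangle = H\rtimes K$ is soluble; in particular $\langle H,g,h\rangle$ can neither contain $SL_n(q)$ nor be almost quasi-simple, so when we feed it into Lemma \ref{l: relevant groups} (or, if $g$ or $h$ is a field-automorphism-type involution lying in $\langle GL_n(q),\sigma\rangle\setminus GL_n(q)$, into the $\langle GL_n(q),\sigma\rangle$-version of Corollary \ref{c: breakdown} used in Lemma \ref{L: second gap}) only the geometric families $\curly_1,\dots,\curly_5$, the orthogonal case $n=4$, and the $\curly_7$-type case $n=m^2$ survive. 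For the base of the induction one simply multiplies the single-involution estimates: Lemma \ref{L: reduced bound} gives $|H:C_H(g)|_{p',\heartsuit}<q^{n-1}+\dots+1$ when $g$ is inner and Lemma \ref{L: second gap} gives the analogous $q^{n-1/2}+\dots+1$ when $g$ is outer, and the product of two such bounds lies below $q^{3n/2}c_n$ (Lemma \ref{l: cn}) precisely in the range of small $n$ where $2n-2\le\tfrac32 n$ (respectively the smaller ranges forced by outer involutions).

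For larger $n$ I would treat the families in increasing order of difficulty. The subfield family $\curly_5$ reduces the claim to the base field $q_0$, and so may be discarded. The parabolic family $\curly_1$ (pass to $H/O_p(H)\le GL_m(q)\times GL_{n-m}(q)$) and the tensor families $\curly_4$ and the $\curly_7$-type $GL_m(q)\circ GL_m(q)$ all reduce to a product of two smaller general linear groups; here I would split each of the two indices with Lemma \ref{L: oddnormal}, apply the inductive product bound in each factor, and reassemble using the submultiplicativity of $c_n$ recorded in Lemma \ref{l: cn}, namely $c_n\geq c_r c_s$ for $n=r+s$ and $c_n\geq (c_m)^2$ for $n=m^2$. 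The field-extension family $\curly_3$ ($GL_m(q^r).r$) is handled by pushing both involutions into $GL_m(q^r)$ (an involution outside this subgroup forces $r=2$ and returns us to the field-automorphism situation of Lemma \ref{L: second gap}), and the $n=4$ orthogonal case is dispatched directly from $\Omega_4^+(q)\cong SL_2(q)\circ SL_2(q)$ and Lemma \ref{l: psl2}, exactly as in the earlier lemmas.

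The genuinely hard case, and the one I expect to be the main obstacle, is the imprimitive family $\curly_2$, $M=GL_m(q)\wr S_t$ with $n=mt$. As in Lemmas \ref{l: oddgln} and \ref{L: second gap} I would first reduce to the case where $H$ acts primitively on the $t$ blocks, so that Lemma \ref{l: oddsn} bounds the permutation part by $t^{\log_2 t}$, and then split each index with Lemma \ref{L: oddnormal} into a block-diagonal contribution from $\tilde N=H\cap(GL_m(q)^t)$ and a permutation contribution. The block-diagonal part must be estimated orbit by orbit: on a block fixed by both $g$ and $h$ one invokes the inductive product bound $q^{3m/2}c_m$, while on a pair of blocks interchanged by one of the involutions one uses the much stronger estimate of Corollary \ref{c: oddgln}, namely $q^{(3m-2)/2}$ for the odd-order projection into a single $GL_m(q)$. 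The delicate configurations are those in which one involution interchanges a pair of blocks that the other fixes: here naive per-block budgeting against the allowance $q^{3m/2}$ overshoots, and the argument must exploit two features at once — the slack in the swapped-pair estimate (Corollary \ref{c: oddgln} lies well below the $q^{3m}$ one would allot to two blocks) and the fact that the interchanging involution conjugates one block's projection onto the other, so that the two involutions' within-block contributions are genuinely correlated rather than independent. Reassembling these orbit contributions is exactly what the definition $c_n=\max_\lambda c_\lambda$ is engineered for: the worst block-configurations correspond to partitions $\lambda$ whose weight $c_\lambda$ is correspondingly large, so that the apparent overshoot is absorbed into $c_n$ via the second part of Lemma \ref{l: cn}. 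Making this bookkeeping precise across all orbit-types of the pair $(g,h)$ — and checking that the extra half-power in the exponent $\tfrac32 n$ together with $c_n$ always suffices — is where essentially all the work of the lemma resides.
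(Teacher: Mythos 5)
Your overall architecture---induction over the Aschbacher families, carrying the pair $(g,h)$ through simultaneously, with the wreath-product family as the hard case---is the same as the paper's, but there are two genuine gaps. The first is your pruning step. Lemma \ref{l: relevant groups} carries the hypothesis $|H|_{p',\heartsuit}\geq q^{n-1}+\dots+q+1$, and when that hypothesis fails you cannot simply discard the case: the trivial estimate for the product of the two indices is then roughly $3\,q^{2n-2}$, which exceeds the target $q^{\frac{3n}2}c_n$ for all $n\geq 4$ (only $c_n\geq 1$ is available; the upper bound on $c_n$ cannot help). Consequently your list of surviving cases wrongly omits the family $\curly_6$ (normalizers of symplectic-type $r$-groups, which a soluble $\langle H,K\rangle$ can perfectly well inhabit) and $\curly_7$ with $t\geq 3$. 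The paper works from Corollary \ref{c: breakdown} rather than Lemma \ref{l: relevant groups} and proves sharper, dedicated order bounds precisely so that squaring is affordable: $|H/(H\cap Z)|_{\heartsuit}<\frac13\,7^{\frac{3n}4}$ in the $\curly_6$ case (with special attention to $n=7$), and $|H|_{p',\heartsuit}<q^{\frac{3n}4}$ in the $\curly_7$ case by applying Corollary \ref{c: oddgln} factor-wise (with $(m,t)=(3,2)$ handled separately). Without estimates of this strength, neither family can be closed out.

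Second, in $\curly_2$ you correctly isolate the delicate configuration---one involution swapping a pair of blocks that the other fixes---but you leave it open and propose a ``correlation'' analysis that the paper shows is unnecessary. The missing idea is to decompose the $t$ blocks into orbits of $K_1=\langle g,h\rangle$, a $2$-group generated by two involutions: on an orbit of length $t_1>1$ one may take $g$ to act as a product of $\frac{t_1}2$ transpositions (fixed-point-freely) and $h$ as a product of $\frac{t_1}2$ or $\frac{t_1}2-1$ transpositions, so $h$ fixes at most two blocks of the orbit. Lemma \ref{l: oddgln} then gives the order-type bound $(q^mc_m)^{\frac{t_1}2}$ for $g$'s index and $(q^mc_m)^{\frac{t_1}2-1}q^{2m}$ for $h$'s (Lemmas \ref{L: reduced bound} and \ref{L: second gap} paying at most $q^m$ per $h$-fixed block), and the per-orbit product $q^{m(t_1+1)}(c_m)^{t_1-1}$ fits under the per-orbit budget $q^{\frac{3mt_1}2}(c_m)^{t_1}$ since $t_1\geq 2$; orbits of length $1$ take the inductive two-involution bound $q^{\frac{3m}2}c_m$, and Lemma \ref{l: cn} reassembles everything against the $t^{\log_2 t}$ permutation part. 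So the overshoot is absorbed not by the partition calculus of $c_n$, as you suggest, but by the slack in the order bounds on the blocks that $g$ moves. A smaller omission of the same kind occurs in $\curly_3$ with $r$ odd: the inductive bound $(q^r)^{\frac{3(n/r)}2}c_{n/r}=q^{\frac{3n}2}c_{n/r}$ has no slack at all in the power of $q$, so the factor $|r|_{2'}$ coming from the quotient must be absorbed by proving $c_n\geq c_{n/r}|r|_{2'}$, which the paper does by appending parts to the extremal partition, with separate treatment of $\frac{n}r\in\{1,2\}$ and $(r,n)=(3,9)$; your sketch does not account for this.
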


Note that if $q$ is not square then $\langle GL_n(q),\sigma\rangle = GL_n(q)$. Note too that $c_n$ was defined on p.~\pageref{l: cn} immediately before Lemma \ref{l: cn}. 

\begin{proof}
If $n=1$, the result is immediate. If $n=2$, then we must show that
$$|H:C_H(g)|_{p'}\cdot |H:C_H(h)|_{p'}<\frac12q^3.$$
Lemma \ref{l: gl2} implies that there exists $g$ such that
$$|H:C_H(g)|_{p'}<q+1.$$
Now Lemma \ref{l: psl2} implies that $|H|_{p'}<\frac14(q-1)^2$ and the result follows.

Now take $n\geq 3$ and consider $\langle H, K\rangle$ as a subgroup of $\langle GL_n(q), \sigma\rangle$ and refer to Corollary \ref{c: breakdown}. We consider the five cases listed there; observe first that Case 4 is impossible. Consider Case 1 next: $\langle H, K\rangle$ contains $\Omega$. Then, since $\langle H,K\rangle$ is soluble this implies that $n=1$ which has been covered.

Now consider Case 5: we lie in the small rank setting. We have dealt with $n=2$ hence we are left with $n=4$ and $H<GO_4^+(q)\cong Z(SL_2(q)\circ SL_2(q)).4$; then $|H|_{p'}\leq \frac1{16}(q-1)(q+1)^2$ and we are done.

We move on to Case 3 which we expand to cover $H\leq M\in\curly_6(\Delta_\eta)$ for $\eta$ non-degenerate or zero. Now it is just a matter of checking that $(|M|_{p'})^2 < c_nq^n$ and the result follows immediately.

We are left with Case 2 or, more precisely, with the possibility that $\langle H, K\rangle\leq M_1\in\curly_i(\langle GL_n(q), \sigma \rangle)$ for some $i=1,\dots, 5, 7$. Then $H\leq M\in\curly_i(GL_n(q))$ for some $i=1,\dots, 5, 7$. Clearly if $M\in\curly_5(GL_n(q))$ then it is sufficient to prove the result for $H\cap M$ taking the place of $H$. 

If $M\in\curly_7(GL_n(q)$ then $M\cong (GL_m(q)\circ\cdots \circ GL_m(q)).S_t$ with $n=m^t$ and $m\geq 3$. We can assume, as usual, that the action of $S_t$ in the wreath product is primitive and now Lemma \ref{l: oddsn2} and Corollary \ref{c: oddgln} imply that 
$$|H|_{p'}< (q^{\frac{3m-2}2})^t2^{t-1}.$$
This yields the result immediately.

We are left with the possibility that $H\leq M\in\curly_i(GL_n(q))$ for some $i=1,\dots,4$ and we proceed by induction. If $M\in\curly_1(GL_n(q)$ or $M\in\curly_4(GL_n(q)$, then the result is immediate using induction and Lemma~\ref{L: oddnormal}, since in this case $M/O_p(M)$ is isomorphic to $GL_m(q)\times GL_{n-m}(q)$ or $GL_m(q)\circ GL_{\frac{n}m}(q)$. 

If $M\in\curly_3(GL_n(q)$, then we have $M\cong GL_{\frac{n}r}(q^r).r$ where $r$ is prime. If $r=2$ then no involutory field automorphism of $GL_n(q)$ normalizes $M$ hence we $\langle H,K\rangle$ is isomorphic to a subgroup of $GL_{\frac{n}r}(q^r).2$ and the result follows by induction. 

Suppose, then, that $r$ is odd and set $m:=\frac{n}{r}$ and $H_1:=H\cap GL_m(q^r)$. Observe that $K\leq H_1$. Consider the case when $m=1$. Then $M$ does not contain a Klein 4-group, so we have a contradiction. If $m=2$, then $M\cong \GL_2(q^r).r$ Note that any Klein 4-subgroup of $\GL_2(q^r)$ intersects $Z(\GL_2(q^r)$ non-trivially. Thus Lemma~\ref{L: reduced bound} implies that
\[|H:C_H(g)|_{p'}\cdot |H:C_H(h)|_{p'} \leq r^2(q^{n-1}+\cdots +q+1)\]
and the result follows.

If $m\geq 2$, then we apply induction to conclude that
\[|H_1:C_{H_1}(g)|_{p'}\cdot |H_1:C_{H_1}(h)|_{p'}<q^{\frac{3n}2}c_{m}\]
and so 
\[|H:C_H(g)|_{p'}\cdot |H:C_H(h)|_{p'}< q^{\frac{3n}2}c_{m}r^2.\]
Now one can check, using the formulae given at Lemma~\ref{l: cn} (3), that $c_n>c_mr^2$ and the result follows.

Finally assume that $H\leq M\in\curly_2(GL_n(q))$. Then $M\cong GL_m(q)\wr S_t$. Write $N$ for the normal subgroup of $M$ isomorphic to $\underbrace{GL_m(q)\times \cdots \times GL_m(q)}_t$ and let $H_1 = \langle H,K\rangle \cap N$; we may assume as usual that $\langle H,K\rangle/H_1$ acts primitively in the natural action on $t$ copies of $GL_m(q)$. 

Consider the action of $K=\langle g, h\rangle$ on the $t$ copies of $GL_m(q)$; suppose that $K$ has an orbit of length $t_1>1$ (note that $t_1$ is even). Then $K$ normalizes $H_2 = H\cap \underbrace{(GL_m(q)\times\cdots\times GL_m(q))}_{t_1}$ and we consider the induced action of $K_1$ on a set of size $t_1$; we can assume that $g$ is a product of $\frac{t_1}2$ transpositions, while $h$ is a product of either $\frac{t_1}2-1$ or $\frac{t_1}2$ transpositions. Lemma \ref{l: oddgln} implies that 
$$|H_2:C_{H_2}(g)|_{p'} \leq (q^mc_m)^{\frac{t_1}2};$$
similarly Lemmas \ref{l: oddgln}, \ref{L: second gap} and \ref{L: reduced bound} imply that
$$|H_2:C_{H_2}(h)|_{p'} \leq (q^mc_m)^{\frac{t_1}2-1}q^{2m}$$
which implies, in particular, that
\[|H_2:C_{H_2}(g)|_{p'}\cdot |H_2:C_{H_2}(h)|_{p'} \leq (q^{\frac{3m}{2}})^{t_1}c_m^{t_1}.\]
If $K_1$ has an orbit of length $1$ then $K_1$ normalizes $H_3 = H\cap GL_m(q)$, for a particular copy of $GL_m(q)$. We apply induction to conclude that
$$|H_3:C_{H_3}(g)|_{p'}\cdot |H_3:C_{H_3}(g)|_{p'} \leq q^{\frac{3m}2}c_m.$$
Putting these results together we conclude that
$$|H_1:C_{H_1}(g)|_{p'}\cdot |H_1:C_{H_1}(g)|\big|_{p'} \leq q^{\frac{3tm}2}(c_m)^t.$$
Now $|H/(H\cap N)|<2^{t-1}$ and so, by Lemma \ref{l: cn},
$$|H:C_H(g)|_{p'}\cdot|H:C_H(g)|\big|_{p'} \leq q^{\frac{3tm}2}(c_m)^t2^{t-1}<q^{\frac{3n}2}c_n$$
as required.
\end{proof}

\subsection{Case 2}

We are now ready to prove Lemma A  for $H$ satisfying Case 2 of Lemma \ref{l: relevant groups}, i.e. for $H\leq M\in\curly_i(\GL_n(q))$, for $i=1,\dots, 5$. (See p.\pageref{la} for the statement of Lemma A.) If $M\in\curly_5(\GL_n(q))$ then $M=\GL_n(q_0)Z(\GL_n(q))$ and it is enough to prove the result for $H\cap \GL_n(q_0)\leq \GL_n(q_0)$. We exclude this case and, for the rest, suppose that Lemma A holds for $m<n$, i.e. we proceed under the following hypothesis:
\begin{ihypothesis}
Let $H_1$ be an even order subgroup of $\GL_m(q)$ with $m<n$. Then $H_1$ contains an involution $g$ such that
$|H_1:C_{H_1}(g)|_{p'}\leq q^{m-1}+\dots+q+1$.
\end{ihypothesis}

\begin{lemma}
If $H$ has even order and $H\leq M\in\curly_i(\GL_n(q))$ with $i=1,3$, or $4$, then $H$ contains an involution $g$ such that
\[|H:C_H(g)|_{p'}\leq q^{n-1}+\dots+q+1.\]
\end{lemma}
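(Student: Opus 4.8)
The plan is to handle the three maximal-subgroup families $\curly_1$, $\curly_3$, $\curly_4$ by reducing each to the inductive hypothesis, exactly as was done for the odd-order analogues in Lemmas \ref{L: second gap} and \ref{L: reduced bound}. In each case the group $M$ has a normal structure that projects onto a product of smaller general linear groups, and we exploit Lemma \ref{L: oddnormal} (or its iterate, Lemma \ref{L: invcentralizer}) to split the index $|H:C_H(g)|$ into factors coming from the smaller factors, to which the Inductive Hypothesis applies. The first task in every case is to produce a suitable involution $g\in H$; the natural choice is an involution that projects nontrivially into one of the smaller $GL_m(q)$ factors, and then to bound its centralizer index componentwise.

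First I would treat $\curly_1$. Here $M=[q^{m(m-n)}]:(GL_m(q)\times GL_{n-m}(q))$; let $O_p(M)$ be the unipotent radical and set $L=H/(H\cap O_p(M))$, so that passing to the quotient does not change $|H:C_H(g)|_{p',\heartsuit}$ (the radical is a $p$-group, and a suitable involutory preimage exists since $O_p(M)$ has odd index-coprime behaviour). Now $L\leq GL_m(q)\times GL_{n-m}(q)$, so by the Inductive Hypothesis there is an involution $g$ whose image projects to an involution in one factor, and applying Lemma \ref{L: oddnormal} across the direct product gives
\[
|H:C_H(g)|_{p',\heartsuit}\leq (q^{m-1}+\dots+1)(q^{n-m-1}+\dots+1)<q^{n-1}+\dots+q+1,
\]
where the final inequality is the standard fact that the product of two such ``cyclotomic-type'' sums is dominated by the single larger one. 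The case $\curly_4$, with $M\cong GL_{n_1}(q)\circ GL_{n_2}(q)$ and $n=n_1n_2$, is handled identically, again producing $g$ from a component and bounding $|H:C_H(g)|_{p',\heartsuit}$ by a product of two shorter sums whose product lies below $q^{n-1}+\dots+1$.

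The case $\curly_3$, where $M=GL_{n/r}(q^r).r$ with $r$ prime, is the one I expect to be the main obstacle, because the behaviour of an involution splits according to whether it lies inside the base group $GL_{n/r}(q^r)$ or in the outer $.r$ part. If $H$ contains an involution $g\in GL_{n/r}(q^r)$, then by induction applied over the larger field $q^r$ we get $|H:C_H(g)|_{p',\heartsuit}<((q^r)^{(n/r)-1}+\dots+1)\cdot r$, and one must check that this product of the field-automorphism factor $r$ with the shorter geometric sum is still at most $q^{n-1}+\dots+q+1$ — the content being that $r_{2',\heartsuit}$ is small relative to the gap between $\frac{(q^r)^{n/r}-1}{q^r-1}$ and $\frac{q^n-1}{q-1}$. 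If instead every involution of $H$ lies outside $GL_{n/r}(q^r)$, then necessarily $r=2$ and $g$ acts as an involutory field automorphism, so Lemma \ref{L: second gap} applies directly and yields the bound $q^{n-\frac12}+\dots+q^{\frac12}+1<q^{n-1}+\dots+q+1$. Thus the real work is the careful casework on where the chosen involution sits, together with the elementary but fiddly polynomial inequalities comparing the resulting products against $q^{n-1}+\dots+q+1$.
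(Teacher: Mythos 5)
Your reduction strategy is the right one, and your treatment of $\curly_3$ is essentially the paper's: the split according to whether $H\cap GL_{n/r}(q^r)$ has even or odd order is exactly what the paper does, and in the odd case (where $r=2$ is forced) the paper closes by viewing $g$ as an honest involution of the ambient $GL_n(q)$ normalizing the odd-order group $H\cap GL_{n/2}(q^2)$ and quoting Lemma \ref{L: reduced bound}; your appeal to Lemma \ref{L: second gap} over $\mathbb{F}_{q^2}$ works equally well. The genuine gap is in your $\curly_1$ and $\curly_4$ cases: you ``apply Lemma \ref{L: oddnormal} across the direct product'', but that lemma requires the kernel of the projection --- here $N=H_L\cap GL_m(q)$, resp.\ $N=H\cap GL_{n_1}(q)$ --- to have \emph{odd} order. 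When $N$ has even order the index does not factor as $|N:C_N(g)|\,|H/N:C_{H/N}(gN)|$, because $H$ may fuse distinct $N$-classes of involutions; this is precisely where the paper's Sylow machinery enters. In that case one takes the involution inside $N$, applies Lemma \ref{L: sylowtwos} to get $|H:C_H(g)|=|N:C_N(g)|\cdot\frac{|g^H\cap P|}{|g^N\cap P|}$ with $P$ a Sylow $2$-subgroup of $N$, and bounds the fusion ratio by $2^{m+\frac{m}{2}-1}<q^{m-1}+\dots+q+1$ via Lemma \ref{l: conj in sylow}; the normalization $m\leq n-m$ (resp.\ $n_1<n_2$) then makes $(q^{m-1}+\dots+q+1)^2\leq q^{n-1}+\dots+q+1$ and closes the case. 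Conversely, when $N$ has odd order your product bound is misattributed: the Inductive Hypothesis says nothing about $N$ (an odd-order group contains no involutions), and the bound $|N:C_N(g)|_{p',\heartsuit}\leq q^{m-1}+\dots+q+1$ must come from Lemma \ref{L: reduced bound}, applied to the chosen involution normalizing $N$.

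A second omission sits in $\curly_4$, which you claim is ``handled identically'': since $M=GL_{n_1}(q)\circ GL_{n_2}(q)$ is a central product, the quotient $H/N$ embeds in $PGL_{n_2}(q)$, not $GL_{n_2}(q)$, so the Inductive Hypothesis does not apply to it directly, and an involution of $H/N$ need not lift to an involution of $H$. The paper repairs this by passing to a cover $H_X\leq GL_{n_1}(q)\times GL_{n_2}(q)$ with $\pi(H_X)=H$, applying induction to the projection $H_2\leq GL_{n_2}(q)$, and choosing an \emph{involutory} preimage $g_X$ of the resulting involution $g_2$, so that $g=\pi(g_X)\in H$ is an involution with $|H/N:C_{H/N}(gN)|$ controlled by the inductive bound. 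So, contrary to your expectation, $\curly_3$ is the easy case; the real work lies in the even-order fusion situation and in the central-product lift, neither of which your sketch supplies.
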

\begin{proof}
If $H\leq M\in\curly_1(\GL_n(q))$ then $H\leq Q:(\GL_m(q)\times \GL_{n-m}(q))$ where $Q=O_p(M)$ and $m>1$. Set $L=M/Q$ and observe that, for any $g\in H$,
$$|H:C_H(g)|_{p'} = |H_L: C_{H_L}(g_L)|_{p'}$$
where $H_L$ is a subgroup of $L$ and $g_L$ is an involution in $H_L$. Now suppose, without loss of generality, that $m\leq n-m$, and set $N=H_L\cap \GL_m(q)$. 

Suppose that $N$ has odd order. Lemma \ref{L: oddnormal} implies that
$$|H_L: C_{H_L}(g_L)| = |N:C_N(g)| \times |H_L/N:C_{H_L/N}(g_LN)|.$$
Now $H/N$ is isomorphic to a subgroup of $\GL_{n-m}(q)$ and induction implies that we can choose $g_L$ such that 
$$|H/N:C_{H/N}(g_LN)|_{p'} \leq q^{n-m-1}+\dots+q+1.$$
Then $g_L$ normalizes $N$ and $g_L$ either centralizes $N$ or else $\langle g_L, N\rangle$ is isomorphic to a subgroup of $\GL_m(q)$. Either way, since $N$ has odd order, $\langle g_L, N\rangle$ has at most one conjugacy class of involutions, and induction implies that
$$|H_L:C_{H_L}(g_L)|_{p'} \leq q^{m-1}+\dots+q+1,$$
as required. If, on the other hand, $N$ has even order then Lemma \ref{L: sylowtwos} implies that, for $g_L\in N$,
$$|H_L: C_{H_L}(g_L)|\leq |N:C_{N}(g_L)|\times \frac{|g_L^{H_L}\cap P|}{|g_L^N\cap P|}$$
where $P$ is a Sylow $2$-subgroup of $N$. By induction we can choose $g_L$ so that
$$|N:C_{N}(g_L)|_{p'} \leq q^{m-1}+\dots+q+1.$$
Furthermore Lemma \ref{l: conj in sylow} implies that 
$$\frac{|g_L^{H_L}\cap P|}{|g_L^N\cap P|}<3.04^m<q^{m-1}+\dots+q+1.$$
Since $m\leq n-m$ the result follows.

If $H\leq M\in\curly_3(\GL_n(q))$ then $H\leq M=\GL_m(q^r).r$ where $n=mr$ and $r$ is prime. Let $N=\GL_m(q^r)$ be normal in $M$ and split into two cases. Suppose first that $|H\cap N|$ is even. Then induction implies that $H\cap N$ contains an involution $g$ such that
$$|H:C_H(g)|_{p'}\leq ((q^r)^{m-1}+\dots+q+1)r< q^{mr-1}+\dots+q+1$$
as required. Suppose on the other hand that $|H\cap N|$ is odd. Then $r=2$ and Lemma \ref{L: second gap} gives the result.

If $H\leq M\in\curly_4(\GL_n(q))$ then $M\cong \GL_{n_1}(q)\circ GL_{n_2}(q)$ where $n=n_1n_2$ and $1<n_1<n_2$. Proceed similarly to the case $\curly_1$, observing that $M$ has normal subgroups isomorphic to both $\GL_{n_1}(q)$ and $\GL_{n_2}(q)$; thus we define $N=H\cap \GL_{n_1}(q)$. Suppose first that $N$ has even order and take $g$ to be an involution in $N$. Then Lemma \ref{L: sylowtwos} implies that
$$|H:C_H(g)| = |N:C_N(g)|\frac{|g^H\cap P|}{|g^N\cap P|}$$
where $P$ is a Sylow $2$-subgroup of $N$. By induction we can choose $g$ so that 
$$|N:C_N(g)|_{p'}\leq q^{n_1-1}+\cdots+q+1.$$
Furthermore Lemma \ref{l: conj in sylow} implies that 
$$\frac{|g^H\cap P|}{|g^N\cap P|}<3.04^{n_1}< q^{n_1-1}+\dots+q+1,$$
and the result follows. 

Suppose, on the other hand, that $N$ has odd order. In particular this means that $H\cap Z(M)$ has order $z$, an odd number. Now Lemma \ref{L: oddnormal} implies that, for $g$ an involution in $H$,
$$|H: C_{H}(g)| = |N:C_N(g)| \times |H/N:C_{H/N}(gN)|.$$
Observe that $H/N$ is a subgroup of $\PGL_{n_2}(q)$. Let $H_X$ be the subgroup of $\GL_{n_1}(q)\times \GL_{n_2}(q)$ such that $H_X\cap Z(GL_{n_1}(q)\times \GL_{n_2}(q))\cong z\times z$, and such that $\pi(H_X)=H$ where $\pi$ is the natural map,
$$\pi:\GL_{n_1}(q)\times \GL_{n_2}(q)\to \GL_{n_1}(q)\circ \GL_{n_2}(q).$$
Let $H_2$ be the image of $H_X$ under the natural projection $\GL_{n_1}(q)\times \GL_{n_2}(q)\to GL_{n_2}(q)$. Observe that $H_2/(H_2\cap Z(\GL_{n_2}(q))\cong H/N$. By induction there exists an involution $g_2\in H_2$ such that 
$$|H_2: C_{H_2}(g_2)|_{p'}\leq q^{n_2-1}+\dots+q+1.$$
Let $g_X$ be an involution and preimage of $g_2$ in $H_X$; define $g=\pi(g_X)$, an involution in $H$. Observe that
$$|H/N:C_{H/N}(gN)| = |H_2: C_{H_2}(g_2)|_{p'}\leq q^{n_2-1}+\dots+q+1.$$
Furthermore either $g$ centralizes $N$ or elese $\langle g, N\rangle$ is isomorphic to a subgroup of $\GL_{n_1}(q)$. Either way, induction implies that 
$$|N:C_N(g)|_{p'}\leq q^{n_1-1}+\cdots+q+1,$$
and the result follows.
\end{proof}

\subsubsection{$\curly_2(\GL_n(q))$}

The final case to consider is when $H\leq M\in\curly_2(\GL_n(q))$. Then $M=\GL_m(q)\wr S_t, n=mt, t\geq2.$ Write 
$$N=\underbrace{\GL_m(q)\times\dots\times \GL_m(q)}_t.$$ 

We assume that $M$ acts primitively on the associated decomposition of the associated vector space $V$ into $t$ subspaces of dimension $m$, since otherwise $H$ lies in a maximal subgroup $M_1=\GL_{m_1}(q)\wr S_{t_1}$, $M_1\in\curly_2(\GL_n(q))$ with $t_1<t$ such that $H$ acts primitively on this decomposition.

We need a result of Praeger and Saxl \cite{ps}.

\begin{lemma}\label{l: cheryljan}
 Let $G$ be a primitive subgroup of the symmetric group $S_n$. Then $G$ contains $A_n$ or $|G|\leq 4^n$.
\end{lemma}

An immediate corollary is the following:

\begin{corollary}\label{c: cheryljan}
Let $G$ be a primitive subgroup of the symmetric group $S_n$. If $n\geq 5$, then $G$ contains an involution $g$ such that $|G:C_G(g)|\leq 5^{n-1}$.
\end{corollary}
\begin{proof}
Lemma~\ref{l: cheryljan} implies that either $|H/H\cap N|\leq 4^n$ (which yields the result for $n\geq 8$) or $|H/H\cap N|$ contains the alternating group $A_n$.

If $H/H\cap N$ contains the alternating group $A_n$, then $H/H\cap N$ contains an involution $g$ such that
\[
  |H/(H\cap N): C_{H/(H\cap N)}(g(H\cap N))|<n^4.
 \]
Again this yields the result for $n\geq 8$. If $n<8$, then $|S_n|=n!<5^{n-1}$ and we are done. 
\end{proof}

\begin{lemma}
Suppose that $H\cap N$ has odd order, $H$ has even order, and $t<n$. Then $H$ contains an involution $g$ such that
$|H:C_H(g)|_{p'}\leq q^{n-1}+\dots+q+1$.
\end{lemma}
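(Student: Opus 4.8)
The plan is to combine the factorisation of $|H:C_H(g)|$ across the odd normal subgroup $K:=H\cap N$ with a judicious choice of involution. The key structural observations are: since $K$ has odd order while $H$ has even order, every involution $g\in H$ projects to a \emph{nontrivial} involution $\bar g$ in $\bar H:=H/K\leq S_t$; conversely, since $K$ has odd order, every involution of $\bar H$ lifts to an involution of $H$ (lift a Sylow $2$-subgroup of the relevant preimage), so I am free to choose which $\bar H$-class of involution to work with. By the standing assumption $\bar H$ acts primitively on the $t$ factors, and $t<n=mt$ forces $m\geq2$, which I use throughout. Applying Lemma \ref{L: oddnormal} with the odd normal subgroup $K$ gives the exact factorisation $|H:C_H(g)|=|K:C_K(g)|\cdot|\bar H:C_{\bar H}(\bar g)|$, and since $|{\cdot}|_{p',\heartsuit}$ is submultiplicative over exact products it suffices to bound the $(p',\heartsuit)$-part of each factor.

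For the first factor I would decompose $N$ into the $g$-invariant blocks determined by $\bar g$: if $\bar g$ is a product of $k$ transpositions then there are $t-2k$ singleton blocks $GL_m(q)$ (on which $g$ acts as conjugation by an involution of $GL_m(q)$, since $g^2=1$) and $k$ paired blocks $GL_m(q)\times GL_m(q)$ (on which $g$ acts by a twisted swap). Because $K$ is odd and $g$ has order $2$, coprime action gives $C_{K/T}(g)=C_K(g)T/T$ for every $g$-invariant $T\lhd K$; iterating over the blocks exactly as in Lemma \ref{L: invcentralizer} yields $|K:C_K(g)|=\prod_i|T_i:C_{T_i}(g_i)|$ as an exact product over the block kernels. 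Each singleton block contributes at most $q^{m-1}+\dots+q+1$ by Lemma \ref{L: reduced bound}; each paired block contributes, exactly as in the swap case of Lemma \ref{L: reduced bound}, at most the order of an odd subgroup of $GL_m(q)$, hence at most $q^{(3m-2)/2}$ by Corollary \ref{c: oddgln}. Multiplying these and using $m\geq2$ (so that $1-\tfrac m2\leq0$ and $k\geq1$ make the paired blocks cost no more than the singletons would), the exponents combine to give $|K:C_K(g)|_{p',\heartsuit}<q^{(m-1)t}=q^{n-t}$, with room to spare.

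It remains to bound $|\bar H:C_{\bar H}(\bar g)|_\heartsuit=|\bar g^{\bar H}|_\heartsuit$ by $q^{t-1}$, and \emph{this is the main obstacle}: $\bar H$ may be as large as $S_t$, so, unlike the odd-order situation of Lemma \ref{l: oddsn}, I cannot simply bound the class by $|\bar H|$. The resolution is to choose $\bar g$ of minimal support and to split via the elementary dichotomy for primitive groups. Either $\bar H\supseteq A_t$, in which case $\bar H$ contains an involution moving at most four points (a transposition if $\bar H=S_t$, a double transposition otherwise), whence $|\bar g^{\bar H}|\leq|\bar g^{S_t}|<t^4$; or $\bar H$ is a proper primitive group, for which the classical order bound $|\bar H|<4^t$ gives $|\bar g^{\bar H}|\leq|\bar H|<4^t$ for any involution $\bar g$. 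In both cases $|\bar g^{\bar H}|_\heartsuit<49^{t-1}\leq q^{t-1}$ for all $t\geq2$, since $q\geq49$.

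Combining the last two paragraphs, $|H:C_H(g)|_{p',\heartsuit}<q^{n-t}\cdot q^{t-1}=q^{n-1}<q^{n-1}+\dots+q+1$, as required. The crude $t^4$ and $4^t$ estimates leave ample slack to absorb the $(q/(q-1))^t$ factors arising from the geometric sums $q^{m-1}+\dots+1$, so the elementary inequalities $t^4<49^{t-1}$ and $4^t<49^{t-1}$ (valid for $t\geq2$) are all that is needed to finish. The one point genuinely requiring care is the paired-block estimate, which rests on the coprime swap-action computation already used (and asserted) in Lemma \ref{L: reduced bound}; I would reproduce that argument explicitly, using that $C_{T_i}(g_i)$ is the intersection of the odd group $T_i$ with the twisted diagonal and that the resulting index embeds into one copy of $GL_m(q)$.
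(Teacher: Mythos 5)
Your overall architecture matches the paper on its first half and diverges, interestingly, on the second. Like the paper, you factor $|H:C_H(g)|$ across the odd normal subgroup $K=H\cap N$ via Lemma \ref{L: oddnormal}, decompose into $g$-invariant blocks, and bound fixed blocks by Lemma \ref{L: reduced bound} and swapped pairs by the order of an odd subgroup of $GL_m(q)$ via Corollary \ref{c: oddgln} (the paper's own asserted swap computation). For the quotient $\bar H\leq S_t$, however, the paper embeds $S_t<GL_{t-1}(q)$ via the deleted permutation module and invokes the Inductive Hypothesis to produce an involution with $|\bar H:C_{\bar H}(\bar g)|_{p',\heartsuit}\leq q^{t-2}+\dots+q+1\leq (q-1)^{t-1}$, which multiplies cleanly against the block bound $\frac{q^n-1}{(q-1)^t}$ with no case analysis. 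You instead use the primitive dichotomy (contain $A_t$, choose a minimal-support involution; else $|\bar H|<4^t$ by Praeger--Saxl) --- which is precisely the technique the paper reserves for its \emph{next} lemma, the $t=n$ case (Lemma \ref{L: sninvolutions}). Your route is more uniform (it would merge the $t<n$ and $t=n$ cases) and avoids invoking Lemma A in dimension $t-1$; the paper's route buys sharper constants and no dichotomy. A minor bookkeeping point: your intermediate claim $|K:C_K(g)|_{p',\heartsuit}<q^{n-t}$ is false as stated when $k$ is small (e.g.\ $m=2$, $k=1$ gives roughly $(q+1)^{t-2}q^2>q^t$); it holds only up to the $\left(\frac{q}{q-1}\right)^{t-2k}$ factor you acknowledge later, so that factor must be carried explicitly into the final verification.

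There is, though, one genuine error: your closing arithmetic rests on ``$q\geq 49$,'' which is not available here. That inequality holds in Lemma \ref{L: second gap} because $q$ is a square there (an involutory field automorphism exists), but in the present lemma $q=p^a$ with $p\geq 7$, so $q$ may be $7,13,19,\dots$. With $q=7$ your chain $4^t<49^{t-1}\leq q^{t-1}$ breaks: $4^t\leq 7^{t-1}$ fails for $t=2,3$, and at $t=4$ it is destroyed by the slack factor you must absorb, since $4^4\left(\frac{7}{6}\right)^2>7^3$; likewise $t^4\leq 7^{t-1}$ fails at $t=3$. The gap is repairable, but only by an argument you have not supplied: for $t\leq 4$ every even-order primitive subgroup of $S_t$ contains $A_t$ (so your first branch applies, with the true class sizes $\binom{t}{2}$, resp.\ $\frac{t(t-1)(t-2)(t-3)}{8}$, which do beat $7^{t-1}$ even with slack), while for $t\geq 5$ one checks $4^t\left(\frac{7}{6}\right)^{t-2}\leq 7^{t-1}$ directly; alternatively one can imitate the paper's own $t=n$ lemma, which splits off $q<43$ and exploits $|q-1|_{p',\heartsuit}\leq 3$. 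As written, your proof is incomplete exactly at the small-$q$, small-$t$ boundary.
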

\begin{proof}
By Corollary~\ref{c: oddgln},
$$|H\cap N|_{p'}<(q^{\frac{3m-2}{2}})^t.$$
Write $N=N_1\times \cdots N_t$ where $N_i\cong \GL_m(q), i=1,\dots, t$. Then, for $g$ an involution in $H$, we can choose an ordering of the $N_i$ so that $g$ either normalizes $N_i$ or swaps $N_i$ with $N_{i\pm 1}$. Let $H_i$ be the projection of $H$ into $N_i$; if $g$ normalizes $N_i$ then write $g_i$ for the element of $N_i$ induced by the action of $g$. Since $N$ is odd, $\langle g_i, H_i\rangle$ contains at most one conjugacy class of involutions and induction implies that
$$|H_i: C_{H_i}(g_i)|_{p'}\leq q^{m-1}+\cdots +q+1.$$
If $g$ swaps $N_i$ and $N_{i+1}$ then write $H_{i,i+1}$ for the projection of $H$ into $N_i\times N_{i+1}$ (observe that the projection of $H$ onto $N_i$ is isomorphic to the projection of $H$ onto $N_{i+1}$); write $g_{i,i+1}$ for the element of $N_i\times N_{i+1}$ induced by the action of $g$. Then Corollary~\ref{c: oddgln} implies that
$$|H_{i,i+1}: C_{H_{i,i+1}}(g_{i,i+1})|\leq \sqrt{|H_{i,i+1}|}\leq q^{\frac{3m-2}4}.$$
Now write $g$ as a product of $k$ disjoint transpositions in its action upon the set $\{N_1, \dots, N_t\}$. Then
$$|H\cap N:C_{H\cap N}(g)|_{p'}\leq (q^{\frac{3m-2}2})^k(q^{m-1}+\cdots +q+1)^{t-2k}\leq \frac{q^n-1}{(q-1)^t}.$$
Now Lemma \ref{L: oddnormal} implies that
$$|H:C_H(g)|=|H\cap N:C_{H\cap N}(g)|\times |H/(H\cap N): C_{H/(H\cap N)}(g(H\cap N))|,$$
hence it is enough to prove that $H/(H\cap N)$ contains an involution $h$ such that 
$$|H/(H\cap N): C_{H/(H\cap N)}(h)|_{p'}\leq (q-1)^{t-1}.$$
Corollary~\ref{c: cheryljan} yields the result.
\end{proof}

We next consider the possibility that $H\cap N$ has odd order and $t=n$.

\begin{lemma}
Suppose that $H\cap N$ has odd order and $t=n$. Then $H$ contains an involution $g$ such that
$|H:C_H(g)|_{p'}\leq q^{n-1}+\dots+q+1$.
\end{lemma}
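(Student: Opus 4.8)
The plan is to follow the pattern of the surrounding lemmas: use Lemma \ref{L: oddnormal} to split the index $|H:C_H(g)|$ across $N$, and then bound the two resulting factors separately, passing to $(\cdot)_{p',\heartsuit}$-parts. Since $t=n$ forces $m=1$, here $N=\underbrace{GL_1(q)\times\cdots\times GL_1(q)}_{n}$ is the full diagonal torus, $H\cap N$ is an odd-order subgroup of it, and $H/(H\cap N)$ is a primitive subgroup of $S_n$ of even order. First I would choose an involution $\overline{g}\in H/(H\cap N)$ realising the bound of Lemma \ref{L: sninvolutions}, so that $|H/(H\cap N):C_{H/(H\cap N)}(\overline{g})|<42^{\frac{n-2}{2}}$, and lift it to an involution $g\in H$; such a lift exists because $H\cap N$ has odd order, so a Sylow $2$-subgroup of the preimage of $\langle\overline{g}\rangle$ maps isomorphically onto $\langle\overline{g}\rangle$. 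Since $|H\cap N|$ is odd, Lemma \ref{L: oddnormal} then gives
$$|H:C_H(g)|=|H\cap N:C_{H\cap N}(g)|\times|H/(H\cap N):C_{H/(H\cap N)}(\overline{g})|,$$
and, as $(\cdot)_{p',\heartsuit}$ is submultiplicative, it suffices to bound the $(\cdot)_{p',\heartsuit}$-part of each factor.

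The symmetric-group factor is immediate: its $(\cdot)_{p',\heartsuit}$-part is at most the factor itself, hence less than $42^{\frac{n-2}{2}}$. For the torus factor the crucial observation is that, because $N$ is abelian, conjugation by $g$ acts on $N$ exactly as the permutation $\overline{g}$ does; writing $\overline{g}$ as a product of $k\leq \frac{n}{2}$ transpositions we obtain $C_N(g)=C_N(\overline{g})\cong (q-1)^{n-k}$, so $|H\cap N:C_{H\cap N}(g)|$ divides $|N:C_N(\overline{g})|=(q-1)^{k}$. Now when passing to the $(\cdot)_{p',\heartsuit}$-part, the factor $3$ (present when $3\mid q-1$) is retained only \emph{once} however large $k$ is, while the $2$-part and $p$-part are discarded; hence $\big((q-1)^{k}\big)_{p',\heartsuit}\leq 3\,e^{k}\leq 3\,e^{\frac{n}{2}}$, where $e=\prod_{\ell\equiv 1\,(3),\,\ell\neq p}(q-1)_\ell$ is the part of $q-1$ supported on primes $\equiv 1\pmod 3$ other than $p$. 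In particular, for $q\in\{7,13,19,31,37\}$ one has $e=1$, so the whole torus factor contributes at most $3$.

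It then remains to verify the inequality $3\,e^{\frac{n}{2}}\,42^{\frac{n-2}{2}}\leq q^{n-1}+\dots+q+1$, and I expect this numerical check to be the main obstacle, since it depends genuinely on the factorisation of $q-1$ through $e$ and the crude bound $e\le q-1$ is too weak near the threshold $q\approx 42$. The argument splits cleanly into two ranges. For $q<42$ we have $e=1$, so one need only check $3\cdot 42^{\frac{n-2}{2}}\leq \frac{q^{n}-1}{q-1}$; comparing logarithmic growth rates (the slope $\tfrac12\ln 42$ is smaller than $\ln q$ already for $q=7$) confirms this for all $n\geq 3$. For $q>42$, the complementary factor $(q-1)/e$ contains the $2$-part, so $e\leq\frac{q-1}{2}<q$, whence $3\,e^{\frac{n}{2}}\,42^{\frac{n-2}{2}}\leq\frac1{14}\big(21(q-1)\big)^{\frac n2}$, and an elementary estimate reduces everything to $\big(\tfrac{21}{q}\big)^{\frac{n-2}{2}}\leq\frac23$; this holds whenever $q>42$ except for the single pair $(q,n)=(43,3)$, for which one substitutes the true value $e=7$ and checks the bound directly.
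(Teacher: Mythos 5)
Your proposal is correct and follows essentially the same route as the paper: split $|H:C_H(g)|$ via Lemma \ref{L: oddnormal} after lifting an involution from the primitive quotient, bound the symmetric-group factor by Lemma \ref{L: sninvolutions}, bound the torus factor using the fact that $N\cong (q-1)^n$ is abelian so $g$ acts purely by permuting coordinates, and handle $q\in\{7,13,19,31,37\}$ by the observation that the relevant $\heartsuit$-part collapses to $3$. The only divergence is your endgame for $q>42$: the paper simply uses $|H:C_H(g)|_{p',\heartsuit}<(q-1)^{n/2}\cdot 42^{(n-2)/2}\leq (q-1)^{n-1}<q^{n-1}+\dots+q+1$, valid uniformly for all $q\geq 43$, which makes your refined quantity $e$, the exceptional pair $(q,n)=(43,3)$, and the patch $e=7$ all unnecessary (note also that your reduction to $(21/q)^{(n-2)/2}\leq\frac23$ fails at $n=2$, harmlessly, since the ambient induction of this subsection operates under $n>2$, the case $n=2$ being Lemma \ref{l: gl2}).
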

\begin{proof}
Observe that $N=(q-1)^n$. First consider $C_{S}(g)$ where $S$ is the projection of $H$ onto a particular $q-1$ which is fixed by $g$. Clearly $|S: C_{S}(g)|_{p'}=1.$ Alternatively if $(q-1)\times (q-1)$ are swapped by $g$, and $S$ is the projection of $H$ onto $(q-1)\times(q-1)$ then it is clear that $|S: C_{S}(g)|_{p'}< \frac{q-1}{2}$. Thus, in all cases,
$$|N:C_N(g)|_{p'}<(\frac{q-1}{2})^{\frac{n}{2}}.$$
Lemma \ref{L: oddnormal} implies that it is sufficient to prove that $H/N$ contains an involution $h$ such that 
\[|H/N:C_{H/N}(h)|\leq 2^{n/2}(q-1)^{n/2-1}.\]
Corollary~\ref{c: cheryljan} implies that $H/N$ contains an involutions $h$ such that
\[|H/N:C_{H/N}(h)|\leq 5^{n-1}\]
and this yields the result provided $q\geq 43$ and $n\geq 6$.

If $3\leq n\leq 5$, then $|H/N|_{}= 3$ and the result follows. If $n=2$, then $|H/N|_{}= 1$ and the result follows.

If $q<43$ then $q=7,13,19,31$ or $37$. In all cases $|q-1|_{p'}=3$ and hence $|N:C_N(g)|_{p'}\leq3$. Thus, by Lemma \ref{L: oddnormal}, 
$$|H:C_H(g)|_{p'}\leq 3\times 5^{n-1}\leq q^{n-1}+\dots+q+1.$$
This yields the result for $n\geq 6$ and we are done.
\end{proof}

The final subcase is when $H\cap N$ has even order. We need a couple of classical theorems, the first is Glauberman's $Z^*$-theorem \cite{glaub}.

\begin{theorem}\label{t: zstar}
 Let $x$ be an element of $S$, a Sylow $2$-subgroup of a finite group $K$. Let $O(K)$ be the largest normal odd-orber subgroup of $K$ and let $Z^*(K)$ be the preimage of $Z(K/O(K))$ in $K$. Then $x\not \in Z^*(G)$ if and only if there exists $y\in C_s(x)\backslash\{x\}$ such that $y$ is conjugate to $x$ in $G$. 
\end{theorem}

The second is (an easy consequence of) Alperin's fusion theorem (see \cite[p.244]{gorenstein3}).

\begin{theorem}\label{t: alperin}
Let $g$ and $h$ be elements of $S$, a Sylow $t$-subgroup of a finite group $K$. Suppose that $g^x=h$ for some $x\in K$. Then there exist elements $x_i$ and Sylow $t$-subgroups $Q_i$ of $G$, $1\leq i\leq n$, and an element $y\in N_G(S)$ such that $x=x_1\cdots x_n y$, $g\in P\cap Q_1$ and, for $i=1,\dots, n$, $x_i\in N_G(P\cap Q_i$ and $g^{x_1x_2\cdots x_i}\in P\cap Q_{i+1}$.
\end{theorem}

\begin{lemma}\label{l: el}
 Suppose that $E$ is an elementary abelian $2$-subgroup of $\GL_n(q)$ with $q$ odd. Then $|E|\leq 2^n$.
\end{lemma}
\begin{proof}
 A commuting set of semisimple elements is the subset of a torus of $\GL_n(q)$. Since a torus is a direct product of at most $n$ cyclic groups, the result follows.
\end{proof}

We are ready to deal with the final subcase.

\begin{lemma}
Suppose that $H\leq \GL_m(q)\wr S_t$ where $n=mt$. Write 
$$N=\underbrace{\GL_m(q)\times\dots\times \GL_m(q)}_t.$$
Suppose that $H\cap N$ has even order. Then $H$ contains an involution $g$ such that
$$|H:C_H(g)|_{p'}\leq q^{n-1}+\dots+q+1.$$
\end{lemma}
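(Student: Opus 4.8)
The plan is to select a single involution $g$ inside the normal subgroup $D:=H\cap N$ (which contains an involution since $|D|$ is even) and to split $|H:C_H(g)|$ into the part coming from conjugation \emph{inside} $D$ and the part coming from the permutation action of $H$ on the $t$ components. Since $D\lhd H$ and $g\in D$, Lemma~\ref{L: sylowtwos} applies with the normal subgroup taken to be $D$, giving
\[
|H:C_H(g)|=|D:C_D(g)|\cdot\frac{|g^H\cap P|}{|g^D\cap P|},
\]
where $P$ is a Sylow $2$-subgroup of $D$. A short computation (using $C_D(g)=C_H(g)\cap D$) identifies the second factor with $|H:C_H(g)D|$, so it divides $|H/D|$; and by the primitivity reduction made just before this lemma, $H/D$ is a \emph{primitive} subgroup of $S_t$. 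The key point is that this permutation contribution is thereby controlled by the order of a primitive subgroup of $S_t$, rather than by the crude bound $2^{n+\frac n2-1}$ of Lemma~\ref{l: conj in sylow}, which here would be hopelessly large.

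For the first factor I would use that $D$ is a subgroup of $GL_m(q)\times\cdots\times GL_m(q)$ ($t$ copies) and apply Lemma~\ref{L: invcentralizer} with every $H_i=GL_m(q)$. First note that all coordinate projections of $D$ have even order: they are permuted transitively by $H$, hence are isomorphic, and a product of odd-order groups cannot contain the even-order group $D$. Writing $T_1,\dots,T_t$ for the kernels of that lemma (each a subgroup of $GL_m(q)$) and ordering the components so that $T_1,\dots,T_{k-1}$ are odd and $T_k$ is the first even kernel, I would choose $g$ so that its image in $T_k$ is an involution of smallest centralizer index; the Inductive Hypothesis applied to $T_k\le GL_m(q)$ bounds that factor by $q^{m-1}+\dots+q+1$, and this involution lifts to an involution of $D$ precisely because the lower kernels $T_1,\dots,T_{k-1}$ have odd order. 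For each odd kernel, Lemma~\ref{L: reduced bound} bounds $|T_i:C_{T_i}(g_i)|_{p',\heartsuit}$ by $q^{m-1}+\dots+q+1$ \emph{whatever} involution $g_i$ arises. Using that $k_\heartsuit$ is sub-multiplicative, and bounding the single internal fusion term of Lemma~\ref{L: invcentralizer} by $2^{m+\frac m2-1}$ via Lemma~\ref{l: conj in sylow} (this term is absent when $k=t$), I would obtain
\[
|D:C_D(g)|_{p',\heartsuit}\le\Big(\tfrac{q^m-1}{q-1}\Big)^{k}\,2^{m+\frac m2-1}.
\]

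It then remains to combine the two estimates and verify
\[
\Big(\tfrac{q^m-1}{q-1}\Big)^{k}2^{m+\frac m2-1}\,\big|H/D\big|_{p',\heartsuit}\le \frac{q^{mt}-1}{q-1}.
\]
The quotient $\tfrac{q^{mt}-1}{q-1}\big/\big(\tfrac{q^m-1}{q-1}\big)^{k}$ is of size roughly $q^{m(t-k)+k-1}\ge q^{m}$, while $2^{m+\frac m2-1}<7^m\le q^m$, so abundant room is left for the permutation factor. For that factor I would bound $|H/D|_{p',\heartsuit}$ using Lemma~\ref{l: oddsn} on its odd part and the order bounds for primitive groups underlying Lemma~\ref{L: sninvolutions} on its even part, the essential observation being that $(t!)_{p',\heartsuit}$ grows only sub-exponentially in $t$ and is therefore dominated by $7^{t-1}\le q^{t-1}$.

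The step I expect to be the main obstacle is the simultaneous control of the \emph{support} of $g$. A small support (small $k$) keeps the internal factor small but allows a large $H/D$-orbit, hence a potentially large permutation fusion, whereas a large support reverses this. Pinning down this balance cleanly — in particular ruling out any conspiracy between the number $k$ of active kernels, the $2$-power fusion term, and the primitive group $H/D$ — will require splitting into the cases $k=t$ and $k<t$, and dealing with the smallest values of $t$, where the asymptotic estimates above are weakest, by direct verification.
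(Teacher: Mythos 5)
Your opening reductions are sound and coincide with the paper's first stage: choosing $g$ inside $D=H\cap N$ via the kernel filtration $T_1,\dots,T_t$, lifting through the odd kernels, and bounding $|D:C_D(g)|_{p',\heartsuit}$ by $\bigl(\tfrac{q^m-1}{q-1}\bigr)^{k}2^{m+\frac m2-1}$ using the Inductive Hypothesis, Lemma \ref{L: reduced bound} and Lemma \ref{l: conj in sylow} is exactly how the paper begins, and your identification of the external fusion factor with $|H:C_H(g)D|$, hence a divisor of $|H/D|$, is correct. The fatal step is your ``essential observation'' that $(t!)_{p',\heartsuit}$ grows only sub-exponentially and is dominated by $7^{t-1}$. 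This is false: $(t!)_{p'}{}_{,\heartsuit}$ is roughly $\prod_{p\le t,\,p\equiv 1 (3)}p^{t/(p-1)}$, and by Mertens' theorem $\sum_{p\le t,\ p\equiv 1(3)}\frac{\log p}{p-1}\sim\tfrac12\log t$ diverges, so $(t!)_{\heartsuit}$ grows like $t^{t/2}$ --- super-exponentially, eventually exceeding $C^{t}$ for every constant $C$. Your primitivity reduction does rescue the case where $H/D$ does not contain $A_t$ (by Praeger--Saxl its order is then below $4^t<(q-1)^{t-1}$), but when $H/D\supseteq A_t$, which primitivity permits, and $q$ is small, the product bound $|D:C_D(g)|_{p',\heartsuit}\cdot|H/D|_{p',\heartsuit}$ genuinely exceeds $q^{n-1}+\dots+q+1$, and no choice of $g$ within your framework repairs this. (A secondary slip: your claim that $\tfrac{q^{mt}-1}{q-1}\big/\bigl(\tfrac{q^m-1}{q-1}\bigr)^{k}$ is at least $q^m$ fails for $k$ near $t$; when $k=t$ it is only about $(q-1)^{t-1}$, and the paper's hard regime is precisely $m>t$.)

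This is exactly where the paper's proof diverges from yours. It uses the crude product bound only when $|t!|_{p',\heartsuit}\le 6^{t-1}$, and otherwise (so $t>20$) runs a genuine fusion analysis: a Sylow-transfer computation via Lemma \ref{l: sylow odd order} yields $|H:C_H(g)|_{p',\heartsuit}<(q^{m-1}+\dots+q+1)^k(2^{m+\frac m2-1})^{t-k+1}$, forcing $k\ge t-1$; a further comparison forces $m>t$; and then two fused conjugates $g,h\in g^H\cap P$ are analysed together. If their images differ in some coordinate $i\le k$, Lemma \ref{L: klein} (the two-involution bound $\bigl||T{:}C_T(g)||T{:}C_T(h)|\bigr|_{p',\heartsuit}<q^{3mk/2}c_{mk}$) lets one replace $g$ by whichever of the pair has centralizer index below $\sqrt{3c_{mk}}\,q^{3mk/4}$; if they agree on the first $k-1\ge t-2$ coordinates, a stabilizer argument gives $|g^H|\le t(t-1)|g^{H\cap N}|$. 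None of this machinery appears in your proposal, and you correctly sensed the obstacle (the interplay of $k$, the $2$-power fusion term, and $H/D$) but proposed to dissolve it with a growth estimate that does not hold.
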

\begin{proof}

Suppose first that $m=1$. Then $N$ contains at most $2^n$ involutions and so 
$$|H:C_H(g)|_{p'}<2^n\leq q^{n-1}+\dots+q+1,$$
as required. Assume now that $m>1$.

Let $V$ be the associated vector space for $\GL_n(q)$. Write $N=N_1\times \dots \times N_t$ with $N_i\cong \GL_m(q)$ for $i=1,\dots,t$. Our analysis (and our notation) mirrors the set up in Lemma \ref{L: invcentralizer}. Write $L_i$ for the projection of $H\cap N$ onto $N_i\times\dots\times N_t$ and write $\psi_i:L_i\to L_{i+1}$ for the natural projection. Let $T_i$ be the kernel of $\psi_i$ for $1\leq i<t$; define $T_t:= L_t$. Suppose that $|T_i|$ is odd for $i<k\leq t$ and $|T_i|$ is even for $i=k$.

Observe that $T_k\leq \GL_m(q)$. Then, by induction, take an involution $g_k\in T_k$ such that 
$$|T_k:C_{T_k}(g_k)|_{p'}\leq q^{m-1}+\cdots+q+1.$$
Let $g\in G$ be an involution and pre-image of $g_k$ in $H\cap N$ (such a pre-image must exist since $|T_i|$ is odd for $i<k$. Write $g_i$ for the image of $g$ under the projection of $H\cap N$ onto $L_i$. We proceed in stages:

\begin{enumerate}
\item {\bf An upper bound for} $|H\cap N:C_{H\cap N}(g)|_{p'}$.  Since $|T_i|$ is odd for $i<k$, $\langle T_i, g_i\rangle$ contains at most one conjugacy class of involutions, and induction implies that 
$$|T_i: C_{T_i}(g_i)|_{p'}\leq q^{m-1}+\dots+q+1,$$
and Lemma \ref{l: conj in sylow} implies that $|(g_k)^{L_k}\cap P_k|<2^{m+\frac{m}2-1}$ where $P_k$ is a Sylow $2$-subgroup of $T_k$. Furthermore Lemma~\ref{L: invcentralizer} implies that 
\[|H\cap N:C_{H\cap N}(g)|_{p'}=\prod_{i=1}^k|T_i:C_{T_i}(g_i)|_{p'}\frac{|(g_k)^{L_k}\cap P_k|_{p'}}{|(g_k)^{T_k}\cap P_k|_{p'}}.\]
(Note that if $k=t$ the final fraction is equal to $1$.) Now combining these observations we obtain that 
\begin{equation*}
 \begin{aligned}
|H\cap N:C_{H\cap N}(g)|_{p'}&=\prod_{i=1}^k|T_i:C_{T_i}(g_i)|_{p'}\frac{|(g_k)^{L_k}\cap P_k|_{p'}}{|(g_k)^{T_k}\cap P_k|_{p'}} \\
&\leq (q^{m-1}+\cdots+q+1)^k \frac{|(g_k)^{L_k}\cap P_k|_{p'}}{|(g_k)^{T_k}\cap P_k|_{p'}} \\
& \leq 
\left\{\begin{array}{ll}
(q^{m-1}+\cdots+q+1)^k 2^{m+\frac{m}2-1}, & k<t \\
(q^{m-1}+\cdots+q+1)^t, & k = t
\end{array}\right. \\
& \leq \frac{(q^m-1)^t}{(q-1)^t} \\
& < \frac{q^{n-1}+\cdots+q+1}{(q-1)^{t-1}}.
\end{aligned}
\end{equation*}

Notice that if $|H:C_H(g)| = |H\cap N:C_{H\cap N}(g)|$ then the result follows immediately; thus we assume that this is not the case, i.e. that $H$ fuses the conjugacy class containing $g$ to another.

\item {\bf A lower bound for} $t$. It is clear that
\begin{equation*}
\begin{aligned}
|H:C_H(g)|_{p'} 
&\leq|H\cap N:C_{H\cap N}(g)|_{p'}|H/(H\cap N)|_{p'} \\
&<\frac{q^{n-1}+\cdots+q+1}{(q-1)^{t-1}} |t!|_{p'}
\end{aligned}
\end{equation*}
Thus the result holds provided $|t!|_{p'}\leq6^{t-1}$. So assume that $|t!|_{p'}>6^{t-1}$; in particular assume that $t>20$.


\item {\bf A lower bound for} $k$. Let $P$ be a Sylow $2$-subgroup of $H\cap N$
Lemma \ref{L: sylowtwos} implies that, for $g\in P$,
\begin{equation}\label{m}
|H:C_H(g)|_{p'} =|H\cap N:C_{H\cap N}(g)|_{p'}\frac{|g^H\cap P|_{p'}}{|g^{H\cap N}\cap P|_{p'}}.
\end{equation}
Now let $\phi:N\to L_k$ be the natural projection map; then $\phi(P)\cong P$ is a Sylow $2$-subgroup of $L_k$. Let $P_k=\phi(P)\cap T_k$ and choose $g$ such that $\phi(g)\in P_k$. Then Lemma~\ref{L: invcentralizer} implies that
\begin{equation}\label{n}
 |H\cap N:C_{H\cap N}(g)|_{p'}=\prod_{i=1}^k|T_i:C_{T_i}(g_i)|_{p'}\frac{|(g_k)^{L_k}\cap P_k|_{p'}}{|(g_k)^{T_k}\cap P_k|_{p'}}.
\end{equation}
Let $P'\leq P$ be such that $\phi(P')=P_k$; let $T$ be the full pre-image of $T_k$ in $H\cap N$. 
Then Lemma \ref{l: sylow odd order} implies that
\begin{equation}\label{o}
 \begin{aligned}
  |g^{H\cap N}\cap P| &= |(g_k)^{L_k}\cap P_k| \\
  |g^{T}\cap P| &= |(g_k)^{T_k}\cap P_k| \\
\end{aligned}
\end{equation}
Combining \eqref{m}, \eqref{n} and \eqref{o}, we obtain that
\begin{equation*}
|H:C_H(g)|_{p'}=\prod_{i=1}^k|T_i:C_{T_i}(g_i)|_{p'}\frac{|g^{H}\cap P|_{p'}}{|g^{T}\cap P|_{p'}}.
\end{equation*}
Since $T_k$ has odd order, $\langle T_k, g_k\rangle$ has at most one conjugacy class of involutions. Then Lemma \ref{l: conj in sylow} and induction imply that 
\begin{equation*}
 \begin{aligned}
|H:C_H(g)|_{p'}&<(q^{m-1}+\dots+q+1)^k (3.04^m+1)^{t-k+1}
\end{aligned}
\end{equation*}
This yields the result unless $k=t$ or $(q,k)=(7,t-1)$. We assume that one of these holds from here on and we note that if $k=t$, then $T=H\cap N$. We assume, moreover, that when $q=7$ we have chosen an ordering for $N_1,\dots, N_k$ so that $k$ is minimal.

\item {\bf A lower bound for} $m$. From the previous paragraph observe that
\begin{equation*}
\begin{aligned}
|H:C_H(g)|_{p'}&<(q^{m-1}+\dots+q+1)^k (3.04^m+1)^{t-k+1} \\
&\leq (q^{m-1}+\dots+q+1)^t(3.04^m+1) \\
&=(q^m-1)^t\frac{3.04^m+1}{(q-1)^t}\\
&<(q^{n-1}+\dots+q+1)\frac{3.04^m+1}{(q-1)^{t-1}}.
 \end{aligned}
\end{equation*}
If $3.04^m+1\leq (q-1)^{t-1}$ then the result follows. Thus we assume that $3.04^m+1> (q-1)^{t-1}$; in particular $m\geq t>20$ and $n\geq 400$.

\item{\bf No conjugates of $g_k$ in $P_k$.} By assumption $g_k\in P_k$ is chosen so that $|T_k:C_{T_k}(g_k)|_{p'}$ is minimal. Suppose that there were no other $T_k$-conjugates of $g_k$ in $P_k$. Then Theorem~\ref{t: zstar} implies that $g_k \in Z^*(T_k)$ and so $g\in Z^*(T)$. In particular 
\[|H:C_H(g)|_{p'}\leq |T:C_T(g)|_{p'} \cdot |Z_2(T/O(T))|\]
where $Z_2(T/O(T))$ is the unique maximal elementary abelian 2-subgroup of $Z(T/O(T))$. (Recall that $O(T)$ is the largest odd-order normal subgroup of $T$.)
Now, since \[
 |T:C_T(g)|_{p'}=\prod_{i=1}^k|T_i:C_{T_i}(g_i)|_{p'}\leq \frac{(q^m-1)^t}{(q-1)^t}.
\]
the result follows unless $|Z_2(T/O(T))|\geq (q-1)^{t-1}$. Thus assume that this is the case and let $W$ be the pre-image of $Z_2(T/O(T))$ in $H\cap N$ and let $P'$ be a Sylow $2$-subgroup of $W$. Clearly $W=O(T) P'$ and $P'$ is elementary abelian. Take $g,h$ to be distinct elements of $P'\backslash\{1\}$ and observe that Lemma~\ref{L: oddnormal} implies that $|T:C_T(g)| = |O(T):C_{O(T)}(g)|$ (similarly for $h$).


Observe that $\langle O(T), g, h\rangle$ is isomorphic to a subgroup of $\GL_{mk}(q)$ and that $\langle g,h\rangle$ is a Klein 4-group. We apply Lemma~\ref{L: klein} to the group $\langle O(T), g,h\rangle$ and (by relabelling $g$ and $h$ if necessary) conclude that
\[
|T:C_T(g)|_{p'}= |O(T):C_{O(T)}(g)|_{p'} \leq \sqrt{q^{\frac{3mk}{2}}c_{mk}} \leq 3^{\frac{mk}{6}}\cdot q^{\frac{3mk}4}.
\]
If $t=k$, then $T=H\cap N$ and $P$ is is isomorphic to a $2$-subgroup of $\GL_m(q)$. Now Lemma~\ref{l: conj in sylow} and the fact that $m\geq t\geq 20$ imply that
\[
 |H:C_H(g)|_{p'}\leq 3^{\frac{n}{6}}\cdot q^{\frac{3n}4}\cdot 3.04^m<q^{\frac{29n}{30}}<q^{n-1}
\]
and we are done.

If $t<k$, then $q=7$ and $k=t-1$. Now observe that $W$ is normal in $H\cap N$ and that $|W:C_W(g)| = |T:C_T(g)|$. Applying Lemma~\ref{L: sylowtwos} we have
\[
 |H\cap N:C_{H\cap N}(g)|_{p'} = |T:C_T(g)|_{p'} \frac{|g^{H\cap N} \cap P'|}{|g^W\cap P'|}.
\]
Now $P'$ is isomorphic to an elementary abelian 2-subgroup of $\GL_m(q)$, thus Lemma~\ref{l: el} implies that $|P'| \leq 2^m$ and we have
\[
 |H\cap N:C_{H\cap N}(g)|_{p'}< \sqrt{q^{\frac{3m(t-1)}{2}}c_{m(t-1)}}\cdot 2^m\]
Since $P$ is isomorphic to a $2$-subgroup of $\GL_m(q)\times\GL_m(q)$, Lemma~\ref{l: conj in sylow} implies that
 \[
  \begin{aligned}
 |H:C_H(g)|_{p'} &<   \sqrt{q^{\frac{3m(t-1)}{2}}c_{m(t-1)}}\cdot 2^m\cdot 3.04^{2m} \\
&\leq 3^{\frac{n-m}{6}}q^{\frac{3n-3m}{4}}\cdot 2^m \cdot 3.04^{2m} \\
&\leq 3^{\frac{n}{6}-\frac{n}{120}}q^{\frac{3n}{4}-\frac{3n}{80}} \cdot 12.16^{\frac{n}{10}} \\
&< q^{0.94n}<q^{n-1}
 \end{aligned}
 \]
and the result follows.

\item{\bf Conjugates of $g_k$ in $P_k$}. 
Suppose, instead, that $P_k$ contains other $T_k$-conjugates of $g_k$ in $P_k$. Then $P$ contains other $T$-conjugates of $g$ in $P$. Now Theorem~\ref{t: alperin} implies that there exists a $T$-conjugate $h\in P_k$ such that $\langle g,h\rangle$ is a Klein $4$-subgroup of $P$.

Let $V$ be the normal subgroup of $T$ that is the pre-image of $T_{k-1}$. Observe that $\langle V, g, h\rangle$ is isomorphic to an odd order subgroup of $\GL_{m(k-1)}(q)$. We apply Lemma~\ref{L: klein} to the group $\langle V, g,h\rangle$ and (by relabelling $g$ and $h$ if necessary) conclude that
\[
 |V:C_V(g)|_{p'} \leq \sqrt{q^{\frac{3m(k-1)}{2}}c_{m(k-1)}} \leq 3^{\frac{m(k-1)}{6}}\cdot q^{\frac{3m(k-1)}4}. 
\]
If $t=k$, then $T=H\cap N$ and Lemma~\ref{L: oddnormal} implies that
\[\begin{aligned}
 |H\cap N:C_{H\cap N}(g)|_{p'} &= |V:C_V(g)|_{p'} |T_k:C_{T_k}(g_k)|_{p'}\\ &\leq 3^{\frac{m(t-1)}{6}}\cdot q^{\frac{3m(t-1)}4} (q^{m-1}+\cdots+q+1).\end{aligned}
\]
Then, since $P$ is isomorphic to a subgroup of $\GL_m(q)$, Lemma~\ref{L: sylowtwos} and \ref{l: conj in sylow} imply that
\[
 \begin{aligned}
|H:C_H(g)|_{p'} &\leq  3^{\frac{m(t-1)}{6}}\cdot q^{\frac{3m(t-1)}4} (q^{m-1}+\cdots+q+1)3.04^m \\
&< 3^{\frac{n}{6}-\frac{n}{120}}q^{\frac{3n}{4}+\frac{n}{80}}3.04^{\frac{n}{20}} \\
&< q^{\frac{39n}{40}} <q^{n-1}
 \end{aligned}
\]
and we are done.

If $t=k-1$, then
\[\begin{aligned}
 |T:C_T(g)|_{p'} &\leq \sqrt{q^{\frac{3m(k-1)}{2}}c_{m(k-1)}}(q^{m-1}+\cdots+q+1) \\ &\leq 3^{\frac{m(t-2)}{6}}\cdot q^{\frac{3m(t-2)}4}(q^{m-1}+\cdots+q+1). \end{aligned}
\]
Now, since $P\cap T$ is isomorphic to a subgroup of $\GL_m(q)$, Lemma~\ref{L: sylowtwos} and \ref{l: conj in sylow} imply that
\[
 |H\cap N:C_{H\cap N}(g)|_{p'} \leq 3^{\frac{m(t-2)}{6}}\cdot q^{\frac{3m(t-2)}4}(q^{m-1}+\cdots+q+1)3.04^m.
\]
Similarly, since $P$ is isomorphic to a subgroup of $\GL_m(q)\times\GL_m(q)$, Lemma~\ref{L: sylowtwos} and \ref{l: conj in sylow} imply that
\[
\begin{aligned}
 |H:C_{H}(g)|_{p'} &\leq 3^{\frac{m(t-2)}{6}}\cdot q^{\frac{3m(t-2)}4}(q^{m-1}+\cdots+q+1)3.04^m3.04^{2m} \\
&\leq 3^{\frac{n}{6}-\frac{n}{60}}q^{\frac{3n}{4}-\frac{3n}{40}}q^{\frac{n}{20}}3.04^{\frac{3n}{20}} \\
&<q^{0.98n}<q^{n-1}
\end{aligned}
 \]
 and the result is proved.
\end{enumerate}

\end{proof}

This completes our proof of Lemma A, and thereby proves Theorem A.

\section{Theorem B and its corollaries}\label{S: quaternionsylowtwos}

Our first task is to prove Theorem B. We begin by recalling the statement of Hypothesis 1.

\begin{hypothesisone}
Suppose that a group $G$ acts transitively upon the points of a non-Desarguesian projective plane $\spaceP$ of order $x>4$. If $G$ contains any involutions then each fixes a Baer subplane; in particular they fix $u^2+u+1$ points where $x=u^2, u>2$. Furthermore in this case any non-trivial element of $G$ fixing at least $u^2$ points fixes either $u^2+u+1, u^2+1$ or $u^2+2$ points of $\spaceP$.
\end{hypothesisone}

In this section we operate under Hypothesis 1, which we recall represents the conditions under which a group may act transitively on the points of a non-Desarguesian projective plane (see p.\pageref{h:basic}).

Our proof of Theorem B will require a background result and a lemma. The background result follows from the comments in \cite[p377]{gorenstein3} and the results in \cite{gw1}.

\begin{proposition}\label{p: quart}
If a group $H$ has generalized quaternion Sylow $2$-subgroups then $H/O(H)$ is isomorphic to one of the following groups:
\begin{enumerate}
\item $S$, i.e. $H$ has a normal $2$-complement.
\item $2.A_7$, a double cover of $A_7$.
\item $\SL_2(q).D$, where $q$ is odd and $D$ is cyclic.
\end{enumerate}
\end{proposition}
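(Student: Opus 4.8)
The plan is to reduce the generalized quaternion case to the already-classified dihedral case by passing to a central quotient, and then to reassemble $\overline{G}$ from the resulting pieces. The single most useful fact throughout is that $O(\overline{G}) = O(G/O(G)) = 1$, since $O(G)$ is by definition the largest odd-order normal subgroup of $G$; this lets the structure theorems, which are usually stated modulo the odd core, apply directly to $\overline{G}$.

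First I would invoke the Brauer--Suzuki theorem, which is the substance of the comments in \cite[p.377]{gorenstein3}: a group with generalized quaternion Sylow $2$-subgroups has a centre of order $2$ modulo its odd core. Applied to $\overline{G}$ (whose odd core is trivial), this produces a central involution $z \in Z(\overline{G})$. Since a generalized quaternion group has a unique involution, $z$ is in fact the unique involution of $\overline{G}$, and $\langle z\rangle$ is the unique minimal normal subgroup of $2$-power order.

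Next I would form $\tilde{G} = \overline{G}/\langle z\rangle$. The quotient of a generalized quaternion group by its centre is dihedral, so $\tilde{G}$ has dihedral Sylow $2$-subgroups. Before classifying $\tilde G$ I would verify $O(\tilde{G}) = 1$: the preimage in $\overline{G}$ of an odd-order normal subgroup of $\tilde{G}$ is a normal subgroup $N$ containing $\langle z\rangle$ as a central subgroup of odd index, so $N = \langle z\rangle \times O(N)$ by Schur--Zassenhaus; then $O(N)$ is characteristic in $N$ and hence normal in $\overline{G}$, so $O(\overline{G}) = 1$ forces $O(N) = 1$. With $O(\tilde{G}) = 1$ established, the Gorenstein--Walter classification of finite groups with dihedral Sylow $2$-subgroups \cite{gw1} applies, and $\tilde{G}$ is one of: a dihedral $2$-group, the group $A_7$, or a group satisfying $PSL_2(q) \leq \tilde{G} \leq P\Gamma L_2(q)$ for some odd $q$.

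Finally I would lift each possibility back through the central extension $1 \to \langle z\rangle \to \overline{G} \to \tilde{G} \to 1$. If $\tilde{G}$ is a $2$-group then $\overline{G} = S$, giving case (1). If $\tilde{G} \cong A_7$ then $\overline{G}$ is a central extension of $A_7$ by $C_2$; the split extension $C_2 \times A_7$ has Sylow $2$-subgroup $C_2 \times D_8$ of $2$-rank $2$, which is excluded by the generalized quaternion hypothesis (equivalently by $m_2(\overline{G}) = 1$ from Theorem A), so $\overline{G} \cong 2.A_7$, giving case (2). If $PSL_2(q) \leq \tilde{G} \leq P\Gamma L_2(q)$, the preimage of $PSL_2(q)$ is forced to be its non-split double cover $SL_2(q)$ (again by the $2$-rank-one condition), so $SL_2(q) \trianglelefteq \overline{G}$ and $\overline{G} = SL_2(q).D$ with $D \cong \tilde{G}/PSL_2(q) \leq \mathrm{Out}(PSL_2(q)) \cong C_2 \times C_f$, where $q = p^f$. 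I expect the main obstacle to be the very last point, namely showing $D$ is \emph{cyclic}: a subgroup of $C_2 \times C_f$ with noncyclic Sylow $2$-subgroup would contribute a second independent involution (combining the diagonal automorphism with a field automorphism of even order) to a Sylow $2$-subgroup of $\overline{G}$, contradicting $m_2(\overline{G}) = 1$. Tracking exactly how the $2$-part of $D$ sits inside the Sylow $2$-subgroup $S$ and confirming that noncyclicity destroys the generalized quaternion structure is the delicate step, and it is precisely here that the detailed information in \cite{gw1} and \cite[p.377]{gorenstein3} is needed; once this is done, $D$ is cyclic and case (3) follows.
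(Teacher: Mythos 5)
Your proposal is correct and follows essentially the same route the paper takes, since the paper proves this proposition purely by citing the Brauer--Suzuki theorem (the comments on p.~377 of \cite{gorenstein3}) together with the Gorenstein--Walter classification \cite{gw1} applied to $\overline{G}/\langle z\rangle$; your write-up in fact supplies more detail than the paper's citation-level argument. The single step you defer to the references---cyclicity of $D$---is likewise only cited in the paper, and it closes quickly: the Sylow $2$-subgroup of $D$ is the quotient of the dihedral group $\tilde{S}$ by $\tilde{S}\cap PSL_2(q)$, which is a noncyclic (dihedral or Klein four) normal subgroup, and every quotient of a dihedral $2$-group by a noncyclic normal subgroup is cyclic, so $D$, being abelian with cyclic odd part, is cyclic.
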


\begin{lemmab}
Suppose that $G$ has generalized quaternion Sylow $2$-subgroups and is insoluble. Then $G$ has a subgroup of index at most $2$ equal to $O(G)\rtimes K$ where $K$ is a group isomorphic to $\SL_2(5)$. Furthermore if $N_t$ is a Sylow $t$-subgroup of $F(G)$, for some prime $t$, then one of the following holds:
\begin{enumerate}
 \item[(i)] $t$ divides $u^2+u+1$;
 \item[(ii)] $t$ divides $u^2-u+1$, $N_G(N_t)$ contains a subgroup $H$ such that $H\cong \SL_2(5)$, $H$ fixes a point  of $\spaceP$, $N_t\rtimes H$ is a Frobenius group, and $N_t$ is abelian.
\end{enumerate}
Furthemore there exists a prime $t$ dividing both $|F(G)|$ and $u^2-u+1$.
\end{lemmab}

We defer the proof of Lemma B until the next subsection; first we demonstrate that Lemma B implies Theorem B. We need a little notation which will hold throughout this section: for a group $H\leq G$, define $\overline{H}=H/(H\cap O(G))$. 

\begin{proof}[Proof that Lemma B implies Theorem B]
Let $S$ be a Sylow $2$-subgroup of $G$; Theorem A implies that $S$ is cyclic or generalized quaternion. Note that, since $S$ contains exactly one involution, the Frattini argument implies that $G=O(G)C_G(g)$ for any involution $g\in G$.

If $S$ is cyclic then \cite[39.2)]{aschbacher3} implies that $\overline{G}\cong S$, i.e. $G$ has a {\it normal $2$-complement} and Theorem B holds.

If $S$ is generalized quaternion then Proposition~\ref{p: quart} gives the structure of $\overline{G}$. Clearly if (1) holds, then Theorem B holds. Similarly if (3) holds with $q=3$, then Theorem B holds. Thus, to prove Theorem B we must deal with the remaining cases. Equivalently it is sufficient to assume that $G$ has generalized quaternion Sylow $2$-subgroups and is insoluble. Now Lemma B applies and Theorem B follows immediately.
\end{proof}

\subsection{A proof of Lemma B}

Throughout this subsection we operate under the supposition of Lemma B, as well as under Hypothesis~\ref{h:basic}. In particular this means that $F^*(\overline{G})$ is a quasisimple group with centre of order $2$. We start with a background result.

\begin{lemma}\label{l: asch}\cite[(5.21)]{aschbacher3}
Let a group $J$ be transitive on a set $X$, $x\in X$, $H$ the stabilizer in $J$ of $x$, and $K\leq H$. Then $N_J(K)$ is transitive on $\Fix(K)$ if and only if $K^J\cap H=K^H$.
\end{lemma}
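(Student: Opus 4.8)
The plan is to push everything through the coset action: since $J$ is transitive on $X$ with point stabiliser $H=J_x$, the action on $X$ is permutation-isomorphic to the action of $J$ on the right cosets $H\backslash J$, with $x$ corresponding to the trivial coset $H$. The first step is therefore to record a clean description of $\Fix(K)$ in these terms. A coset $Hg$ is fixed by $K$ in the right-multiplication action exactly when $Hgk=Hg$ for every $k\in K$, i.e. when $gkg^{-1}\in H$ for all $k$, so
$$\Fix(K)=\{Hg : gKg^{-1}\leq H\}.$$
Note that $H$ itself lies in $\Fix(K)$ precisely because $K\leq H$, so $x$ is a fixed point and $N_J(K)$ genuinely acts on $\Fix(K)$.

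Next I would identify the $N_J(K)$-orbit of $x$. As $N_J(K)$ acts by right multiplication, the orbit of the coset $H$ is $\{Hn : n\in N_J(K)\}$, and one checks the automatic inclusion $H\,N_J(K)\subseteq\Fix(K)$ (if $g=hn$ with $h\in H$, $n\in N_J(K)$ then $gKg^{-1}=hKh^{-1}\leq H$ since $K\leq H$). Hence transitivity of $N_J(K)$ on $\Fix(K)$ is equivalent to the factorisation statement: \emph{every} $g\in J$ with $gKg^{-1}\leq H$ can be written $g=hn$ with $h\in H$ and $n\in N_J(K)$.

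The heart of the argument is to match this factorisation condition with $K^J\cap H=K^H$. One inclusion is free: $K^H\subseteq K^J\cap H$ always holds because $K\leq H$ forces every $H$-conjugate of $K$ to lie in $H$, so the real content is the reverse. For the forward direction, given $g$ with $gKg^{-1}=K^{g^{-1}}\leq H$, the subgroup $gKg^{-1}$ is a $J$-conjugate of $K$ lying in $H$; if $K^J\cap H=K^H$ then $gKg^{-1}=h^{-1}Kh$ for some $h\in H$, which rearranges to $(hg)K(hg)^{-1}=K$, so $hg\in N_J(K)$ and $g\in H\,N_J(K)$, giving transitivity. Conversely, a typical element of $K^J\cap H$ has the form $K^j=j^{-1}Kj\leq H$; setting $g=j^{-1}$ puts $Hg$ in $\Fix(K)$, so transitivity gives $g=hn$ with $h\in H$, $n\in N_J(K)$, whence $j=n^{-1}h^{-1}$ and $K^j=hKh^{-1}=K^{h^{-1}}\in K^H$.

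I do not expect a genuine obstacle here; the result is a formal consequence of the coset description. The only thing requiring care is bookkeeping with the conjugation convention, so that the fixed-point condition $gKg^{-1}\leq H$ lines up correctly with membership $j^{-1}Kj\in K^J\cap H$ under the substitution $g=j^{-1}$, and so that the trivially-available inclusion is $K^H\subseteq K^J\cap H$ — which is exactly what makes the equality (rather than a one-sided containment) the natural hypothesis.
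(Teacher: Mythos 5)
Your argument is correct: the coset translation $\Fix(K)=\{Hg : gKg^{-1}\leq H\}$, the observation that $H\,N_J(K)$ is automatically contained in this set, and the two-way matching of the factorisation $\{g : gKg^{-1}\leq H\}=H\,N_J(K)$ with the fusion condition $K^J\cap H=K^H$ (including the free inclusion $K^H\subseteq K^J\cap H$) all check out, with the conjugation bookkeeping handled correctly. The paper itself gives no proof of this lemma --- it is quoted directly from Aschbacher, (5.21) --- and your self-contained argument is essentially the standard one found there, so there is nothing to compare beyond noting that you have supplied the proof the paper omits.
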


\begin{lemma}
For $g$ an involution in $G$, $C_G(g)$ acts transitively on the set of points of $\Fix(g)$, a Baer subplane of $\spaceP$.
\end{lemma}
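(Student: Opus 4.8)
The plan is to invoke Lemma \ref{l: asch} with $J=G$ acting on the points of $\spaceP$ and $K=\langle g\rangle$. First I would choose a point $\alpha$ fixed by $g$; such a point exists because, under Hypothesis \ref{h:basic}, $g$ fixes a Baer subplane $\Fix(g)$ of $u^2+u+1$ points. Then $g\in G_\alpha$, so I may take $H=G_\alpha$ and $K=\langle g\rangle\leq H$. Since $g$ is an involution, $\langle g\rangle$ has no non-trivial automorphism, so $N_G(\langle g\rangle)=C_G(g)$, and clearly $\Fix(\langle g\rangle)=\Fix(g)$. Thus Lemma \ref{l: asch} tells us that $C_G(g)$ is transitive on $\Fix(g)$ precisely when
$$\langle g\rangle^G\cap G_\alpha=\langle g\rangle^{G_\alpha};$$
and since conjugating the subgroup $\langle g\rangle$ is the same as conjugating its unique involution $g$, this is the fusion condition $g^G\cap G_\alpha=g^{G_\alpha}$.

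So the real work is to verify this fusion statement, and here the key input is Theorem A. By the standing assumption of this section the Sylow $2$-subgroups of $G$ are generalized quaternion, and hence contain a unique involution. Consequently $G$ has a single conjugacy class of involutions: any involution lies in some Sylow $2$-subgroup, is forced to be the unique involution of that subgroup, and all Sylow $2$-subgroups are $G$-conjugate. Hence $g^G$ is exactly the set of all involutions of $G$, and so $g^G\cap G_\alpha$ is exactly the set of all involutions of $G_\alpha$.

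Next I would run the same argument inside $G_\alpha$. A Sylow $2$-subgroup of $G_\alpha$ is contained in a Sylow $2$-subgroup of $G$, hence also contains a unique involution, so $G_\alpha$ likewise has a single class of involutions, and $g^{G_\alpha}$ is again the full set of involutions of $G_\alpha$. Therefore $g^G\cap G_\alpha=g^{G_\alpha}$, the hypothesis of Lemma \ref{l: asch} is satisfied, and $C_G(g)$ is transitive on $\Fix(g)$ as required.

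The only points demanding care are the bookkeeping ones: choosing $\alpha\in\Fix(g)$ so that $g\in G_\alpha$, and the identification $N_G(\langle g\rangle)=C_G(g)$. The mathematical crux is the \emph{unique involution} property supplied by Theorem A; once that is in hand the fusion condition is immediate, so I do not expect a genuine obstacle in this argument.
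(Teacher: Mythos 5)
Your proof is correct and takes essentially the same route as the paper: both apply Lemma \ref{l: asch} with $J=G$, $H=G_\alpha$, $K=\langle g\rangle$, and verify the fusion condition $K^G\cap G_\alpha=K^{G_\alpha}$ via the fact that, by Theorem A, Sylow $2$-subgroups are cyclic or generalized quaternion and hence contain a unique involution. The paper compresses this to one line, whereas you spell out the single-conjugacy-class argument in both $G$ and $G_\alpha$, but the content is identical and there is no gap.
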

\begin{proof}
This is immediate from Lemma \ref{l: asch}. Take $G$ to be the group $J$, $X$ to be the set of points of $\spaceP$; thus $H=\Galph,$ the stabilizer of a point $\alpha$. Define $K$ to be a subgroup of $\Galph$ of order $2$, say $K=\{1,g\}$. Now $g$ is the unique involution in a particular Sylow $2$-subgroup of $\Galph$ and so $K^G\cap \Galph=K^{\Galph}$. Hence $N_G(K)=C_G(g)$ is transitive on $\Fix(K)=\Fix(g)$, a Baer subplane of $\spaceP$. 
\end{proof}

For a fixed involution $g\in G$, define the subgroup
$$T_g=\{h\in C_G(g) | \Fix(h)=\Fix(g)\}.$$ 
Thus $T_g$ is the kernel of the action of $C_G(g)$ on $\Fix(g)$. 


\begin{lemma}\label{l: T}
For $g$ an involution in $G$, $\overline{T_g}$ contains a normal subgroup isomorphic to $F^*(\overline{G})$.
\end{lemma}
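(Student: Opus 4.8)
The plan is to show that $F^*(\overline{G})\leq\overline{T_g}$; since both $\overline{T_g}$ and $F^*(\overline{G})$ are normal in $\overline{G}$, the subgroup $F^*(\overline{G})$ is then the required normal subgroup of $\overline{T_g}$. First I would record the structural reduction. By the remark following Proposition~\ref{p: quart} we have $G=O(G)C_G(g)$, so the restriction of $G\to\overline{G}$ to $C_G(g)$ is surjective and identifies $\overline{C_G(g)}$ with $\overline{G}$; under this identification $\overline{T_g}$ is the image of the normal subgroup $T_g\lhd C_G(g)$, hence $\overline{T_g}\lhd\overline{G}$. Write $L=F^*(\overline{G})$, a quasi-simple group with $Z(L)=\langle z\rangle$ of order $2$. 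Because the Sylow $2$-subgroups of $\overline{G}$ are generalized quaternion (they map isomorphically from those of $G$, as $O(G)$ has odd order), $z$ is the unique involution of $\overline{G}$; in particular the image of $g$ is $z$. Since $g\in T_g$ we get $z\in\overline{T_g}$. Now $L\cap\overline{T_g}\lhd L$ contains $z$, so by quasi-simplicity it equals $L$ or $\langle z\rangle$. In the first case we are done, so the whole problem reduces to excluding $L\cap\overline{T_g}=\langle z\rangle$.

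So suppose $L\cap\overline{T_g}=\langle z\rangle$. Put $X=C_G(g)/T_g$; by the preceding lemma $X$ acts faithfully and transitively on the $u^2+u+1$ points of the Baer subplane $\mathcal{B}=\Fix(g)$. Writing $\pi:G\to\overline{G}$ and $K=C_G(g)\cap\pi^{-1}(L)$, one has $\pi(K)=L$, and the composite $K\to L\to L/\langle z\rangle=\overline{L}$ is onto with kernel $(K\cap O(G))\langle g\rangle$ of order $2\cdot(\text{odd})$. Under the standing assumption $K\cap T_g$ lies in this kernel and also has order $2\cdot(\text{odd})$ (its image in $L$ is exactly $\langle z\rangle$). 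Hence $\overline{L}$ is realized as $\bar K/N_0$, where $\bar K=KT_g/T_g\leq X$ and $N_0\lhd\bar K$ has odd order. Since $\overline{L}$ (equal to $A_7$ or to $PSL_2(q)$ with $q$ odd, $q>3$) contains a Klein four-group and $|N_0|$ is odd, Schur--Zassenhaus lifts that four-group to a four-group inside $\bar K\leq X$. Thus $m_2(X)\geq 2$.

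If $\mathcal{B}$ is non-Desarguesian this is immediately fatal: Theorem~A applied to the point-transitive action of $X$ on $\mathcal{B}$ gives $m_2(X)\leq 1$, a contradiction, so $L\cap\overline{T_g}=\langle z\rangle$ is impossible and $L\leq\overline{T_g}$, as required.

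The main obstacle is the case in which $\mathcal{B}$ is Desarguesian, i.e. $\mathcal{B}\cong PG(2,u)$, where Theorem~A does not apply and one instead has $X\leq P\Gamma L_3(u)$ point-transitive with nonabelian simple section $\overline{L}$. Here I would combine the arithmetic of Lemma~\ref{L:firstly} (every prime dividing $u^2+u+1$ is $3$ or is $\equiv 1\pmod 3$) with the known description of point-transitive collineation groups of a Desarguesian plane. The flavour is clearest for $\overline{L}=A_7$: the transitive permutation degrees of $A_7$ that can equal $u^2+u+1$ with $u\geq 3$ force $u=4$ and degree $21$, but $\mathrm{Aut}(PG(2,4))=P\Gamma L_3(4)$ does not involve $A_7$ (since $A_7\not\leq PSL_3(4)$ on order grounds and $\mathrm{Out}(PSL_3(4))$ is soluble), a contradiction. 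The subcase $\overline{L}=PSL_2(q)$ is the genuinely delicate one, and is where the structural input on transitive subgroups of $P\Gamma L_3(u)$ (either containing $PSL_3(u)$, or normalizing a Singer cycle and hence soluble) must be invoked to finish; controlling this uniformly in $q$ is the crux of the argument.
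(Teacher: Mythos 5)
Your reduction and your non-Desarguesian branch are sound: the analysis of $L\cap\overline{T_g}$ (with $L=F^*(\overline{G})$ quasi-simple and $z$ the unique involution of $\overline{G}$), the realization of $\overline{L}$ as $\bar K/N_0$ with $N_0$ of odd order, and the Schur--Zassenhaus lift of a Klein four-group giving $m_2(X)\geq 2$, contradicting Theorem A, are all correct; this is essentially the paper's own contradiction reached by a slightly different mechanism (the paper instead notes that the Sylow $2$-subgroups of $X=C_G(g)/T_g$ are cyclic or dihedral and kills the dihedral case with Theorem A). But the Desarguesian case contains a genuine gap, and you have flagged it yourself: the subcase $\overline{L}=PSL_2(q)$ is not proved, only deferred to unspecified ``structural input''. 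Worse, even granting the dichotomy you gesture at (a point-transitive subgroup of $P\Gamma L_3(u)$ either has socle $PSL_3(u)$ or is soluble), your argument does not close: the soluble branch is fine (a soluble $X$ has no nonabelian simple section), but in the branch $PSL_3(u)\leq X$ your only weapon, $m_2(X)\geq 2$, yields nothing, since Theorem A does not apply to the Desarguesian subplane and dihedral $2$-groups already have $2$-rank $2$. So ``controlling this uniformly in $q$'' is not a loose end but the missing half of the proof.

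What fills the hole --- and what the paper actually does --- is a structural fact about $X$ that you never extract: since the Sylow $2$-subgroups of $C_G(g)$ are generalized quaternion with $g$ as their unique (central) involution, and $g\in T_g$, the Sylow $2$-subgroups of $X$ are \emph{proper} quotients of generalized quaternion groups, hence cyclic or dihedral. For $u>2$ the Sylow $2$-subgroups of $PSL_3(u)$ are neither (they are not sections of a quaternion group), which eliminates the socle-$PSL_3(u)$ branch in one line, uniformly in $q$ and with no case division over $\overline{L}$; solubility of $X$ then follows in all cases, and the paper concludes directly from perfectness of $F^*(\overline{G})$ (its image in the soluble quotient $\overline{G}/\overline{T_g}$ is trivial), with no need for the $L\cap\overline{T_g}$ dichotomy at all. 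Note also that your $A_7$ discussion conflates ``$X$ has $A_7$ as a section'' with ``$A_7$ itself acts transitively on the $u^2+u+1$ points''; that step is repairable (pass the simple section into the socle via the same dichotomy), but as written it is not a valid deduction.
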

\begin{proof}
Suppose first that $\Fix(g)$ is Desarguesian. There are two possibilities for the structure of $C_G(g)/T_g$: either $C_G(g)/T_g$ is soluble or $C_G(g)/T_g$ has socle $\PSL_3(u)$ where $u>2$ is the order of $\Fix(g)$. Now, since $u>2$, $\PSL_3(u)$ has Sylow $2$-subgroups that are neither cyclic nor dihedral and so they cannot form a section of a quaternion group. Hence we conclude that $C_G(g)/T_g$ is soluble.

Now suppose that $\Fix(g)$ is not Desarguesian. Recall that the quotient of a quaternion group $Q$ by a normal subgroup containing the unique central involution of $Q$ must be either dihedral or cyclic. We conclude that $C_G(g)/T_g$ must have dihedral or cyclic Sylow $2$-subgroups. The former is not possible by Theorem A and the latter implies that $C_G(g)/T_g$ is soluble. Hence this conclusion holds in all cases.

Since $G=O(G)C_G(g)$, we know that $\overline{C_G(g)}$ must contain a normal subgroup isomorphic to $F^*(\overline{G})$. Since $C_G(g)/T_g$ is soluble, we conclude that $\overline{T_g}$ must contain a normal subgroup isomorphic to $F^*(\overline{G})$.
\end{proof}

Now we will need a background result which is originally due to Zassenhaus \cite{zassenhaus}; it is discussed fully in \cite{passman}.

\begin{theorem}\label{t: frobenius}
Let $F$ be an insoluble Frobenius complement. Then $F$ has a normal subgroup $F_0$ of index at most $2$ such that $F_0=SL_2(5)\times M$ with $M$ a group of order prime to $2,3$ and $5$.
\end{theorem}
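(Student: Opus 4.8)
The plan is to exploit the two structural features that every Frobenius complement enjoys: each of its Sylow subgroups is cyclic or, for the prime $2$, generalized quaternion, and it contains no non-cyclic subgroup of order $pq$ for distinct primes $p,q$ (this being the hallmark of a fixed-point-free action). First I would dispose of the cyclic case: if the Sylow $2$-subgroup of $F$ were cyclic then, all Sylow subgroups being cyclic, $F$ would be a $Z$-group, hence metacyclic and soluble, contrary to hypothesis. So the Sylow $2$-subgroup of $F$ is generalized quaternion.

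Next I would locate a quasisimple piece. Since $F$ is insoluble its layer $E(F)$ is non-trivial, so $F$ has a component $L$; being subnormal, $L$ inherits the property that all its Sylow subgroups are cyclic or generalized quaternion, and $L/Z(L)$ is non-abelian simple. The Sylow $2$-subgroup of $L$ is a section of a generalized quaternion group, hence generalized quaternion, and its image in $L/Z(L)$ is therefore dihedral (the quaternion possibility being excluded by Brauer--Suzuki, since no simple group has generalized quaternion Sylow $2$-subgroups). By the Gorenstein--Walter classification of simple groups with dihedral Sylow $2$-subgroups the only candidates for $L/Z(L)$ are $PSL_2(q)$ with $q$ odd and $A_7$. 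Imposing cyclicity of the odd Sylow subgroups of $L$ eliminates $A_7$ (whose Sylow $3$-subgroup is $C_3\times C_3$) and forces $q=p$ prime in $PSL_2(p)$, since the natural Sylow $p$-subgroup is cyclic only when it has order $p$; the generalized quaternion, rather than dihedral, Sylow $2$-subgroup then identifies the quasisimple cover as $L\cong SL_2(p)$.

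The crucial and hardest step is to show $p=5$. Here the $pq$-condition alone is \emph{not} sufficient: it is satisfied by various larger groups with periodic cohomology, for instance the binary octahedral group $2.S_4$, which is nevertheless not a Frobenius complement. To exclude these I would invoke the genuine Frobenius-complement input of Zassenhaus \cite{zassenhaus}, as presented in \cite{passman}: a Frobenius complement embeds in a division algebra, and Amitsur's classification of the finite subgroups of division rings then leaves $SL_2(5)$ as the only insoluble possibility for $L$. I expect this to be the main obstacle precisely because it cannot be extracted from the elementary Sylow and $pq$-conditions and requires the finer division-ring theory rather than a purely local argument.

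Finally I would assemble the global structure. Two distinct components would each contribute a subgroup $C_5$ and, commuting, would produce $C_5\times C_5$, contradicting cyclicity of the Sylow $5$-subgroup; hence $L\cong SL_2(5)$ is the unique component and is normal in $F$. Writing $C=C_F(L)$, conjugation gives $F/C\hookrightarrow \mathrm{Aut}(SL_2(5))\cong S_5$ with $LC/C\cong A_5=\mathrm{Inn}(L)$, so that $|F:LC|\le 2$. Since the unique involution of $L$ is $-I$, any $2$-element of $C$ would, together with the quaternion Sylow $2$-subgroup $Q_8\le L$ that it centralizes, create a second involution in a generalized quaternion group; hence the Sylow $2$-subgroup of $C$ is exactly $Z(L)=\langle -I\rangle$, and in particular $-I\in Z(C)$. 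By Burnside $C=M\times\langle -I\rangle$ with $M$ of odd order, and the $pq$-condition together with the $C_3,C_5\le L$ already present forces $|M|$ coprime to $2,3,5$. Then $L\cap M=1$ and $[L,M]=1$ give $F_0:=LM=SL_2(5)\times M$, a normal subgroup of $F$ with $|F:F_0|=|F:LC|\le 2$, as required.
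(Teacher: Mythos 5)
First, a point of comparison: the paper does not prove this statement at all --- it quotes it as a background theorem of Zassenhaus, citing \cite{zassenhaus} and \cite{passman} --- so your proposal is measured against the literature rather than against an in-paper argument. Your overall architecture is the standard modern reconstruction and most of it checks out: cyclic Sylow $2$ forces a $Z$-group, hence solubility; a component $L$ must have generalized quaternion Sylow $2$-subgroups; Brauer--Suzuki plus Gorenstein--Walter, cyclicity of the odd Sylow subgroups, uniqueness of the component, and the final assembly via $C=C_F(L)$, the Sylow $2$-subgroup of $C$ being $\langle -I\rangle$, Burnside's normal $2$-complement, and $|F:LM|\leq 2$ through $\mathrm{Aut}(SL_2(5))$ are all correct. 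One small repair: ``$F$ insoluble implies $E(F)\neq 1$'' is false for general groups (e.g.\ $2^3:GL_3(2)$); here you must add that if $E(F)=1$ then $F^*(F)=F(F)$ is nilpotent with cyclic or quaternion Sylow subgroups, whose automorphism group is soluble, so $C_F(F^*(F))\leq F^*(F)$ would force $F$ soluble.

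The genuine gap is at exactly the step you identify as the crux. The claim ``a Frobenius complement embeds in a division algebra'' is false, so you cannot invoke Amitsur's classification to get $p=5$. By Herstein's theorem, a finite subgroup of odd order in a division ring is cyclic, yet nonabelian odd-order Frobenius complements exist (e.g.\ $C_7\rtimes C_9$ with the generator acting as an automorphism of order $3$); thus the class of finite subgroups of division rings is strictly smaller than the class of Frobenius complements, and producing a division-ring embedding for a hypothetical fixed-point-free $SL_2(p)$ is essentially as hard as the theorem itself. Your illustrative example is also wrong: the binary octahedral group $2.S_4$ is a subgroup of the unit quaternions and acts freely on $S^3$, so it \emph{is} a Frobenius complement (it appears on Zassenhaus's list); a correct illustration of the insufficiency of the local conditions is $SL_2(17)$, whose Sylow subgroups are $Q_{32}$, $C_9$, $C_{17}$ and in which every subgroup of order $pq$ is cyclic (the unique involution kills $S_3$, and there is no subgroup of order $51$), yet which is not a Frobenius complement. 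Note, in fact, that the $pq$-condition \emph{does} eliminate $SL_2(p)$ whenever $p-1$ has an odd prime divisor $r$, since the Borel subgroup contains a nonabelian group $p{:}r$ of order $pr$; the genuinely hard residue is the Fermat primes. There the argument of Zassenhaus, as presented in \cite{passman}, is character-theoretic rather than arithmetic: a fixed-point-free complex representation of $SL_2(p)$ restricts to a fixed-point-free representation of the Frobenius group $p{:}(p-1)$, and comparing the resulting degree and multiplicity constraints with the known character degrees $1$, $p$, $p\pm 1$, $\tfrac{p\pm 1}{2}$ of $SL_2(p)$ rules out all $p>5$. Replacing your division-ring sentence with that argument (or an honest citation of it) closes the gap; as written, the proof fails at its central step.
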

We are now in a position to prove Lemma B and hence Theorem B:

\begin{proof}[Proof of Lemma B]
Lemma~\ref{L:firstly} implies that any prime dividing $|F(G)|$ divides 
\[u^4+u^2+1 = (u^2+u+1)(u^2-u+1).\] 
Hypothesis \ref{h:basic}, Proposition \ref{prop:implication} and Lemma A together imply that there is a prime $t$ dividing $|F(G)|$ that does not divide $u^2+u+1$. We conclude that $t$ divides $u^2-u+1$ and we write $N_t$ for the Sylow $t$-subgroup of $F(G)$.

Let $g$ be an involution in $G$. Since $N_t$ acts semi-regularly on the points of $\spaceP$ (see Lemma \ref{L:firstly}), we know that $T_g\cap N_t$ is trival; we also know that $T_g$ acts (by conjugation) on $N_t$.

Suppose that $T_g$ does not act semi-regularly on $N_t$. This implies that there exists $h\in T_g, h\neq 1$ such that $C_{N_t}(h)$ is non-trivial. Now $C_{N_t}(h)$ acts on the fixed set of $h$ and, since $h\in T_g$, we know that the fixed set of $h$ is a Baer subplane. We know, furthermore, that $N_t$ acts semi-regularly on the points of $\spaceP$. This implies that $t$ divides $u^2+u+1$ which is a contradiction. 

We conclude that $T_g$ acts semi-regularly on $N_t$, i.e. that $N_t\rtimes T_g$ is a Frobenius group with Frobenius complement $T_g$ and Frobenius kernel $N_t$.

We can now apply Theorem \ref{t: frobenius}; thus $T_g$ has a subgroup $T_{g,0}$ of index at most $2$ such that $T_{g,0}=H\times M$ where $H\cong SL_2(5)$ and $(|M|,60)=1$. Note that $H$ is the unique normal quasisimple subgroup of $T_g$. Observe that $N_t\rtimes H$ is a subgroup of $G$ that is also a Frobenius group and so, in particular, $N_t$ is abelian \cite[Theorem 6.3]{isaacs}. What is more $N_t\leq T_g$, thus $N_t$ fixes a point of $\spaceP$.

Now Lemma \ref{l: T} implies that $F^*(\overline{G})\cong SL_2(5)$; Proposition \ref{p: quart} implies that $\overline{G}$ is a cyclic extension of $\SL_2(5)$. Since $\rm{Aut}(SL_2(5)) = \PSL_2(5).2$, this extension must have size at most $2$; thus $\overline{G} \cong SL_2(5)$ or $\SL_2(5).2$.

Finally, since $T_g$ has a subgroup isomorphic to $\SL_2(5)$ it is clear that $G$ contains a split extension, $O(G):SL_2(5),$ and the result follows.
\end{proof}

\subsection{Corollary~\ref{c: odd order}}


\begin{proof}[Proof of Corollary~\ref{c: odd order}]
As we pointed out in the introduction, the statement is true for Desarguesian projective planes. Assume that $\spaceP$ is non-Desarguesian. Write $v$ for the number of points in $\spaceP$ and let $H=O^2(G)$, the largest normal subgroup of $G$ such that $|G/H|$ is a (possibly trivial) power of $2$. Since $v$ is odd, it is clear that $H$ acts transitively on the set of points of $\spaceP$.

Now apply Theorem B. If (1) applies, then $H=O(G)$, an odd-order group and we are done. If (2) applies, then $H=O(H).SL_2(3)$ and $O(H)$ lies inside an odd-order subgroup $M$ of index $8$ in $H$; then $M$ acts transitively on the set of points of $\spaceP$ and we are done. Finally if (3) applies, then Theorem B implies that $H=O(H)\rtimes K$ where $K\cong SL_2(5)$ and, moreover, $K$ fixes a point of $\spaceP$. Thus $O(H)$ acts transitively on the set of points of $\spaceP$ and we are done.
\end{proof}

\def\cprime{$'$}
\providecommand{\bysame}{\leavevmode\hbox to3em{\hrulefill}\thinspace}
\providecommand{\MR}{\relax\ifhmode\unskip\space\fi MR }
\providecommand{\MRhref}[2]{%
  \href{http://www.ams.org/mathscinet-getitem?mr=#1}{#2}
}
\providecommand{\href}[2]{#2}

\end{document}